\documentclass[final]{elsarticle}
\usepackage{amsmath,graphicx,latexsym,amssymb, amscd, psfrag,verbatim, amsfonts,amsthm}
\usepackage{amsfonts,bm,mathrsfs,subfigure,mathtools}
\usepackage{algorithm,amsxtra,color}
\usepackage[algo2e,ruled,vlined]{algorithm2e}
\usepackage{graphicx}
\usepackage{setspace}
\usepackage{tabularx}
\usepackage{enumerate}
\usepackage{tikz}
\usepackage{pgfplots}
\usepackage{empheq}
\usepackage[makeroom]{cancel}
\pgfplotsset{compat=1.11}
\usepackage{float}
\usepackage{booktabs}
\usepackage{algorithmicx,algpseudocode}
\usepackage[top=1in, bottom=1in, left=0.7in, right=1.25in]{geometry}
\usepackage{multirow}
\usepackage[pdftex, unicode=true,
colorlinks,
linkcolor=red,
anchorcolor=blue,
citecolor=green
]{hyperref}

\newtheorem{thm}{Theorem}[section]
\newtheorem{lem}[thm]{Lemma}
\newtheorem{cor}[thm]{Corollary}

\newtheorem{remark}[thm]{Remark}

\newcommand{\lb}{\left (}
\newcommand{\rb}{\right )}

\newcommand{\bff}{\bm f}

\newcommand{\bw}{\bm w}

\newcommand{\bx}{\bm x}

\newcommand{\bX}{\bm X}

\newcommand{\bu}{\bm u}

\newcommand{\bv}{\bm v}

\newcommand{\bxi}{\bm \xi}

\newcommand{\totalnorm}[1]{{\left\vert\kern-0.25ex\left\vert\kern-0.25ex\left\vert #1
		\right\vert\kern-0.25ex\right\vert\kern-0.25ex\right\vert}}

\begin{document}
	\begin{frontmatter}
		\title{
		%Fully discrete parameter-robust error estimates of a grad-div stabilized Crank-Nicolson artificial compressibility method 
		Fully discrete parameter-robust error analysis of a grad-div stabilized Crank-Nicolson artificial compressibility method for the Navier-Stokes equations}
		\author[ouc]{Feiyu Chen}
		\ead{feiyuchen@stu.ouc.edu.cn}
		\author[usc]{Lili Ju}
		\ead{ju@math.sc.edu}
		\author[ouc,lmm]{Rihui Lan\corref{cor}}
\ead{lanrihui@ouc.edu.cn}
		\author[ouc,lmm]{Shusen Xie}
\ead{shusenxie@ouc.edu.cn}
\address[ouc]{School of Mathematical Sciences, Ocean University of China, Qingdao, Shandong 266100, China}
\address[lmm]{Laboratory of Marine Mathematics, Ocean University of China, Qingdao, Shandong 266100, China}
\address[usc]{Department of Mathematics, University of South Carolina, Columbia, SC 29208, USA}
\cortext[cor]{Corresponding author}
		
		\begin{abstract}
			The artificial compressibility method (ACM) relaxes the incompressibility constraint in the Navier-Stokes equations by introducing a perturbation term proportional to the time derivative of the pressure, scaled by a small positive parameter $\varepsilon$. Consequently, the ACM system inherently involves two small parameters: the fluid viscosity $\nu$ and the artificial compressibility parameter $\varepsilon$. While existing temporal analyses of ACM provide insights into its behavior, rigorous fully discrete error estimates that are robust with respect to both parameters remain a significant gap in the literature. In this paper, we propose a second-order Crank-Nicolson fully discrete ACM scheme and establish its parameter-robust optimal error estimates. To enhance stability and ensure robustness, we incorporate two distinct grad-div stabilization strategies: one facilitates the decoupling of velocity and pressure computations, while the other guarantees robustness with respect to both $\nu$ and $\varepsilon$. For spatial discretization, we employ the Scott-Vogelius finite element pair, which is crucial for the decoupling and error analysis. The resulting parameter-uniform bounds are crucial for ensuring the long-time accuracy of the scheme, circumventing the exponential dependence on the Reynolds number typically introduced by Gr\"onwall's lemma. Numerical experiments are provided to validate the theoretical findings and demonstrate the efficiency of the proposed methods.
		\end{abstract}
				\begin{keyword}
			incompressible Navier-Stokes equations; artificial compressibility method; parameters robust; Scott-Vogelius element; grad-div stabilization.
		\end{keyword}
	\end{frontmatter}
	
	\section{Introduction}
In this paper,  we consider  the following non-stationary incompressible Navier-Stokes (NS) equations  in the primitive variables:
\begin{equation}\label{NS:orig}
	\left\{
	\begin{aligned}
		\bu_t+\lb\bu\cdot\nabla\rb\bu-\nu\Delta \bu + \nabla p&=\bm f, &&\text{ in } (0, T]\times\Omega, \\
		\nabla\cdot\bu&=0, &&\text{ in }(0, T]\times\Omega,
	\end{aligned}
	\right.
\end{equation}
subject to the Dirichlet boundary condition  $\bu(t,\bx)  = \bu_D(\bx, t)$ on $(0, T]\times\partial\Omega$ and 
the initial condition $\bu(0,\bx)  =\bu_0(\bx)$  in $\Omega,$
where $\Omega$ denotes an open bounded and connected domain in ${\mathbb R}^d$ ($d=2 \text{ or }3$), $T>0$ is the terminal time, $\bu=\bu(t,\bx)$ is the fluid velocity, $p=p(t,\bx)$ is the fluid pressure, $\nu=\frac{1}{Re}$ is the viscosity with  ${Re}$ denoting the Reynolds number, and $\bm f$ is the external body force. 
Designing efficient, accurate, and long-term stable numerical methods for solving the above incompressible NS equations \eqref{NS:orig} in scientific and engineering applications remains a highly demanding and challenging task

First of all, the accurate numerical simulation of  flows with large Reynolds numbers presents persistent theoretical and practical challenges. As Kolmogorov's K41 theory \cite{Kolmogorov} suggests, adequately resolving the eddies in such flows requires a mesh size scaling as $\approx Re^{-3/4}$ in each direction, often leading to prohibitive computational costs. In practice, however, small viscosities and dominant convective effects—hallmarks of high-Reynolds-number regimes—make the construction of robust numerical methods essential. Robustness, in this context, refers to error estimates whose constants do not depend on inverse powers of the viscosity, allowing stable simulations on relatively coarse grids \cite{garcia2021convergence}. Many existing error analyses \cite{huang2021stability,belding2022efficient,heywood1990finite} for fully discrete Navier–Stokes formulations remain viscosity-dependent, limiting their applicability to moderate Reynolds numbers. Furthermore, the frequent use of Gronwall’s lemma \cite{gronwall1919note,bellman1943stability}  in stability analysis tends to introduce exponential dependence on the Reynolds number via the Gronwall constant, thereby precluding robust long-time estimates. Notably, as observed in prior work, achieving robust error bounds for classical finite element pairs such as Taylor–Hood typically requires special treatment of the nonlinear term, for instance through the EMAC formulation \cite{olshanskii2020longer}, which yields a viscosity-independent Gronwall constant and ensures long-time stability. Complementing such formulations, the addition of grad-div stabilization \cite{olshanskii2002low,case2011connection,lan2026robust} is widely recognized to enhance numerical stability by improving local mass conservation, particularly in high-Reynolds-number regimes. Recent advances, summarized in contemporary review article \cite{garcia2021convergence}, have demonstrated the feasibility of obtaining viscosity-robust error estimates for velocity, marking important progress toward reliable coarse-grid simulations of turbulent flows.

Secondly, the incompressibility condition, $\nabla\cdot\mathbf{u}=0$, poses a foundational challenge in designing stable and efficient numerical schemes. This divergence-free constraint is stringent and, if not correctly handled, can degrade the quality of numerical solutions \cite{john2017divergence,cappanera2025artificial}. In traditional finite element methods, stability requires the discrete satisfaction of the inf-sup condition (also called as LBB condition) \cite{boffi2013mixed}. While widely-implemented elements like the Taylor-Hood pair \cite{taylor1973numerical} are popular for their ease of implementation, they fail to ensure local mass conservation. To address such issue, divergence-free elements like the Scott-Vogelius (SV) type \cite{scott1985conforming,scott1985norm} are designed to offer local conservation. However, it requires some special mesh splitting, like Alfeld splitting and Powell-Sabin splitting.  

As an alternative class of methods, pressure-projection methods \cite{chorin1968numerical,guermond2006overview} decouple the velocity and pressure solves by relaxing the incompressibility constraint in a predictor step, followed by a projection to enforce incompressibility. However, these methods inherently introduce splitting errors. For instance, projection methods can generate spurious pressure oscillations due to approximate boundary conditions. To overcome this issue, Quarteroni et al. \cite{quarteroni2000factorization} proposed the algebraic splitting (also known as Yosida method) to decouple the velocity and pressure unknowns based on the discretized linear system, where the boundary conditions are properly handled before splitting. Some related work and  further developments can be found in \cite{saleri2005pressure,rebholz2017improved,rebholz2019efficient}.

The Artificial Compressibility Method (ACM) emerges as a distinct and significant paradigm, pioneered by Chorin \cite{chorin1997numerical} and Teman \cite{temam1969approximation1,temam1969approximation2}. Its core innovation is the regularization of the strict incompressibility constraint by introducing a pseudo-time derivative of pressure via a penalty parameter $\varepsilon > 0$:
\begin{equation}\label{acm:rog}
	\varepsilon \frac{\partial p}{\partial t} + \nabla\cdot \bu = 0.
\end{equation}
This transforms the governing equations from an elliptic-parabolic system into a hyperbolic-parabolic type. 
This transformation enables the application of efficient time-marching schemes to solve the resulting system in pseudo time  \cite{hejranfar2018application}.
The significance and advantages of ACM are multifaceted:
\begin{itemize}
	\item Superior Computational Complexity: A key advantage of ACM over projection methods is its avoidance of solving a full pressure Poisson equation at each step, whose computational cost scales with $h^{-2}$ (where $h$ is the mesh size) and can become a computational bottleneck \cite{cappanera2025artificial}. Instead, the linear systems in AC formulations can scale more favorably, for instance, with $O(\tau h^{-2})$, offering a potential performance benefit \cite{lundgren2023high,guermond2015high}.
\item Framework for High-Order Accuracy: Contrary to the historical limitation of first-order temporal accuracy, recent breakthroughs have demonstrated that ACM provides a robust framework for constructing high-order accurate numerical schemes in time \cite{guermond2015high,guermond2019high}. This addresses a primary drawback of classical projection methods and places modern ACM on par with other high-order approaches.
\item Versatility and Solver Compatibility: By recasting the problem into a hyperbolic-parabolic form, ACM allows the application of a wide array of sophisticated time-integration and upwind schemes from the compressible flow arena \cite{hejranfar2018application}. Furthermore, its formulation accommodates various spatial discretizations, including finite element, finite volume, and finite difference methods \cite{lundgren2023high}.
\end{itemize}

Despite its significant advantages, ACM faces persistent analytical and practical challenges. Two key issues are: (i) ensuring robust performance across a wide spectrum of Reynolds numbers, and (ii) rigorously controlling the perturbation parameter $\varepsilon$ within the error analysis to prevent order reduction. The latter is paramount for establishing a scheme's practical reliability. A rigorous numerical analysis must be constructed to avoid the emergence of an $\varepsilon^{-1}$ term in the final error estimate. The presence of such a term in an a priori bound induces severe, often catastrophic, order reduction as $\varepsilon \to 0$ (with $\varepsilon$ typically chosen as $\mathcal{O}(\tau)$ or $\mathcal{O}(\tau^2)$). An analysis exhibiting this dependency is non-robust, invalidating the claimed convergence rates. Therefore, a core analytical challenge lies in developing mathematical techniques  to control the pressure and divergence errors uniformly with respect to $\varepsilon$.

Notably, the critical issue of $\varepsilon$-uniform error estimation is overlooked in several recent studies of ACM-type methods \cite{prohl1997projection,li2025low,cappanera2025artificial,aggul2023artificial}. A typical example is the work by Prohl \cite{prohl1997projection}, where the approximation error inherently depends on $\varepsilon$, scaling as $\mathcal{O}(\varepsilon(1+\log(1/\varepsilon)))$ (cf. \cite[Theorem 4.1]{prohl1997projection}). In contrast, the dependency on $\varepsilon$ was carefully addressed in the foundational work by Shen \cite{shen1995error,shen1996new} for related penalty methods. However, Shen's analyses were strictly confined to temporal semi-discretization, leaving the fully discrete error estimates unexplored. Furthermore, these results did not account for the influence of the Reynolds number, thus lacking $\nu$-robustness. Extending the semi-discrete analysis to the fully discrete setting is highly non-trivial, as spatial discretization errors (particularly from the nonlinear term) inevitably couple with the parameters $\nu$ and $\varepsilon$, making the derivation of parameter-uniform bounds significantly more challenging. Consequently, there remains a notable gap in the literature concerning fully discrete, high-order ACM schemes with rigorous, parameter-uniform error analyses valid across all Reynolds numbers.

In this work, we propose a grad-div stabilized Crank-Nicolson fully discrete scheme for ACM and rigorously establish its parameter-robust optimal error estimates. The main contributions are summarized as follows:
\begin{itemize}
	\item 
Fully discrete parameter-robust analysis: To the best of our knowledge, this is the first work to provide fully discrete optimal error estimates for a second-order ACM scheme that are simultaneously robust with respect to both the viscosity $\nu$ (Reynolds number) and the artificial compressibility parameter $\varepsilon$. Specifically, we establish optimal convergence rates for the velocity in both the $L^2$ and $H^1$ norms, and for the pressure in the $L^2$ norm. Our analysis avoids the $\varepsilon^{-1}$ dependency that causes order reduction and the $\nu^{-1}$ dependency that limits long-time accuracy.
\item 
Dual grad-div stabilization: We incorporate two grad-div stabilization terms serving distinct purposes. The first ameliorates the inherent violation of incompressibility caused by the continuous perturbation, effectively decoupling the velocity and pressure solves. The second, critically, provides the necessary stability mechanism to ensure Reynolds number robustness.
\item 
Scott-Vogelius spatial discretization: We employ the SV finite element pair on Alfeld refined meshes. This choice ensures that the discrete velocity divergence belongs to the discrete pressure space, which is fundamental for the decoupling strategy and the subsequent $\varepsilon$-robust error analysis.
\end{itemize}

The rest of paper is organized as follows. In Section \ref{prelim}, we will introduce the spatial discretization with the Scott-Vogelius element and some preliminaries for error analysis. Following that, the  second order Crank-Nicolson scheme with grad-div stabilized finite element scheme together with  the energy stability is  presented in Section \ref{sect:acmcn}. Robust $H^1$ error estimate of the proposed  CN-ACM scheme is derived  in Section \ref{sect:rey-robust}. The optimal $L^2$ error estimate is present in Section \ref{sect:opl2} by determining the difference between CN-ACM with the classical Navier-Stokes finite element scheme. Several numerical experiments are carried out in Section \ref{NumExp} to verify the theoretical results and demonstrate the effectiveness and efficiency of the proposed schemes, especially  for problems with large Reynolds numbers. Finally, some concluding remarks are drawn in Section \ref{conclusion}.

\section{Preliminaries and notations}\label{prelim}

Let $\Omega \subset \mathbb{R}^d$ ($d \geq 2$) be a bounded Lipschitz domain. Throughout this paper, we adopt standard notations for Lebesgue and Sobolev spaces. For $1 \leq p \leq \infty$, we denote by $L^p(\Omega)$ the Lebesgue space equipped with the norm $\|\cdot\|_{L^p}$; in the case $p = 2$, the space is endowed with the canonical inner product $(\cdot,\,\cdot)$, and the corresponding norm is simply written as $\|\cdot\|$. For $m \in \mathbb{N}$, $W^{m,p}(\Omega)$ denotes the standard Sobolev space with norm $\|\cdot\|_{W^{m,p}}$, and we write $H^m(\Omega) := W^{m,2}(\Omega)$ with the associated norm $\|\cdot\|_m$ and semi-norm $|\cdot|_m$. The closed subspace $H_0^1(\Omega) \subset H^1(\Omega)$ consists of functions with vanishing trace on $\partial\Omega$, and its dual space is denoted by $H^{-1}(\Omega)$, equipped with the norm $\|\cdot\|_{-1}$.

For time-dependent function spaces, we introduce $L^q(0,T; W^{m,p}(\Omega))$ ($1 \leq q \leq \infty$) as the space of $W^{m,p}(\Omega)$-valued functions equipped with the norm
$$
\|u\|_{L^q(0,T;\,W^{m,p}(\Omega))} :=
\begin{cases}
	\textstyle \displaystyle\left(\int_0^T \|u(t)\|_{W^{m,p}}^q \, dt\right)^{1/q}, & 1 \leq q < \infty, \\[6pt]
	\textstyle \displaystyle\operatorname*{ess\,sup}_{t \in (0,T)} \|u(t)\|_{W^{m,p}}, & q = \infty.
\end{cases}
$$
For brevity, $L^q(0,T; W^{m,p}(\Omega))$ will be abbreviated as $L^q(0,T; W^{m,p})$. Throughout this work, unless stated otherwise, we impose homogeneous Dirichlet boundary conditions, i.e.,
$\bm{u}_D(\bm{x}, t) = \bm{0}$ for all $(\bm{x}, t) \in \partial\Omega \times (0, T]$.
We often suppress explicit dependence on the domain $\Omega$ and the time interval $(0,T)$ when clear from context.

We define the Sobolev space pair $\bm{X} \times M$ as
\begin{equation*}
	\begin{aligned}
		\bm{X} &= \bigl(H_0^1(\Omega)\bigr)^d := \bigl\{ \bv \in \bigl(H^1(\Omega)\bigr)^d \;\big|\; \bv|_{\partial\Omega} = \bm{0} \bigr\}, \\
		M &= L_0^2(\Omega) := \Bigl\{ q \in L^2(\Omega) \;\Big|\; \int_{\Omega} q \, dx = 0 \Bigr\}.
	\end{aligned}
\end{equation*}

By the Sobolev embedding theorems \cite{adams2003sobolev}, for any $\bw \in \bm{X}$ the following estimates hold:
\begin{equation}\label{embed:sob}
	\left\{
	\begin{aligned}
		\|\bw\|_{L^4} &\leq C_{\mathrm{em}}^{(1)} \|\bw\|_{1}, &\quad& d = 2, \\
		\|\bw\|_{L^3} &\leq C_{\mathrm{em}}^{(2)} \|\bw\|_{1}, &\quad& d = 3, \\
		\|\bw\|_{L^6} &\leq C_{\mathrm{em}}^{(3)} \|\bw\|_{1}, &\quad& d = 3,
	\end{aligned}
	\right.
\end{equation}
where the positive constants $C_{\mathrm{em}}^{(i)}$ depend only on $\Omega$. For simplicity, we set $C_{\mathrm{em}} = \max\{C_{\mathrm{em}}^{(1)},\, C_{\mathrm{em}}^{(2)},\, C_{\mathrm{em}}^{(3)}\}$.

\begin{lem}[Poincar\'{e}'s inequality {\cite{adams2003sobolev,evans2022partial}}]
	Let $\Omega$ be a bounded, open subset of $\mathbb{R}^d$. For any $u \in W_0^{1,p}(\Omega)$ with $1 \leq p \leq \infty$, there exists a positive constant $C_{\mathrm{Pi}}$, depending on $p$, $d$, and $\Omega$, such that
	\begin{equation*}
		\|u\|_{L^p(\Omega)} \leq C_{\mathrm{Pi}} \|\nabla u\|_{L^p}.
	\end{equation*}
\end{lem}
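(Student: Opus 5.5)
Because $C_c^\infty(\Omega)$ is dense in $W_0^{1,p}(\Omega)$ for $1\le p<\infty$, and both sides of the inequality are continuous in the $W^{1,p}$ norm, it suffices to prove the estimate for $u\in C_c^\infty(\Omega)$. The proof is then a direct one-dimensional argument.

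\textbf{The case $1\le p<\infty$.} Since $\Omega$ is bounded, it lies in a slab $\{x : a<x_1<b\}$ with $b-a\le \operatorname{diam}\Omega$. Extend $u$ by zero outside $\Omega$. For each $x$, the fundamental theorem of calculus in the $x_1$ direction gives
\[
u(x)=\int_a^{x_1}\partial_1 u(s,x')\,ds .
\]
Applying H\"older's inequality,
\[
|u(x)|^p\le (b-a)^{p-1}\int_a^b|\partial_1 u(s,x')|^p\,ds .
\]
Integrate first over $x_1\in(a,b)$, which contributes one more factor $(b-a)$, and then over $x'$. This yields
\[
\|u\|_{L^p}^p\le (b-a)^p\|\partial_1 u\|_{L^p}^p\le (b-a)^p\|\nabla u\|_{L^p}^p .
\]
Hence $C_{\mathrm{Pi}}=\operatorname{diam}\Omega$ works for every $p<\infty$.

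\textbf{The case $p=\infty$.} This is the main obstacle, because smooth functions are not dense in $W_0^{1,\infty}$ in norm. I would interpret $W_0^{1,\infty}(\Omega)$ as the Lipschitz functions vanishing on $\partial\Omega$. Take $x\in\Omega$ and move along the $x_1$ direction until the first boundary point $y$ is reached, so that the segment from $x$ to $y$ stays in $\overline\Omega$. Since $u(y)=0$ and $u$ has Lipschitz constant $\|\nabla u\|_{L^\infty}$ along segments contained in the domain, we get
\[
|u(x)|\le |x-y|\,\|\nabla u\|_{L^\infty}\le \operatorname{diam}\Omega\,\|\nabla u\|_{L^\infty}.
\]
Alternatively, one can obtain this case as the limit $p\to\infty$ of the uniform estimate above, since the constant $\operatorname{diam}\Omega$ does not depend on $p$ and $\|\cdot\|_{L^p}\to\|\cdot\|_{L^\infty}$ on bounded domains.

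The only step that needs care is the density and boundary-trace interpretation at $p=\infty$. Everything else is the one-line slab argument.
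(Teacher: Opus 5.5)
The paper gives no proof of this lemma; it is quoted from Adams and Evans, so there is no in-paper argument to compare with. Your proof for $1\le p<\infty$ is correct. You reduce to $C_c^\infty(\Omega)$ by the definition of $W_0^{1,p}$ and then run the one-dimensional slab estimate. This is essentially the finite-width argument in Adams. Evans instead obtains the inequality as a corollary of the Gagliardo--Nirenberg--Sobolev bound $\|u\|_{L^{p^*}}\le C\|\nabla u\|_{L^p}$ together with H\"older's inequality on a set of finite measure.

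Your route is more elementary and only needs $\Omega$ to lie between two parallel hyperplanes. It also gives an explicit constant $C_{\mathrm{Pi}}=b-a\le\operatorname{diam}\Omega$, independent of $p$ and of $u$, which repairs the quantifier order in the statement as written. The Sobolev route gives more, namely $L^q$ control up to $q=p^*$ when $p<d$, at the cost of heavier machinery and a less explicit constant.

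One correction concerns $p=\infty$. With the standard definition used in the cited references, $W_0^{1,p}(\Omega)$ is the $W^{1,p}$-norm closure of $C_c^\infty(\Omega)$, and this makes sense for $p=\infty$ too. So density holds by definition. That case is then just your slab computation with a supremum in place of H\"older, $|u(x)|\le\int_a^{x_1}|\partial_1u(s,x')|\,ds\le(b-a)\|\partial_1u\|_{L^\infty}$, followed by passage to the limit.

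The obstacle you describe only arises because you enlarge the space to Lipschitz functions continuous on $\overline\Omega$ and vanishing on $\partial\Omega$. Your segment argument is valid for that larger class. The bound along $[x,y)\subset\Omega$ comes from covering compact subsegments by balls in $\Omega$ and then letting the endpoint tend to $y$.

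For your alternative via $p\to\infty$, add one step: you need such $u$ to lie in $W_0^{1,p}(\Omega)$ for every finite $p$. This follows from the truncations $(u-\delta)^+-(u+\delta)^-$. They are compactly supported in $\Omega$ because $\Omega$ is bounded and $u$ vanishes continuously on $\partial\Omega$, and they converge to $u$ in $W^{1,p}$ as $\delta\to 0$.
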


%To handle the norm $\|p\|_{-1}$, we require the following Poincar\'{e}-type inequality for functions in $L^2(\Omega)$.
%
%\begin{lem}[{\cite[Proposition~IV.1.7]{boyer2012mathematical}}]\label{h-1:poin}
%	Let $\Omega$ be a connected, bounded, Lipschitz domain in $\mathbb{R}^d$. There exists a constant $C_{\mathrm{Pi},2} > 0$ such that for all $p \in L^2(\Omega)$,
%	\begin{equation*}
%		\|p\|_{-1} \leq C_{\mathrm{Pi},2} \left( \frac{1}{|\Omega|} \left| \int_{\Omega} p \, dx \right| + \|\nabla p\|_{-1} \right).
%	\end{equation*}
%\end{lem}

%We define $C_{\mathrm{Pi}} = \max\{C_{\mathrm{Pi},1},\, C_{\mathrm{Pi},2}\}$. By virtue of Lemma~\ref{h-1:poin}, for any $p \in M$ we can estimate $\|p\|_{-1}$ by $\|\nabla p\|_{-1}$. Moreover, $\|\nabla %p\|_{-1}$ is defined as
%\begin{equation*}
%	\|\nabla p\|_{-1} = \sup_{\bv \in \bm{X},\; \bv \neq \bm{0}} \frac{\displaystyle\int_{\Omega} \nabla p \cdot \bv \, dx}{\|\nabla \bv\|}
%	= -\sup_{\bv \in \bm{X},\; \bv \neq \bm{0}} \frac{\displaystyle\int_{\Omega} p\, \nabla \cdot \bv \, dx}{\|\nabla \bv\|}.
%\end{equation*}

The standard weak formulation of the Navier--Stokes equations~\eqref{NS:orig} reads: find $(\bu(t), p(t)) \in \bm{X} \times M$ for $t \in (0,T]$ such that
\begin{equation}\label{wkf:orig}
	\left\{
	\begin{aligned}
		(\bu_t, \bv) + ((\bu \cdot \nabla)\bu, \bv) + \nu(\nabla\bu, \nabla\bv) - (p, \nabla \cdot \bv) &= (\bff, \bv), &\quad& \forall\, \bv \in \bm{X}, \\
		(\nabla \cdot \bu, q) &= 0, &\quad& \forall\, q \in M.
	\end{aligned}
	\right.
\end{equation}
Since the smoothed de~Rham complex
$$
\mathbb{R} \xrightarrow{\subset} H^2(\Omega) \xrightarrow{\mathrm{curl}} \bm{H}^1(\Omega) \xrightarrow{\mathrm{div}} L^2(\Omega) \longrightarrow 0
$$
is exact on a simply connected domain, formulation~\eqref{wkf:orig} is equivalent to the following grad-div stabilized form:
\begin{equation}\label{wkf:orig:stab}
	\left\{
	\begin{aligned}
		(\bu_t, \bv) + ((\bu \cdot \nabla)\bu, \bv) + \nu(\nabla\bu, \nabla\bv) + \mu(\nabla \cdot \bu, \nabla \cdot \bv) - (p, \nabla \cdot \bv) &= (\bff, \bv), &\quad& \forall\, \bv \in \bm{X}, \\
		(\nabla \cdot \bu, q) &= 0, &\quad& \forall\, q \in M,
	\end{aligned}
	\right.
\end{equation}
where $\mu \geq 0$ is an artificial stabilization parameter.

Let $\mathcal{T}_h$ be a shape-regular and quasi-uniform triangulation of $\Omega$ with mesh size $h = \max\{\mathrm{diam}(K) \mid K \in \mathcal{T}_h\}$. The Scott--Vogelius mixed finite element pair $\bm{X}_h \times M_h$ \cite{fabien2022low} for spatial discretization is defined by
\begin{equation*}
	\bm{X}_h := [\mathbb{P}_k]^d \cap \bm{X}, \qquad
	M_h := \mathbb{P}_{k-1}^{\mathrm{disc}} \cap M,
\end{equation*}
with
$$
\begin{aligned}
	\mathbb{P}_k &= \bigl\{ v_h \in C(\bar{\Omega}) : v_h|_K \in P^k(K),\; \forall\, K \in \mathcal{T}_h \bigr\}, \\
	\mathbb{P}_{k-1}^{\mathrm{disc}} &= \bigl\{ q_h \in L^2(\Omega) : q_h|_K \in P^{k-1}(K),\; \forall\, K \in \mathcal{T}_h \bigr\},
\end{aligned}
$$
where $P^k(K)$ denotes the space of polynomials of degree at most $k$ on the element $K$.
The Scott--Vogelius element on Alfeld splits (i.e., barycenter-refined meshes) satisfies the discrete inf-sup condition \cite{guzman2018inf} when $k \geq d$. Specifically, there exists a constant $\gamma > 0$ such that
\begin{equation}\label{dis:infsup}
	\inf_{q_h \in M_h} \sup_{\bv_h \in \bm{X}_h} \frac{(q_h, \nabla \cdot \bv_h)}{\|q_h\| \, \|\bv_h\|_1} \geq \gamma.
\end{equation}
Consequently, for any $p_h \in M_h$, there exists a unique $\bv_h \in \bm{X}_h$ satisfying
\begin{equation*}
	(\nabla \cdot \bv_h, q_h) = (p_h, q_h), \quad \forall\, q_h \in M_h,
\end{equation*}
with the bound
$$
\|\bv_h\|_1 \leq C_{\mathrm{inf}} \|p_h\|,
$$
where $C_{\mathrm{inf}} > 0$ is independent of $p_h$. In this paper, we focus primarily on the case $d = 2$ and accordingly consider $k = 2$.
In addition, the following approximation properties hold \cite{BrennerScott}:
\begin{equation}\label{ApproxProperties}
	\left\{
	\begin{aligned}
		\inf_{\bv \in \bm{X}_h} \|\bw - \bv\|_k &\leq C_0^{(1)}\, h^{3-k} \|\bw\|_3, &\quad& \forall\, \bw \in (H^3(\Omega))^d,\; k = 0, 1, \\
		\inf_{q \in M_h} \|r - q\| &\leq C_0^{(2)}\, h^2 \|r\|_2, &\quad& \forall\, r \in H^2(\Omega),
	\end{aligned}
	\right.
\end{equation}
where $C_0^{(1)} > 0$ and $C_0^{(2)} > 0$ are constants depending only on $\Omega$. For simplicity, we set $C_0 = \max\{C_0^{(1)},\, C_0^{(2)}\}$.

We define $\bm{V}$ as the divergence-free subspace of $\bm{X}$ and $\bm{V}_h$ as the divergence-free subspace of $\bm{X}_h$:
\begin{equation*}
	\bm{V} := \bigl\{ \bv \in \bm{X} \;\big|\; \nabla \cdot \bv = 0 \text{ a.e.\ in } \Omega \bigr\}, \qquad
	\bm{V}_h := \bigl\{ \bv_h \in \bm{X}_h \;\big|\; (q_h, \nabla \cdot \bv_h) = 0,\; \forall\, q_h \in M_h \bigr\}.
\end{equation*}
We note that $\bm{V}_h \subseteq \bm{V}$ in the case of Scott--Vogelius elements.

Next, we introduce the Stokes projection $(\mathrm{P}_{\mathrm{st}},\, \mathrm{Q}_{\mathrm{st}})$: for any given $(\bw, q) \in \bm{X} \times M$, find $(\mathrm{P}_{\mathrm{st}}\bw,\, \mathrm{Q}_{\mathrm{st}}q) \in \bm{X}_h \times M_h$ such that
\begin{equation}\label{stokes:proj}
	\nu(\nabla(\bw - \mathrm{P}_{\mathrm{st}}\bw), \nabla\bv_h) - (\nabla \cdot \bv_h,\, q - \mathrm{Q}_{\mathrm{st}}q) + (\nabla \cdot (\bw - \mathrm{P}_{\mathrm{st}}\bw),\, q_h) = 0, \quad \forall\, (\bv_h, q_h) \in \bm{X}_h \times M_h.
\end{equation}
The stability and approximation properties of the Stokes projection~\eqref{stokes:proj} are summarized in the following lemma.

\begin{lem}[Stability and approximation of the Stokes projection {\cite{GiraultNochettoScott,chen2006pointwise,garcia2021convergence,SchroederLube}}]
	\label{stokes_stability}
	For any fixed $r \in [2, \infty]$, there exists a constant $C_{\mathrm{st}} > 0$ depending only on $\Omega$ such that
	\begin{equation*}
		\|\nabla \mathrm{P}_{\mathrm{st}} \bw\|_{L^r} \leq C_{\mathrm{st}} \|\nabla \bw\|_{L^r}, \quad \forall\, \bw \in \bm{V}.
	\end{equation*}
	In addition, the following error estimates hold:
	\begin{equation*}
		\left\{
		\begin{aligned}
			\|\bw - \mathrm{P}_{\mathrm{st}} \bw\| + h\, \|\bw - \mathrm{P}_{\mathrm{st}} \bw\|_1 &\leq C_{\mathrm{st}}\, h^3 \|\bw\|_3, \\
			\|q - \mathrm{Q}_{\mathrm{st}} q\| &\leq C_{\mathrm{st}}\, h^2 \bigl(\|\bw\|_3 + \|q\|_2\bigr).
		\end{aligned}
		\right.
	\end{equation*}
\end{lem}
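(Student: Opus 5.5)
Most of this lemma is standard and the paper attributes it to the literature, so the plan is to reduce it to known results using the special structure of the Scott--Vogelius pair.

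\textbf{Step 1: decoupling of velocity and pressure.} Since $\nabla\cdot\bm X_h\subseteq M_h$ on Alfeld splits, taking $q_h=\nabla\cdot(\bw-\mathrm{P}_{\mathrm{st}}\bw)$ is legitimate whenever $\bw\in\bm V$. This forces $\nabla\cdot\mathrm{P}_{\mathrm{st}}\bw=0$, so $\mathrm{P}_{\mathrm{st}}\bw\in\bm V_h\subseteq\bm V$. Restricting the test functions to $\bv_h\in\bm V_h$ removes the pressure term. Hence $\mathrm{P}_{\mathrm{st}}\bw$ is exactly the $H^1_0$-Ritz projection of $\bw$ onto $\bm V_h$:
$$(\nabla(\bw-\mathrm{P}_{\mathrm{st}}\bw),\nabla\bv_h)=0\quad\forall\,\bv_h\in\bm V_h .$$
In particular $\nu$ drops out, which is why $C_{\mathrm{st}}$ does not depend on $\nu$.

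\textbf{Step 2: $H^1$ and $L^2$ velocity estimates.} The $H^1$ estimate follows from C\'ea's lemma in $\bm V_h$, which gives $\|\nabla(\bw-\mathrm{P}_{\mathrm{st}}\bw)\|\le\inf_{\bv_h\in\bm V_h}\|\nabla(\bw-\bv_h)\|$. We then need a divergence-free approximant. By the inf-sup condition \eqref{dis:infsup}, there is a Fortin-type argument: take $\bv=\mathrm{I}_h\bw$ from \eqref{ApproxProperties} and correct its divergence, which lies in $M_h$, using the right inverse with bound $C_{\mathrm{inf}}$. Since $\|\nabla\cdot(\bw-\bv)\|\le\|\bw-\bv\|_1$, the corrected function is in $\bm V_h$ and its error is $\le C h^2\|\bw\|_3$. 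The $L^2$ estimate follows by Aubin--Nitsche. Solve the continuous Stokes problem with right-hand side $\bw-\mathrm{P}_{\mathrm{st}}\bw$, use $H^2\times H^1$ regularity (assuming a convex or smooth domain), and use Galerkin orthogonality on $\bm V_h$ together with the approximation of the dual pressure. This gains one power of $h$.

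\textbf{Step 3: pressure estimate.} The pressure comes from testing with general $\bv_h$. For any $r_h\in M_h$ we have
$$(\nabla\cdot\bv_h,\mathrm{Q}_{\mathrm{st}}q-r_h)=(\nabla\cdot\bv_h,q-r_h)-\nu(\nabla(\bw-\mathrm{P}_{\mathrm{st}}\bw),\nabla\bv_h).$$
Applying inf-sup then gives $\|q-\mathrm{Q}_{\mathrm{st}}q\|\le C(\|q-r_h\|+\nu\|\bw-\mathrm{P}_{\mathrm{st}}\bw\|_1)$. Combined with the second line of \eqref{ApproxProperties} and $\nu\le1$, this yields $h^2(\|\bw\|_3+\|q\|_2)$.

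\textbf{Step 4: $W^{1,r}$ stability.} This is the main obstacle. I would not reprove it but invoke the cited results (\cite{GiraultNochettoScott,chen2006pointwise}), which use weighted-norm or discrete Green's function techniques on quasi-uniform meshes. The case $r=\infty$ requires a log-free estimate for the Stokes projection on a suitable (e.g.\ convex polygonal) domain. The range $2<r<\infty$ then follows by interpolation between $r=2$, which is trivial from the Ritz characterization in Step 1, and $r=\infty$.
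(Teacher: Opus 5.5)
Your proposal is correct in substance, but there is no proof in the paper to compare it with: the lemma is simply imported from the cited references. Your Steps 1--3 supply what the citation hides. The key observation is that for Scott--Vogelius and $\bw\in\bm V$, $\mathrm{P}_{\mathrm{st}}\bw$ is the $H^1_0$-Ritz projection onto $\bm V_h\subseteq\bm V$. This is exactly what makes the velocity bounds independent of $\nu$ and of $q$, which the robust analysis later in the paper tacitly relies on.

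Your argument also exposes two hypotheses the statement hides:
\begin{itemize}
\item A $\nu$-free constant in the pressure bound requires $\nu\le 1$. The paper imposes $\max\{\tau,h,\nu,\mu\}\le 1$ only later.
\item The $L^2$ bound needs $H^2\times H^1$ regularity of the dual Stokes problem. That means a convex polygon rather than the Lipschitz domain announced in the preliminaries, and the max-norm results you invoke need convexity too.
\end{itemize}
Two small simplifications are available. First, $\nabla\cdot\bm X_h\subseteq M_h$ holds on any mesh; the Alfeld split is needed only for inf-sup. Second, in the Aubin--Nitsche step the dual pressure never enters, because $\nabla\cdot(\bw-\mathrm{P}_{\mathrm{st}}\bw)=0$ pointwise.

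The one step that does not work as written is the interpolation in Step 4. Your $r=2$ bound comes from the Ritz characterization, so it holds only on the solenoidal subspace. You would therefore be interpolating between $\bm V\cap W^{1,2}_0$ and $\bm V\cap W^{1,\infty}_0$. Nothing standard identifies the intermediate spaces with $\bm V\cap W^{1,r}_0$; for that you would need a projection onto solenoidal fields bounded simultaneously in $W^{1,2}$ and $W^{1,\infty}$. Extending to arbitrary matrix fields $G$ and using Riesz--Thorin also fails. The map $G\mapsto\nabla u_h$ is a discrete analogue of a Calder\'on--Zygmund operator and has no $h$-uniform $L^\infty$ bound.

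The repair is easy. With Scott--Vogelius, the velocity part of the Stokes projection is independent of $q$ and $\nu$ for every $\bw\in\bm X$, not only for $\bw\in\bm V$. The reason is that $\nabla\cdot\mathrm{P}_{\mathrm{st}}\bw$ is the $L^2$-projection of $\nabla\cdot\bw$ onto $M_h$, and the orthogonality against $\bm V_h$ persists. Hence $\mathrm{P}_{\mathrm{st}}$ is a single linear operator on $\bm X$. It is bounded in $W^{1,2}$ via the inf-sup right inverse, and in $W^{1,\infty}$ by the cited max-norm results applied with $q=0$ and $\nu=1$. Interpolate this operator on the unconstrained $W^{1,p}_0$ scale, as is done for the scalar Ritz projection, and only then restrict to $\bm V$. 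Alternatively, cite the full range $r\in[2,\infty]$ directly. In any case, the paper only ever uses the endpoint cases $r=2$ and $r=\infty$.
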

We also recall the following standard inverse inequalities \cite{BrennerScott}.
\begin{lem}[Inverse inequalities]
	For any fixed $0 \leq n \leq m \leq 1$ and $1 \leq q \leq p \leq \infty$, there exists a constant $\overline{C}_{\mathrm{inv}} > 0$, depending only on $\Omega$, such that
	\begin{equation*}
		\|\bw_h\|_{W^{m,p}} \leq \overline{C}_{\mathrm{inv}}\, h^{n - m - d\left(\frac{1}{q} - \frac{1}{p}\right)} \|\bw_h\|_{W^{n,q}}, \quad \forall\, \bw_h \in \bm{X}_h.
	\end{equation*}
\end{lem}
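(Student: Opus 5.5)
The plan is to follow the standard scaling argument on the reference element, since the statement is a classical inverse estimate for piecewise polynomials of fixed degree $k$ on a shape-regular, quasi-uniform mesh. The inequality is first proved elementwise and then assembled globally.

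Fix $K\in\mathcal T_h$ with the affine map $F_K:\hat K\to K$, $F_K(\hat x)=B_K\hat x+b_K$. Shape regularity gives $\|B_K\|\le Ch_K$, $\|B_K^{-1}\|\le Ch_K^{-1}$ and $|\det B_K|\simeq h_K^d$. On the reference element, the finite-dimensional space $[P^k(\hat K)]^d$ has all norms equivalent. Hence $\|\hat w\|_{W^{m,p}(\hat K)}\le \hat C\|\hat w\|_{W^{n,q}(\hat K)}$, where $\hat C$ depends only on $k,d,p,q$. For $n\le m$ this also holds with the seminorms appropriately: if $m=1,n=0$, then $|\hat w|_{W^{1,p}}\le \hat C\|\hat w\|_{L^q}$.

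Next comes the change of variables. One has
\[
|w|_{W^{j,p}(K)}\simeq h_K^{-j+d/p}\,|\hat w|_{W^{j,p}(\hat K)}.
\]
Combining the two scalings yields the local estimate
\[
\|w_h\|_{W^{m,p}(K)}\le C\,h_K^{\,n-m-d(1/q-1/p)}\|w_h\|_{W^{n,q}(K)}.
\]
For the full norm $W^{m,p}$ with $m=1$, the $L^p$ and $|\cdot|_{1,p}$ parts are treated separately. The lower-order part carries the factor $h^{-d(1/q-1/p)}$, and $h\le\mathrm{diam}\,\Omega$ lets us bound it by the larger negative power.

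The main obstacle is the global summation when $p\ne q$. For $p<\infty$, quasi-uniformity replaces every $h_K$ by $h$ up to constants. We then write
\[
\|w_h\|_{W^{m,p}(\Omega)}^p=\sum_K\|w_h\|_{W^{m,p}(K)}^p\le C h^{p(\cdots)}\sum_K\|w_h\|_{W^{n,q}(K)}^p.
\]
Since $p\ge q$, the embedding $\ell^q\subset\ell^p$ gives $\bigl(\sum_K a_K^p\bigr)^{1/p}\le\bigl(\sum_K a_K^q\bigr)^{1/q}$, which closes the estimate. For $p=\infty$ we instead take the maximum over elements and bound it by the $\ell^q$ sum. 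Quasi-uniformity is essential here: with only shape regularity, the local powers of $h_K$ could not be replaced by a global $h$ with a negative exponent.

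Collecting the constants from the reference-element norm equivalence, the shape-regularity bounds, and the finitely many choices of $(m,n,p,q)$ gives $\overline C_{\mathrm{inv}}$. Strictly, this constant depends on $k$, $d$, and the mesh regularity constants, which the paper subsumes in "depending only on $\Omega$".
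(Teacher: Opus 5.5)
Your scaling argument is correct and is essentially the standard proof in Brenner--Scott, which the paper cites without giving a proof of its own: a local estimate on each element via norm equivalence on $[P^k(\hat K)]^d$ and affine scaling, then quasi-uniformity and the embedding $\ell^q\subset\ell^p$ (or a maximum over elements when $p=\infty$) to assemble the global bound. One step deserves to be written out: in the case $m=n=1$ you need $|\hat w|_{W^{1,p}(\hat K)}\le \hat C\,|\hat w|_{W^{1,q}(\hat K)}$, which holds because both seminorms vanish exactly on constants and are therefore equivalent norms on $[P^k(\hat K)]^d$ modulo constants, whereas bounding $|\hat w|_{W^{1,p}(\hat K)}$ by the full $W^{1,q}(\hat K)$ norm would introduce a spurious factor $h^{-1}$.
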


\noindent In particular, we will use the following specific inverse estimates:
\begin{equation}\label{666}
	\|\bw_h\|_{L^{\infty}} \leq \overline{C}_{\mathrm{inv}}^{(1)}\, h^{-d/2} \|\bw_h\|, \qquad
	\|\bw_h\|_{H^1} \leq \overline{C}_{\mathrm{inv}}^{(2)}\, h^{-1} \|\bw_h\|, \qquad
	\|\bw_h\|_{L^{2d/(d-1)}} \leq \overline{C}_{\mathrm{inv}}^{(3)}\, h^{-1/2} \|\bw_h\|.
\end{equation}
We define $C_{\mathrm{inv}} = \max\bigl\{\overline{C}_{\mathrm{inv}}^{(1)},\, \overline{C}_{\mathrm{inv}}^{(2)},\, \overline{C}_{\mathrm{inv}}^{(3)}\bigr\}$ for simplicity.

For the temporal discretization, we consider a uniform partition of the time interval $[0,T]$ with time step size $\tau = T/N$ and denote $t_n = n\tau$ for $n = 0, 1, \ldots, N$. For a generic variable $\varphi$, we let $\varphi^n$ denote its value at $t = t_n$, i.e., $\varphi^n = \varphi(t_n)$.
The following consistency error estimates will be used frequently for temporal error analysis; see the appendix of~\cite{ErvinLaytonNeda}.

\begin{lem}[Temporal consistency errors {\cite{ErvinLaytonNeda}}]
	\label{lemp1}
	Assume $\bu \in C^0(t_n, t_{n+1}; L^2(\Omega))$ and $\bu_{tt} \in L^2((t_n, t_{n+1}) \times \Omega)$. Then
	\begin{equation}\label{ConsistencyError1}
	\textstyle	\left\| \frac{\bu(t_{n+1}) + \bu(t_n)}{2} - \bu(t_{n+1/2}) \right\|^2 \leq \frac{\tau^3}{48} \int_{t_n}^{t_{n+1}} \|\bu_{tt}\|^2 \, dt.
	\end{equation}
	If $\nabla\bu \in C^0(t_n, t_{n+1}; L^2(\Omega))$ and $\nabla\bu_{tt} \in L^2((t_n, t_{n+1}) \times \Omega)$, then
	\begin{equation}\label{ConsistencyError2}
	\textstyle	\left\| \nabla\!\left( \frac{\bu(t_{n+1}) + \bu(t_n)}{2} - \bu(t_{n+1/2}) \right) \right\|^2 \leq \frac{\tau^3}{48} \int_{t_n}^{t_{n+1}} \|\nabla\bu_{tt}\|^2 \, dt.
	\end{equation}
	If $\bu_t \in C^0(t_n, t_{n+1}; L^2(\Omega))$ and $\bu_{ttt} \in L^2((t_n, t_{n+1}) \times \Omega)$, then
	\begin{equation}\label{ConsistencyError3}
	\textstyle	\left\| \frac{\bu(t_{n+1}) - \bu(t_n)}{\tau} - \bu_t(t_{n+1/2}) \right\|^2 \leq \frac{\tau^3}{1280} \int_{t_n}^{t_{n+1}} \|\bu_{ttt}\|^2 \, dt.
	\end{equation}
\end{lem}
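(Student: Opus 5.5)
Each of the three bounds in Lemma~\ref{lemp1} reduces to the scalar Taylor expansion with integral remainder, followed by Cauchy--Schwarz in time and integration over $\Omega$. It suffices to treat a scalar function $g(t)$ with values in $L^2(\Omega)$, applied componentwise; \eqref{ConsistencyError2} then follows from \eqref{ConsistencyError1} by replacing $\bu$ with $\nabla\bu$. By density, we may assume $\bu$ is smooth in time and pass to the limit at the end using the stated regularity. Set $t_{n+1/2}=t_n+\tau/2$.

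For \eqref{ConsistencyError1}, expand $\bu(t_{n+1})$ and $\bu(t_n)$ about $t_{n+1/2}$ with integral remainders:
\[
\bu(t_{n+1})=\bu(t_{n+1/2})+\tfrac{\tau}{2}\bu_t(t_{n+1/2})+\int_{t_{n+1/2}}^{t_{n+1}}(t_{n+1}-s)\bu_{tt}(s)\,ds,
\]
and similarly for $\bu(t_n)$ with kernel $(s-t_n)$ on $[t_n,t_{n+1/2}]$. Averaging the two expansions cancels the first-order terms. This leaves
\[
\tfrac12\int_{t_n}^{t_{n+1}}K(s)\,\bu_{tt}(s)\,ds,
\]
where $K(s)=s-t_n$ on the first half-interval and $K(s)=t_{n+1}-s$ on the second. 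Cauchy--Schwarz in $s$ (pointwise in $x$), followed by integration over $\Omega$, gives the bound
\[
\tfrac14\Big(\int K^2\,ds\Big)\int_{t_n}^{t_{n+1}}\|\bu_{tt}\|^2\,dt .
\]
Since $\int K^2\,ds=2\cdot\frac{(\tau/2)^3}{3}=\tau^3/12$, the constant is $\tau^3/48$, as claimed.

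For \eqref{ConsistencyError3}, expand to third order about $t_{n+1/2}$. In $\bu(t_{n+1})-\bu(t_n)$ the even-order terms cancel and the term $\tau\bu_t(t_{n+1/2})$ survives. The remainder is
\[
\tfrac12\int_{t_{n+1/2}}^{t_{n+1}}(t_{n+1}-s)^2\bu_{ttt}\,ds+\tfrac12\int_{t_n}^{t_{n+1/2}}(s-t_n)^2\bu_{ttt}\,ds .
\]
Dividing by $\tau$ and applying Cauchy--Schwarz as before gives
\[
\frac{1}{4\tau^2}\cdot 2\cdot\frac{(\tau/2)^5}{5}\int_{t_n}^{t_{n+1}}\|\bu_{ttt}\|^2\,dt=\frac{\tau^3}{320}\int_{t_n}^{t_{n+1}}\|\bu_{ttt}\|^2\,dt .
\]
The only delicate point is the constant: this route yields $\tau^3/320$, not the claimed $\tau^3/1280$. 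To sharpen it, I would first try subtracting the symmetric Peano kernel optimally: use the kernel $\frac{1}{2\tau}(\tau/2-|s-t_{n+1/2}|)^2$ over the whole interval and compute its $L^2$ norm exactly, rather than splitting into two separate halves. If that still does not reach $1/1280$, I would defer to the cited appendix of \cite{ErvinLaytonNeda} for the precise constant. The exact value is immaterial for the subsequent analysis, since only the scaling $\mathcal O(\tau^3)$ is used later.
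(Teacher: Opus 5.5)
The paper gives no proof of this lemma; it only cites the appendix of \cite{ErvinLaytonNeda}. Your arguments for \eqref{ConsistencyError1} and \eqref{ConsistencyError2} are correct and complete, since $\frac14\int K^2\,ds=\frac{\tau^3}{48}$. Your computation for \eqref{ConsistencyError3}, giving $\tau^3/320$, is also correct. What fails is your plan for closing the gap to $\tau^3/1280$.

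The \emph{symmetric} kernel $\frac{1}{2\tau}(\tau/2-|s-t_{n+1/2}|)^2$ is exactly the kernel you already used. On $[t_n,t_{n+1/2}]$ it equals $\frac{1}{2\tau}(s-t_n)^2$, and on the other half it equals $\frac{1}{2\tau}(t_{n+1}-s)^2$, so recomputing its $L^2$ norm returns $\tau^3/320$.

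More importantly, no argument can do better. The functional $\bu\mapsto\frac{\bu(t_{n+1})-\bu(t_n)}{\tau}-\bu_t(t_{n+1/2})$ equals $\int_{t_n}^{t_{n+1}}k(s)\bu_{ttt}(s)\,ds$ with this $k$. The representing kernel is unique because $\bu_{ttt}$ can be any $L^2$ function. Cauchy--Schwarz is attained when $\bu_{ttt}(s,\bx)=k(s)\bm\phi(\bx)$, so $\tau^3/320$ is sharp and the stated constant $\tau^3/1280$ is false.

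A concrete counterexample is $\bu(t,\bx)=\frac{(t-t_n)^3}{6}\bm\phi(\bx)$ with $\|\bm\phi\|=1$. It gives
\[
\Big\|\frac{\bu(t_{n+1})-\bu(t_n)}{\tau}-\bu_t(t_{n+1/2})\Big\|^2=\Big(\frac{\tau^2}{6}-\frac{\tau^2}{8}\Big)^2=\frac{\tau^4}{576}>\frac{\tau^4}{1280}=\frac{\tau^3}{1280}\int_{t_n}^{t_{n+1}}\|\bu_{ttt}\|^2\,dt .
\]
Deferring to the cited appendix would therefore only import an incorrect constant.

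What you should do instead is state and prove \eqref{ConsistencyError3} with $\tau^3/320$, which your argument already does, and flag the misprint. Your closing remark is right in substance: only the $\mathcal{O}(\tau^3)$ scaling matters for the convergence rates. However, the paper tracks explicit constants. The coefficients inherited from \eqref{ConsistencyError3} should all be multiplied by $4$:
\begin{itemize}
\item the $\frac{\tau^3}{640}$ in the bounds for $\mathbb{I}_1$, $\mathbb{I}_6$ and in \eqref{time:cn};
\item consequently, the $\frac{1}{80}$ and $\frac{\varepsilon}{160}$ in $\mathcal{C}(t_n)$.
\end{itemize}
Theorem \ref{op:cn:h1} is otherwise unaffected.
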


Finally, we state the discrete Gr\"onwall inequality that will be employed in the error analysis.

\begin{lem}[Discrete Gr\"onwall lemma {\cite{john2016finite}}]\label{gronwall1}
	Let $\tau$, $H$, and $a_n, b_n, c_n, d_n$ (for integers $n \geq 0$) be finite non-negative numbers satisfying
	\begin{equation*}
	\textstyle	a_l + \tau \sum\limits_{n=0}^{l} b_n \leq \tau \sum\limits_{n=0}^{l-1} d_n\, a_n + \tau \sum\limits_{n=0}^{l} c_n + H, \quad \forall\, l \geq 1.
	\end{equation*}
	Then
	\begin{equation*}
	\textstyle	a_l + \tau \sum\limits_{n=0}^{l} b_n \leq \exp\!\left( \tau \sum\limits_{n=0}^{l-1} d_n \right) \left( \tau \sum\limits_{n=0}^{l} c_n + H \right), \quad \forall\, l \geq 1.
	\end{equation*}
\end{lem}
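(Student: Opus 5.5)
The statement to prove is the discrete Gr\"onwall lemma (Lemma~\ref{gronwall1}). The plan is to prove it by induction on $l$, working with a comparison sequence that dominates the right-hand side. Set
\begin{equation*}
S_l := \tau \sum_{n=0}^{l} c_n + H ,
\end{equation*}
which is nondecreasing in $l$ because $c_n\ge 0$. The claim to establish by strong induction is
\begin{equation*}
a_l + \tau\sum_{n=0}^{l} b_n \le \exp\Bigl(\tau\sum_{n=0}^{l-1} d_n\Bigr) S_l .
\end{equation*}
Since $b_n\ge 0$, it suffices to control $a_l$ through the hypothesis and then add back the $b$-sum.

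\textbf{Reduction to a scalar recursion.} Define $E_l := \tau\sum_{n=0}^{l-1} d_n a_n$. The hypothesis gives $a_l \le E_l + S_l$ for all $l\ge1$; the index $l=0$ is not covered and has to be handled separately. Then
\begin{equation*}
E_{l+1} = E_l + \tau d_l a_l \le E_l + \tau d_l (E_l + S_l) = (1+\tau d_l)E_l + \tau d_l S_l .
\end{equation*}
Iterating, and using the monotonicity of $S$ to replace each $S_j$ by $S_l$, gives
\begin{equation*}
E_l + S_l \le S_l \prod_{j<l}(1+\tau d_j).
\end{equation*}
One checks this identity-style bound through the telescoping relation
\begin{equation*}
\prod_{j<l}(1+\tau d_j) - 1 = \sum_{k<l} \tau d_k \prod_{k<j<l}(1+\tau d_j).
\end{equation*}
Finally $1+x\le e^{x}$ yields $\prod(1+\tau d_j)\le \exp(\tau\sum d_j)$. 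Plugging this into the full hypothesis, with the $b$-sum kept on the left-hand side, gives the stated inequality.

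\textbf{Main obstacle: the term $a_0$.} The delicate point is the $n=0$ term of $E_l$, since $a_0$ is not bounded by the hypothesis (which holds only for $l\ge1$). I would first check whether the hypothesis can be read at $l=0$, where the empty sum makes it $a_0+\tau b_0\le \tau c_0+H = S_0$; in the source \cite{john2016finite} this is the intended convention. If it is not available, I would absorb $\tau d_0 a_0$ into $H$, i.e.\ replace $H$ by $H+\tau d_0 a_0$. In the paper's applications this is harmless, because $a_0$ is an initial error that is already included in $H$.

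An alternative cleaner route is to compare with the solution of the equality recursion $\tilde E_{l+1}=(1+\tau d_l)\tilde E_l+\tau d_l S_l$ and argue by monotonicity. Everything else is routine bookkeeping.
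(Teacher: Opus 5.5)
The paper does not prove this lemma; it only cites it from John's monograph. Your argument is the standard proof and the bookkeeping is right. You set up the recursion $E_{l+1}\le(1+\tau d_l)E_l+\tau d_l S_l$ for $E_l=\tau\sum_{n<l}d_na_n$. For a fixed $L$, induction gives $E_m\le S_L\bigl(\prod_{j<m}(1+\tau d_j)-1\bigr)$ for all $m\le L$, using $S_m\le S_L$. Then $1+x\le e^x$, together with the hypothesis at $l=L$ (keeping the $b$-sum on the left), yields the stated bound. Your telescoping identity with $\prod_{k<j<l}$ is also correct.

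The one thing you should state more firmly is the $a_0$ issue. It is not merely delicate: the lemma as literally written, with the hypothesis only for $l\ge1$, is false. Take $\tau=1$, $H=0$, $b_n=c_n=0$ for all $n$, $d_0=1$, $d_n=0$ for $n\ge1$, $a_0=a_1=1$, and $a_n=0$ for $n\ge2$. The hypothesis holds for every $l\ge1$, since it reads $1\le1$ at $l=1$ and $0\le1$ afterwards. Yet the conclusion at $l=1$ reads $1\le e\cdot 0=0$.

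So your first option is a necessary additional assumption, not an optional reading convention. That option imposes the hypothesis also at $l=0$, i.e. $a_0+\tau b_0\le\tau c_0+H$, which is how Heywood and Rannacher originally formulate the result (for all indices $\ge0$). With it, your proof is complete.

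Your second option, replacing $H$ by $H+\tau d_0a_0$, is a valid argument, but it proves a different and weaker inequality rather than the stated one. For the paper's purposes this makes no difference: in the error analysis $e_u^0=0$ and $e_p^0=0$, so $a_0=0$ and the $l=0$ condition holds trivially.
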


	\section{Second-order Crank-Nicolson ACM scheme  with grad-div stabilization}\label{sect:acmcn}

	%Introduce $\varepsilon$ as a tuning parameter and $\mu$ as the stabilization parameter. 
	Define $\overline{\phi}^{n-1/2}:=\frac{\phi^n+\phi^{n-1}}{2}$. We propose the following second-order Crank-Nicolson (ACM-CN) scheme as: given  $(\bu_h^{n-1}, p_h^{n-1})$, find $(\bu_h^n, p_h^n)\in \bX_h\times M_h$, such that
			\begin{subequations}\label{acm:gdiv:eq}
		\begin{empheq}[left=\empheqlbrace]{align}
			&\textstyle
			\left(\frac{\bu_h^n-\bu_h^{n-1}}{\tau}, \bv_h\right)+\nu\left(\nabla \overline{\bu}_h^{n-1/2}, \nabla \bv_h\right) +\mu\left(\nabla\cdot\overline{\bu}_h^{n-1/2}, \nabla\cdot\bv_h\right) +c\left(\overline{\bu}_h^{n-1 / 2}, \overline{\bu}_h^{n-1/2}, \bv_h\right)\notag \\
			& \qquad\qquad\qquad\qquad\qquad\qquad\qquad\qquad
			\textstyle
			-\left(  \overline{p}_h^{n-1/2}, \nabla \cdot \bv_h\right)=\left(f^{n-1 / 2}, \bv_h\right),  \forall\ \bv_h\in \bX_h,\label{CN:mom:eq}\\
			& \textstyle
			\varepsilon \left( \frac{p_h^n-p_h^{n-1}}{\tau}, q_h\right) + \left( \nabla\cdot\overline{\bu}_h^{n-1 / 2}, q_h\right)=0, \forall \ q_h\in M_h,\label{CN:press:eq}
		\end{empheq}
	\end{subequations}
	where the nonlinear term $\textstyle c\left(\overline{\bu}_h^{n-1 / 2}, \overline{\bu}_h^{n-1/2}, \bv_h\right)=\frac12 \left((\overline{\bu}_h^{n-1 / 2}\cdot\nabla) \overline{\bu}_h^{n-1/2}, \bv_h\right) - \frac12\left( (\overline{\bu}_h^{n-1 / 2}\cdot\nabla)\bv_h,  \overline{\bu}_h^{n-1/2}\right)$ is the skew-symmetric form.  We can see that \eqref{acm:gdiv:eq} is the velocity-pressure coupled system. To boost the benifit of ACM method, we can decouple the velocity and pressure by the following transformation. First of all, \eqref{CN:press:eq} can be reformulated as
	\begin{equation*}
	\textstyle 	\left(  \overline{p}_h^{n-1/2}, q_h\right) = \left( p_h^{n-1}, q_h\right) - \frac{\tau}{2\varepsilon}\left( \nabla\cdot \overline{\bu}_h^{n-1 / 2}, q_h\right).
	\end{equation*}
Since we employ the SV element is chosen, the above equation can be reformulated as
	\begin{equation*}
\textstyle	\left(  \overline{p}_h^{n-1/2}, \nabla\cdot\bv_h\right) = \left( p_h^{n-1}, \nabla\cdot\bv_h\right) - \frac{\tau}{2\varepsilon}\left( \nabla\cdot \overline{\bu}_h^{n-1 / 2}, \nabla\cdot\bv_h\right).
\end{equation*}
Substituting the above equation into \eqref{CN:mom:eq}, CN-ACM scheme \eqref{acm:gdiv:eq} is changed to
	\begin{subequations}\label{acm:gdiv}
	\begin{empheq}[left=\empheqlbrace]{align}
		&\textstyle
		\left(\frac{\bu_h^n-\bu_h^{n-1}}{\tau}, \bv_h\right)+\nu\left(\nabla \overline{\bu}_h^{n-1/2}, \nabla \bv_h\right) +\mu\left(\nabla\cdot\overline{\bu}_h^{n-1/2}, \nabla\cdot\bv_h\right)+c\left(\overline{\bu}_h^{n-1 / 2}, \overline{\bu}_h^{n-1/2}, \bv_h\right)\notag \\
		& \qquad\qquad\qquad
		\textstyle
		+  \frac{\tau}{2\varepsilon}  \left(\nabla \cdot \overline{\bu}_h^{n-1/2}, \nabla \cdot \bv_h\right)
		-\left( p_h^{n-1}, \nabla \cdot \bv_h\right)=\left(f^{n-1 / 2}, \bv_h\right),  \forall\ \bv_h\in \bX_h,\label{CN:mom}\\
		& \textstyle
		\varepsilon \left( \frac{p_h^n-p_h^{n-1}}{\tau}, q_h\right) + \left( \nabla\cdot\overline{\bu}_h^{n-1 / 2}, q_h\right)=0, \forall \ q_h\in M_h.\label{CN:press}
	\end{empheq}
\end{subequations}
In \eqref{acm:gdiv}, we can see that the velocity and pressure are decoupled.  In practice, scheme \eqref{acm:gdiv} is implemented to achieve velocity-pressure decoupling, while the equivalent formulation \eqref{acm:gdiv:eq} is utilized for the theoretical error analysis. We first establish the energy stability of the CN-ACM scheme  \eqref{acm:gdiv:eq}.
\begin{lem}
	Given $\tau\leq 1$, the CN-ACM scheme \eqref{acm:gdiv:eq} satisfies the following estimate
	\begin{equation*}
	\Vert \bu_h^n\Vert^2   + \varepsilon  \Vert p_h^n \Vert^2   +2\tau \nu \sum\limits_{i=1}^n \Vert \nabla \overline{\bu}_h^{i-1/2}\Vert^2 +2\tau\mu\sum\limits_{i=1}^n \Vert \nabla\cdot\overline{\bu}_h^{i-1/2}\Vert^2 
	\textstyle \leq  2 \exp(T)\left(\Vert \bu_h^{0}\Vert^2 + \varepsilon \Vert p_h^{0}\Vert^2 + 2\tau  \sum\limits_{i=1}^n\Vert f^{i-1 / 2} \Vert^2\right).
	\end{equation*} 
\end{lem}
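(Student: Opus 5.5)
The plan is a standard energy argument on the coupled form \eqref{acm:gdiv:eq}. Take $\bv_h = 2\tau\,\overline{\bu}_h^{n-1/2}$ in \eqref{CN:mom:eq} and $q_h = 2\tau\,\overline{p}_h^{n-1/2}$ in \eqref{CN:press:eq}, then add the two identities. Several simplifications follow.

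The first is the telescoping of the time differences:
\begin{equation*}
2\tau\Bigl(\tfrac{\bu_h^n-\bu_h^{n-1}}{\tau},\overline{\bu}_h^{n-1/2}\Bigr)=\|\bu_h^n\|^2-\|\bu_h^{n-1}\|^2 ,
\end{equation*}
and analogously the pressure term gives $\varepsilon(\|p_h^n\|^2-\|p_h^{n-1}\|^2)$.

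The second is that the skew-symmetric form satisfies $c(\bw,\bw,\bw)=0$, so the nonlinear term drops out exactly.

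The third is that the pressure coupling terms $-2\tau(\overline{p}_h^{n-1/2},\nabla\cdot\overline{\bu}_h^{n-1/2})$ and $+2\tau(\nabla\cdot\overline{\bu}_h^{n-1/2},\overline{p}_h^{n-1/2})$ cancel.

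What remains is
\begin{equation*}
\|\bu_h^n\|^2-\|\bu_h^{n-1}\|^2+\varepsilon\bigl(\|p_h^n\|^2-\|p_h^{n-1}\|^2\bigr)+2\tau\nu\|\nabla\overline{\bu}_h^{n-1/2}\|^2+2\tau\mu\|\nabla\cdot\overline{\bu}_h^{n-1/2}\|^2=2\tau\bigl(f^{n-1/2},\overline{\bu}_h^{n-1/2}\bigr).
\end{equation*}

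The forcing term is where care is needed. The constant must not depend on $\nu^{-1}$, so we cannot absorb it into the viscous term via Poincaré. Instead, use Cauchy--Schwarz and Young:
\begin{equation*}
2\tau(f^{n-1/2},\overline{\bu}_h^{n-1/2})\le \tau\|f^{n-1/2}\|^2+\tau\|\overline{\bu}_h^{n-1/2}\|^2\le \tau\|f^{n-1/2}\|^2+\tfrac{\tau}{2}\bigl(\|\bu_h^n\|^2+\|\bu_h^{n-1}\|^2\bigr).
\end{equation*}
Summing from $i=1$ to $n$ then gives
\begin{equation*}
(1-\tfrac{\tau}{2})\|\bu_h^n\|^2+\varepsilon\|p_h^n\|^2+2\tau\nu\sum_{i=1}^n\|\nabla\overline{\bu}_h^{i-1/2}\|^2+2\tau\mu\sum_{i=1}^n\|\nabla\cdot\overline{\bu}_h^{i-1/2}\|^2\le \|\bu_h^0\|^2+\varepsilon\|p_h^0\|^2+\tau\sum_{i=1}^n\|f^{i-1/2}\|^2+\tau\sum_{i=0}^{n-1}\|\bu_h^i\|^2 .
\end{equation*}
Here the $\tfrac{\tau}{2}\|\bu_h^0\|^2$ contribution is absorbed, using $\tau\le1$.

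The main obstacle is the $\|\bu_h^n\|^2$ term on the right at level $n$, which Lemma~\ref{gronwall1} does not permit. Since $\tau\le 1$, we have $1-\tau/2\ge 1/2$, so we multiply the whole inequality by $2$. This gives $a_n+\tau\sum b_i\le \tau\sum_{i=0}^{n-1}2a_i+H$ with
\begin{equation*}
a_n=\|\bu_h^n\|^2+\varepsilon\|p_h^n\|^2,\qquad H=2\bigl(\|\bu_h^0\|^2+\varepsilon\|p_h^0\|^2+\tau\textstyle\sum_{i=1}^n\|f^{i-1/2}\|^2\bigr).
\end{equation*}
To the left-hand side we may also add the dissipative sums; after the multiplication by $2$ they appear with coefficients $4\tau\nu$ and $4\tau\mu$, which dominate those in the statement.

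Applying Lemma~\ref{gronwall1} with $d_i=2$ then yields the bound with prefactor $\exp(2t_n)$, whereas the statement has $\exp(T)$. I would try to recover exactly $\exp(T)$ by weighting Young's inequality more carefully. A cleaner route is to apply Lemma~\ref{gronwall1} directly to the summed inequality without the factor $2$ in front of the Gronwall sum: keep $\tau\sum_{i=0}^{n-1}a_i$ with $d_i=1$, and use $(1-\tau/2)^{-1}\le 2$ only on the final prefactor. That gives $\exp(n\tau)\le\exp(T)$, times $2$, times the data, with $2\tau\sum\|f\|^2$ as stated. Getting the constants to match precisely in this bookkeeping is the only delicate point. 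The resulting constant is independent of $\nu^{-1}$ and $\varepsilon^{-1}$, as required.
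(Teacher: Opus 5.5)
Up to the summed inequality your argument matches the paper's: the same test functions, skew-symmetry, cancellation of the pressure coupling, Young on the forcing, and then Lemma~\ref{gronwall1}. The bound you actually establish, $2\exp(2t_n)\bigl(\Vert\bu_h^0\Vert^2+\varepsilon\Vert p_h^0\Vert^2+\tau\sum_{i=1}^n\Vert f^{i-1/2}\Vert^2\bigr)$, is correct. It does not imply the statement, however: take $f=0$ and $t_n>T/2$.

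The ``cleaner route'' you propose for reaching $\exp(T)$ fails. Lemma~\ref{gronwall1} needs the same quantity on the left as inside the right-hand sum. Your left side carries $(1-\tfrac{\tau}{2})\Vert\bu_h^n\Vert^2$, so to use the lemma you must do one of two things:
\begin{itemize}
\item take $a_n=(1-\tfrac{\tau}{2})\Vert\bu_h^n\Vert^2+\varepsilon\Vert p_h^n\Vert^2$, which forces $d_i=(1-\tfrac{\tau}{2})^{-1}>1$ and gives the exponent $t_n/(1-\tfrac{\tau}{2})$, exceeding $T$ at $t_n=T$; or
\item multiply by $2$, which gives $d_i=2$.
\end{itemize}
You cannot run the lemma with $d_i=1$ and charge the factor $(1-\tfrac{\tau}{2})^{-1}$ only at the end. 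In fact no bookkeeping on your summed inequality alone can produce $\exp(T)$. A sequence attaining equality in it grows by the factor $(1+\tfrac{\tau}{2})/(1-\tfrac{\tau}{2})>e^{\tau}$ per step. For instance, with $\tau=1$, $f=0$, and the pressure and dissipation terms dropped, equality gives $x_5=324\,x_0>2e^{5}x_0$.

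The missing ingredient is the reweighting of Young's inequality that you mention but do not carry out. This is exactly the paper's choice,
\[
(f^{n-1/2},\overline{\bu}_h^{n-1/2})\le\Vert f^{n-1/2}\Vert^2+\tfrac14\Vert\overline{\bu}_h^{n-1/2}\Vert^2 ,
\]
and it is the source of the $2\tau\sum\Vert f^{i-1/2}\Vert^2$ in the statement. Let $D_n$ denote the two dissipation sums with weights $2\tau\nu$ and $2\tau\mu$. With this weighting the summed inequality becomes
\[
\Bigl(1-\frac{\tau}{4}\Bigr)\Vert\bu_h^n\Vert^2+\varepsilon\Vert p_h^n\Vert^2+D_n\le\Vert\bu_h^0\Vert^2+\varepsilon\Vert p_h^0\Vert^2+2\tau\sum_{i=1}^n\Vert f^{i-1/2}\Vert^2+\frac{\tau}{2}\sum_{i=0}^{n-1}\Vert\bu_h^i\Vert^2 .
\]
Multiplying by $2$ (using $\tau\le1$) gives the hypothesis of Lemma~\ref{gronwall1} with:
\begin{itemize}
\item $a_i=\Vert\bu_h^i\Vert^2+\varepsilon\Vert p_h^i\Vert^2$ and $d_i=1$;
\item dissipation weights $4\tau\nu$ and $4\tau\mu$;
\item $H=2\bigl(\Vert\bu_h^0\Vert^2+\varepsilon\Vert p_h^0\Vert^2+2\tau\sum_{i=1}^n\Vert f^{i-1/2}\Vert^2\bigr)$.
\end{itemize}
The lemma then yields $\exp(t_n)H\le 2\exp(T)(\cdots)$, which is precisely the claimed estimate.
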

\begin{proof}
	Let $\bv_h=\overline{\bu}_h^{n-1/2}$ in \eqref{CN:mom:eq} and $q_h=\overline{p}_h^{n-1/2}$ in \eqref{CN:press:eq}, we have
	\begin{equation*}
		\textstyle
		 \frac{\Vert \bu_h^n\Vert^2 - \Vert \bu_h^{n-1}\Vert^2}{2\tau} + \varepsilon  \frac{\Vert p_h^n \Vert^2 - \Vert p_h^{n-1}\Vert^2}{2\tau} +\nu \Vert \nabla \overline{\bu}_h^{n-1/2}\Vert^2 +\mu\Vert \nabla\cdot\overline{\bu}_h^{n-1/2}\Vert^2=\left(f^{n-1 / 2}, \overline{\bu}_h^{n-1/2}\right)\leq \Vert f^{n-1 / 2} \Vert^2 + \frac14 \Vert \overline{\bu}_h^{n-1/2}\Vert^2.
	\end{equation*}
	Changing the index $n$ to $i$, summing over $i$ from $1$ to $n$ and multiplying $2\tau$ on both sides, it yields
	 \begin{equation*}
	 	\begin{aligned}
	 	\textstyle
	 	 \Vert \bu_h^n\Vert^2 - \Vert \bu_h^{0}\Vert^2 + \varepsilon  \Vert p_h^n \Vert^2 - \varepsilon \Vert p_h^{0}\Vert^2  +2\tau \nu \sum\limits_{i=1}^n \Vert \nabla \overline{\bu}_h^{i-1/2}\Vert^2 +2\tau\mu\sum\limits_{i=1}^n \Vert \nabla\cdot\overline{\bu}_h^{i-1/2}\Vert^2\\
	 	 \textstyle \leq 2\tau  \sum\limits_{i=1}^n\Vert f^{i-1 / 2} \Vert^2 + \frac12 \sum\limits_{i=1}^n \tau \Vert \bu_h^i\Vert^2.
   \end{aligned}
	 \end{equation*}
	 By Gr\"onwall's inequality, we can have
	 	 \begin{equation*}
	 	\begin{aligned}
	 		\textstyle
	 		\Vert \bu_h^n\Vert^2   + \varepsilon  \Vert p_h^n \Vert^2   +2\tau \nu \sum\limits_{i=1}^n \Vert \nabla \overline{\bu}_h^{i-1/2}\Vert^2 +2\tau\mu\sum\limits_{i=1}^n \Vert \nabla\cdot\overline{\bu}_h^{i-1/2}\Vert^2 
	 		\textstyle \leq  2 \exp(T)\left(\Vert \bu_h^{0}\Vert^2 + \varepsilon \Vert p_h^{0}\Vert^2 + 2\tau  \sum\limits_{i=1}^n\Vert f^{i-1 / 2} \Vert^2\right).
	 	\end{aligned}
	 \end{equation*}
\end{proof}
\begin{remark}
	Alternatively, one could consider the following linearized CN-ACM scheme. For $n\geq 2$,  given $(\bu_h^{n-1}, p_h^{n-1})$, find $(\bu_h^n, p_h^n)\in \bX_h\times M_h$, such that 
		\begin{subequations}\label{acm:gdiv:lin}
		\begin{empheq}[left=\empheqlbrace]{align}
			&\textstyle
			\left(\frac{\bu_h^n-\bu_h^{n-1}}{\tau}, \bv_h\right)+\nu\left(\nabla \overline{\bu}_h^{n-1/2}, \nabla \bv_h\right) +\mu\left(\nabla\cdot\overline{\bu}_h^{n-1/2}, \nabla\cdot\bv_h\right)+c\left(\widetilde{\bu}_h^{n-1 / 2}, \overline{\bu}_h^{n-1/2}, \bv_h\right)\notag \\
			& \qquad\qquad\qquad
			\textstyle
			+  \frac{\tau}{2\varepsilon}  \left(\nabla \cdot \overline{\bu}_h^{n-1/2}, \nabla \cdot \bv_h\right)
			-\left( p_h^{n-1}, \nabla \cdot \bv_h\right)=\left(f^{n-1 / 2}, \bv_h\right),  \forall\ \bv_h\in \bX_h,\label{CN:mom:lin}\\
			& \textstyle
			\varepsilon \left( \frac{p_h^n-p_h^{n-1}}{\tau}, q_h\right) + \left( \nabla\cdot\overline{\bu}_h^{n-1 / 2}, q_h\right)=0, \forall \ q_h\in M_h,\label{CN:press:lin}
		\end{empheq}
	\end{subequations}
	where $ \widetilde{\bu}_h^{n-1 / 2} = \frac32 \bu_h^{n-1}-\frac12\bu_h^{n-2}$ is the extrapolation at $t=t_{n-1/2}$. As for the first step, we can consider the following backward Euler (BE-ACM) scheme: given $(\bu_h^0, p_h^0)$, find $(\bu_h^1, p_h^1)\in \bX_h\times M_h$, such that 
	\begin{subequations}\label{acm:gdiv:lin:be}
		\begin{empheq}[left=\empheqlbrace]{align}
			&\textstyle
			\left(\frac{\bu_h^1-\bu_h^{0}}{\tau}, \bv_h\right)+\nu\left(\nabla \bu_h^{1}, \nabla \bv_h\right) +\mu\left(\nabla\cdot{\bu}_h^{1}, \nabla\cdot\bv_h\right)+c\left({\bu}_h^{0}, {\bu}_h^{1}, \bv_h\right)\notag \\
			& \qquad\qquad\qquad
			\textstyle
			+  \frac{\tau}{2\varepsilon}  \left(\nabla \cdot {\bu}_h^{1}, \nabla \cdot \bv_h\right)
			-\left( p_h^{0}, \nabla \cdot \bv_h\right)=\left(f^{1}, \bv_h\right),  \forall\ \bv_h\in \bX_h,\label{BE:mom:lin}\\
			& \textstyle
			\varepsilon \left( \frac{p_h^1-p_h^{0}}{\tau}, q_h\right) + \left( \nabla\cdot{\bu}_h^{1}, q_h\right)=0, \forall \ q_h\in M_h.\label{BE:press:lin}
		\end{empheq}
	\end{subequations}
	The analysis of such a linearized scheme necessitates separate estimates for both the CN-ACM and BE-ACM formulations. To maintain simplicity, we restrict our attention to the CN-ACM schemes \eqref{acm:gdiv:eq} and \eqref{acm:gdiv}. Furthermore, we include a remark to highlight the differences in estimating the nonlinear terms when linearization is employed.
\end{remark}

	\section{Reynolds number robust error estimate on CN-ACM scheme}\label{sect:rey-robust}
In order to show the error estimate on the velocity and pressure, we need the following regularities on the real solution $(\bu(t), p(t))$ of \eqref{NS:orig}. That is
	\begin{equation}\label{2nd:regularity}
	\left\{
	\begin{aligned}
		&\bu\in L^{\infty}(0,T;(H^1_0(\Omega))^d\cap (H^3(\Omega))^d),\bu_t\in L^{\infty}(0,T;(H^3(\Omega))^d),\\
		&\;\bu_{tt}\in L^{\infty}(0,T;(L^2(\Omega))^d), \;\bu_{ttt} \in L^2(0,T;(H^1(\Omega))^d),\\
		&p\in L^{\infty}(0,T;H^2(\Omega)). 
	\end{aligned}
	\right.
\end{equation}
Define $e_u^n:= {\rm P}_{\text{st}}\bu(t_n) - \bu_h^n$ and $e_p^n:={\rm Q}_{\text{st}}p(t_n) - p_h^n$. Introduce $\mathcal{C}(t_n)$ as
	\begin{equation*}
		\begin{aligned}
			\mathcal{C}(t_n):= &~
				\frac{1}{80} \Vert \bu_{ttt}\Vert_{L^2(0, t_n;L^2)}^2 +  16C_{\text{st}}^2   \Vert \bu_t\Vert_{L^2(0, t_n;H^3)}^2\\
			&\textstyle ~  + \frac{ C_{\text{st}}^2}{3} \Vert \bu_{tt}\Vert_{L^2(0, t_n;H^1)}^2  + \frac{ C_{\text{st}}^2  }{3\mu } \Vert p_{tt}\Vert_{L^2(0, t_n;L^2)}^2
			+\frac{2 }{3}C_{\text{em}}^4  \Vert \bu_{tt}\Vert_{L^2(0,t_n;H^1)}^2\Vert \bu \Vert_{L^\infty(0,T;H^2)}^2
			\\
			&\textstyle ~ 
			+  \frac{4 }{3} C_{\text{Pi}}^2 C_{\text{st}}^2 \Vert \nabla \bu\Vert_{L^\infty(0,T;L^\infty)}^2   \Vert \bu_{tt}\Vert_{L^2(0,t_n;H^1)}^2
			+  16 t_nC_{\text{em}}^4 C_{\text{st}}^2  \Vert \bu \Vert_{L^\infty(0,T;H^3)}^2\Vert \bu \Vert_{L^\infty(0,T;H^2)}^2	\\
			&\textstyle~ 
			+ 32 t_n  C_{\text{Pi}}^2 C_{\text{st}}^4 \Vert \nabla \bu \Vert_{L^\infty(0,T;L^\infty)}^2 \Vert \bu \Vert_{L^\infty(0,T;H^3)}^2
			+ 
			\frac{\varepsilon }{160} \Vert p_{ttt}\Vert_{L^2(0, t_n;L^2)}^2 +  8 \varepsilon C_{\text{st}}^2  \Vert p_t\Vert_{L^2(0, t_n;H^2)}^2 			\\
			&\textstyle ~
			+ 4\tau\sum\limits_{i=1}^n \varepsilon C_0^2  \Vert  \frac{\partial p(t_{i-1/2})}{\partial t} \Vert_2^2  
			+  2 \tau\sum\limits_{i=1}^n  \left(4 \left( \nu+\mu\right)+\frac34 \right)  C_{\text{inf}}^2\Vert \frac{\partial p(t_{i-1/2})}{\partial t} \Vert^2 \\
			&\textstyle  ~   +   4\tau\sum\limits_{i=1}^n C_{\text{Pi}}^2C_{\text{inf}}^2 \left(\frac12 +   C_{\text{st}} \Vert \nabla \bu \Vert_{L^\infty(0,T;L^\infty)} +\frac{1}{2\mu} C_{\text{Pi}}^2 C_{\text{st}}^2\Vert \bu \Vert_{L^\infty(0,T;L^\infty)}  \right)  \Vert  \frac{\partial p(t_{i-1/2})}{\partial t} \Vert^2 .
		\end{aligned}
	\end{equation*}
	Then, we have the following Reynolds number and perturbation parameter $\varepsilon$ robust estimate.
	\begin{thm}\label{op:cn:h1} Given $(\bu(t), p(t_n))$ is the solution to \eqref{NS:orig} satisfying the regulrities \eqref{2nd:regularity} and $(\bu_h^n, p_h^n)$ is the solution to CN-ACM scheme \eqref{acm:gdiv:eq}. There exists $\tau_1>0$, when $\tau\leq \min\{\tau_1, 1\}$ and $h\leq 1$, then following estimate holds
		\begin{equation*}
			\begin{aligned}
				&\textstyle	 \Vert e_u^n \Vert^2 + \varepsilon   \Vert e_p^n \Vert^2   + \nu \tau\sum\limits_{i=1}^n \Vert \nabla\overline{e}_u^{i-1/2} \Vert^2 + \mu  \tau\sum\limits_{i=1}^n \Vert \nabla\cdot \overline{e}_u^{i-1/2} \Vert^2\\
				&\leq ~ \textstyle
				\exp\left( t_n\left(\frac52+  4 C_{\text{st}} \Vert \nabla \bu \Vert_{L^\infty(0,T;L^\infty)} +\frac{2}{ \mu} C_{\text{Pi}}^2C_{\text{st}}^2\Vert \nabla\bu \Vert_{L^\infty(0,T;L^\infty)} \right) \right) \mathcal{C}(t_n)(\tau^4 + h^4 + \varepsilon^2).
			\end{aligned}
		\end{equation*}
	\end{thm}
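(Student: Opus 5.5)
We will use a standard energy argument for the error equations of the coupled form \eqref{acm:gdiv:eq}, with the Stokes projection $(\mathrm{P}_{\mathrm{st}},\mathrm{Q}_{\mathrm{st}})$ as comparison function.

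\textbf{Error equations.} First evaluate the continuous problem \eqref{wkf:orig:stab} at $t_{n-1/2}$. Write it in Crank--Nicolson form by adding and subtracting $\frac{\bu(t_n)-\bu(t_{n-1})}{\tau}$ and the averages $\overline{\bu}^{n-1/2}$, $\overline{p}^{n-1/2}$. For the continuity equation, observe that the exact solution satisfies
\begin{equation*}
\varepsilon\Bigl(\tfrac{p(t_n)-p(t_{n-1})}{\tau},q_h\Bigr)+(\nabla\cdot\overline{\bu}^{n-1/2},q_h)=\varepsilon\Bigl(\tfrac{p(t_n)-p(t_{n-1})}{\tau},q_h\Bigr)+(\nabla\cdot(\overline{\bu}^{n-1/2}-\bu(t_{n-1/2})),q_h).
\end{equation*}
The term $\varepsilon\,\partial_t p$ is then an $O(\varepsilon)$ consistency defect, which will produce the $\varepsilon^2$ in the bound.

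Next insert $\mathrm{P}_{\mathrm{st}}\bu$ and $\mathrm{Q}_{\mathrm{st}}p$. By \eqref{stokes:proj}, the $\nu$-term and the pressure term of the projection error cancel. What remains in the projection error is:
\begin{itemize}
\item the time-derivative term $\bigl((I-\mathrm{P}_{\mathrm{st}})\tfrac{\bu^n-\bu^{n-1}}{\tau},\bv_h\bigr)$, which is controlled by $h^3\|\bu_t\|_{L^2(H^3)}$;
\item the grad-div term $\mu(\nabla\cdot(\bu-\mathrm{P}_{\mathrm{st}}\bu),\nabla\cdot\bv_h)$;
\item the term $\varepsilon\bigl((I-\mathrm{Q}_{\mathrm{st}})\tfrac{p^n-p^{n-1}}{\tau},q_h\bigr)$.
\end{itemize}
Subtracting \eqref{acm:gdiv:eq} gives equations for $(e_u^n,e_p^n)$ with truncation residuals $\eta^n$, controlled by Lemma \ref{lemp1}, and the nonlinear difference $c(\overline{\bu}^{n-1/2},\overline{\bu}^{n-1/2},\bv_h)-c(\overline{\bu}_h^{n-1/2},\overline{\bu}_h^{n-1/2},\bv_h)$.

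\textbf{Energy identity.} Test with $\bv_h=\overline{e}_u^{n-1/2}$ and $q_h=\overline{e}_p^{n-1/2}$. The pressure coupling terms cancel exactly, as in the stability lemma. This yields
\begin{equation*}
\tfrac{\|e_u^n\|^2-\|e_u^{n-1}\|^2}{2\tau}+\varepsilon\tfrac{\|e_p^n\|^2-\|e_p^{n-1}\|^2}{2\tau}+\nu\|\nabla\overline{e}_u^{n-1/2}\|^2+\mu\|\nabla\cdot\overline{e}_u^{n-1/2}\|^2,
\end{equation*}
which is bounded by the residual and nonlinear terms.

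\textbf{Handling the $\varepsilon$ term robustly.} The residual $(\varepsilon\,\partial_t p(t_{n-1/2}),\overline{e}_p^{n-1/2})$ must not be handled by Young's inequality against $\varepsilon\|e_p\|^2$ alone, since that only yields $\varepsilon\|\partial_t p\|^2$ with an $\varepsilon\|e_p\|^2$ factor that does not close. Instead we trade the pressure error for velocity quantities through the inf-sup condition \eqref{dis:infsup}. Choose $\bv_h$ with $\nabla\cdot\bv_h$ equal to the $M_h$-projection of $\partial_t p(t_{n-1/2})$ and $\|\bv_h\|_1\le C_{\mathrm{inf}}\|\partial_t p\|$. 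Testing the momentum error equation with $\varepsilon\bv_h$ converts $\varepsilon(\overline{e}_p,\nabla\cdot\bv_h)$ into the terms
\begin{equation*}
\varepsilon\Bigl[(\tfrac{e_u^n-e_u^{n-1}}{\tau},\bv_h),\quad \nu(\nabla\overline{e}_u,\nabla\bv_h),\quad \mu(\nabla\cdot\overline{e}_u,\nabla\cdot\bv_h),\quad \text{nonlinear}\Bigr].
\end{equation*}
These are bounded by $\varepsilon^2 C_{\mathrm{inf}}^2(\nu+\mu+\cdots)\|\partial_t p\|^2$ plus absorbable velocity-error terms. This is where the factors $C_{\mathrm{inf}}^2\|\partial_t p(t_{i-1/2})\|^2$ in $\mathcal{C}(t_n)$ originate. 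The time-difference term is summed by parts in time, leaving $\|\bu_{tt}\|$-type data and boundary terms $\varepsilon\|e_u^n\|\,\|\bv_h\|$. The remaining $\varepsilon(I-\mathrm{Q}_0)\partial_t p$ piece contributes $\varepsilon C_0^2\|\partial_tp\|_2^2$.

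\textbf{Nonlinear term (main obstacle).} We split the nonlinear difference as
\begin{equation*}
c(\overline{\bu},\overline{\bu},\cdot)-c(\mathrm{P}_{\mathrm{st}}\overline{\bu},\mathrm{P}_{\mathrm{st}}\overline{\bu},\cdot)+c(\overline{e}_u,\mathrm{P}_{\mathrm{st}}\overline{\bu},\cdot)+c(\mathrm{P}_{\mathrm{st}}\overline{\bu},\overline{e}_u,\cdot)-c(\overline{e}_u,\overline{e}_u,\cdot).
\end{equation*}
\begin{itemize}
\item The last two terms vanish by skew-symmetry when tested with $\overline{e}_u$.
\item For $c(\overline{e}_u,\mathrm{P}_{\mathrm{st}}\overline{\bu},\overline{e}_u)$, write it as a convective term plus $\frac12(\nabla\cdot\overline{e}_u\,\cdot,\cdot)$. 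The convective part is bounded by $C_{\mathrm{st}}\|\nabla\bu\|_{L^\infty}\|\overline{e}_u\|^2$ using Lemma \ref{stokes_stability} with $r=\infty$. The divergence part is bounded by $\frac{\mu}{4}\|\nabla\cdot\overline{e}_u\|^2+\frac{C}{\mu}\|\bu\|_{L^\infty}^2\|\overline{e}_u\|^2$, via Poincar\'e and $\|\mathrm{P}_{\mathrm{st}}\bu\|_{L^\infty}$.
\item The projection-consistency part uses \eqref{embed:sob} with $h^2$ estimates, together with $\|\overline{\bu}-\bu(t_{n-1/2})\|_1$ from \eqref{ConsistencyError2}.
\end{itemize}
This gives the $\nu$-free Gr\"onwall rate $\frac52+4C_{\mathrm{st}}\|\nabla\bu\|_{L^\infty}+\frac{2}{\mu}C_{\mathrm{Pi}}^2C_{\mathrm{st}}^2\|\nabla\bu\|_{L^\infty}$.

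\textbf{Conclusion.} Multiply by $2\tau$ and sum. For $\tau\le\tau_1$ small, the implicit $\|e_u^n\|^2$ appearing on the right can be absorbed. Then Lemma \ref{gronwall1} yields the stated bound with $\mathcal{C}(t_n)(\tau^4+h^4+\varepsilon^2)$, noting $e_u^0=0$ and $e_p^0=0$ for the projected initial data. The delicate point is verifying that every $\varepsilon$-pressure term is closed via the inf-sup construction without producing $\varepsilon^{-1}$.
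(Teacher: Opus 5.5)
Your plan follows the paper's proof essentially step for step:
\begin{itemize}
\item the Stokes-projection splitting with $e_u^0=e_p^0=0$;
\item testing with $(\overline{e}_u^{n-1/2},\overline{e}_p^{n-1/2})$ so that the pressure couplings cancel;
\item the $\mathbb{H}_1$--$\mathbb{H}_5$-type splitting of the convective difference, with the grad-div term absorbing the $\nabla\cdot\overline{e}_u$ piece;
\item the key device: lifting $\partial_t p(t_{n-1/2})$ through the Scott--Vogelius inf-sup condition, testing the momentum error equation with $\varepsilon\bv_h$, and summing the time-difference term by parts.
\end{itemize}
Three minor corrections. First, that summation by parts produces $\|p_{tt}\|_{L^2(0,t_n;L^2)}$ data, not $\|\bu_{tt}\|$. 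Second, it requires the lifting to be a fixed linear right inverse of the divergence, so that differences of lifts are lifts of differences. Third, $\nabla\cdot\mathrm{P}_{\mathrm{st}}\bu=0$ for this pair, so the grad-div projection residual you list is zero.

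There is one gap, shared with the paper, which only says these terms are estimated ``similar to $\mathbb{I}_i$, $i=1,\dots,5$.'' It concerns the nonlinear difference in the lifted test, $\varepsilon\bigl[c(\bu(t_{n-1/2}),\bu(t_{n-1/2}),\bv_h)-c(\overline{\bu}_h^{n-1/2},\overline{\bu}_h^{n-1/2},\bv_h)\bigr]$. The skew-symmetry cancellations you invoke hold only when the third argument is $\overline{e}_u^{n-1/2}$.

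With $\bv_h$ in the third slot, the terms linear in $\overline{e}_u$ are harmless. For example, $c(\mathrm{P}_{\mathrm{st}}\overline{\bu},\overline{e}_u,\bv_h)=-((\mathrm{P}_{\mathrm{st}}\overline{\bu}\cdot\nabla)\bv_h,\overline{e}_u)$, since $\mathrm{P}_{\mathrm{st}}\overline{\bu}$ is pointwise divergence-free. The quadratic term, however, survives:
\[
-\varepsilon c(\overline{e}_u,\overline{e}_u,\bv_h)=\tfrac{\varepsilon}{2}\bigl((\nabla\cdot\overline{e}_u)\overline{e}_u,\bv_h\bigr)+\varepsilon\bigl((\overline{e}_u\cdot\nabla)\bv_h,\overline{e}_u\bigr).
\]
With only $\|\bv_h\|_1\le C_{\mathrm{inf}}\|\partial_t p\|$, the second piece is bounded by $\varepsilon\|\overline{e}_u\|_{L^4}^2\|\nabla\bv_h\|\lesssim\varepsilon\|\overline{e}_u\|\,\|\nabla\overline{e}_u\|$. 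Closing this requires one of the following:
\begin{itemize}
\item $\nu\|\nabla\overline{e}_u\|^2$, which reintroduces $\nu^{-1}$;
\item an inverse estimate, which forces a coupling such as $\varepsilon\lesssim h$;
\item an a priori $L^\infty$ bound on the error, which is what the paper uses for the analogous term in the proof of Theorem~\ref{approximation}, under $\varepsilon\le h^{d/2}$.
\end{itemize}
So it is not an ``absorbable velocity-error term'' as you claim.

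A repair consistent with the rest of your argument is to choose the lift bounded in $W^{1,\infty}$. For instance, take a divergence-commuting Fortin interpolant of a smooth continuous right inverse applied to $\partial_t p$; this needs $\partial_t p\in H^2$ and some domain regularity, and it also removes the interpolation remainder entirely. The two pieces are then bounded by $\frac{\mu}{8}\|\nabla\cdot\overline{e}_u\|^2+C\mu^{-1}\varepsilon^2\|\bv_h\|_{L^\infty}^2\|\overline{e}_u\|^2$ and $\varepsilon\|\nabla\bv_h\|_{L^\infty}\|\overline{e}_u\|^2$. Both are $\nu$-free and Gr\"onwall-admissible, at the cost of constants that do not appear in $\mathcal{C}(t_n)$.
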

	\begin{proof}
	The error equations are given as
	\begin{subequations}\label{acm:gdiv:err:1}
		\begin{empheq}[left=\empheqlbrace]{align}
			& \textstyle
			\left(\frac{e_u^n-e_u^{n-1}}{\tau}, \bv_h\right)+\nu\left(\nabla \overline{e}_u^{n-1/2}, \nabla \bv_h\right)+c\left(\bu\left(t_{n-1/2}\right), \bu\left(t_{n-1/2}\right), \bv_h\right)
			-c\left(\overline{\bu}_h^{n-1 / 2}, \overline{\bu}_h^{n-1/2}, \bv_h\right) \notag \\
			& \quad \textstyle
			+ \mu\left(\nabla \cdot \overline{e}_u^{n-1/2}, \nabla \cdot \bv_h\right)-\left(\overline{e}_p^{n-1/2}, \nabla \cdot \bv_h\right)
			=-\left(\frac{\partial \bu\left(t_{n-1/2} \right)}{\partial t}-{\rm P}_{\text{st}} \frac{\bu\left(t_n\right)-\bu\left(t_{n-1}\right)}{\tau}, \bv_h\right)
			\notag \\
			& \textstyle \qquad\quad
			-\nu\left(\nabla{\rm P}_{\text{st}}\left( \bu\left(t_{n-1/2}\right)-\overline{\bu}\left(t_{n-1/2}\right)\right), \nabla \bv_h\right) 
			+\left( {\rm Q}_{\text{st}}\left( p\left(t_{n-1/2}\right)-\overline{p}\left(t_{n-1/2}\right) \right), \nabla \cdot \bv_h\right), \label{acm:gdiv:err}\\
			& \textstyle
			\varepsilon\left(\frac{e_p^n-e_p^{n-1}}{\tau}, q_h\right)=~\textstyle
			-\varepsilon\left(\frac{\partial p\left(t_{n-1/2}\right)}{\partial t}-{\rm Q}_{\text{st}} \frac{p\left(t_n\right) - p\left(t_{n-1}\right)}{\tau}, q_h\right)
			%-\left(\nabla \cdot\left(\bu\left(t_{n-1/2}\right)-\frac{\bu\left(t_n\right)+\bu\left(t_{n-1}\right)}{2}\right), q_h\right)
			%-\left( \nabla\cdot\left( {\rm P}_{\text{st}}\frac{\bu(t_n)+\bu(t_{n-1})}{2} - \tilde{\bu}_h^{n-1/2}\right), q_h\right)
			-\left(  \nabla\cdot\overline{e}_u^{n-1/2}, q_h\right)
			+ \varepsilon \left(\frac{\partial p(t_{n-1/2})}{\partial t}, q_h\right). \label{acm:gdiv:err:p}
		\end{empheq}
	\end{subequations}

Let $\bv_h = \overline{e}_u^{n-1/2}$ in \eqref{acm:gdiv:err} and $q_h = \overline{e}_p^{n-1/2}$ in \eqref{acm:gdiv:err:p}, then combine them,  we have
\begin{equation}\label{cn:1st}
	\begin{aligned}
		&\textstyle	\frac{\Vert e_u^n \Vert^2 - \Vert e_u^{n-1} \Vert^2}{2\tau} + \varepsilon \frac{ \Vert e_p^n \Vert^2 - \Vert  e_p^{n-1} \Vert^2}{2\tau} +\nu \Vert \nabla\overline{e}_u^{n-1/2} \Vert^2 +\mu \Vert \nabla\cdot \overline{e}_u^{n-1/2} \Vert^2\\
		& \textstyle 
		= ~ \textstyle
		-\left(\frac{\partial \bu\left(t_{n-1/2} \right)}{\partial t}-{\rm P}_{\text{st}} \frac{\bu\left(t_n\right)-\bu\left(t_{n-1}\right)}{\tau}, \overline{e}_u^{n-1/2}\right)
		-\nu\left(\nabla{\rm P}_{\text{st}} \left(\bu\left(t_{n-1/2}\right)-\overline{\bu}\left(t_{n-1/2}\right)\right), \nabla \overline{e}_u^{n-1/2} \right) \\
		&\textstyle \quad
		+\left({\rm Q}_{\text{st}} \left(  p\left(t_{n-1/2}\right)-\overline{p}\left(t_{n-1/2}\right) \right), \nabla \cdot \overline{e}_u^{n-1/2}\right) - c\left(\bu\left(t_{n-1/2}\right), \bu\left(t_{n-1/2}\right), \overline{e}_u^{n-1/2}\right)  
		\\
		&\textstyle \quad
		+ c\left(\overline{\bu}_h^{n-1 / 2}, \overline{\bu}_h^{n-1/2}, \overline{e}_u^{n-1/2}\right) -\varepsilon\left(\frac{\partial p\left(t_{n-1/2}\right)}{\partial t}-{\rm Q}_{\text{st}} \frac{p\left(t_n\right) - p\left(t_{n-1}\right)}{\tau}, \overline{e}_p^{n-1/2}\right)
		+ \varepsilon \left(\frac{\partial p(t_{n-1/2})}{\partial t}, \overline{e}_p^{n-1/2}\right)\\
		&:=\sum\limits_{i=1}^7\mathbb{I}_i.
	\end{aligned}
\end{equation}
\textbf{Part I.} In this part, we bound the first six terms $\mathbb{I}_i$ ($i=1,\dots,6$). While most of these terms can be handled via Young's inequality, the main challenge lies in establishing estimates that are robust with respect to the Reynolds number. To this end, particular care is taken to avoid introducing factors of $1/\nu$ when treating the nonlinear terms.
\begin{equation}\label{mom:vel:time}
	\begin{aligned}
		\textstyle 
		\mathbb{I}_1 &
		\textstyle 
		=~ -\left(\frac{\partial \bu\left(t_{n-1/2} \right)}{\partial t}-\frac{\bu\left(t_n\right)-\bu\left(t_{n-1}\right)}{\tau}, \overline{e}_u^{n-1/2}\right)
		-\left(\frac{\bu\left(t_n\right)-\bu\left(t_{n-1}\right)}{\tau}-{\rm P}_{\text{st}} \frac{\bu\left(t_n\right)-\bu\left(t_{n-1}\right)}{\tau}, \overline{e}_u^{n-1/2}\right)\\
		&\leq ~ \textstyle 
		\frac18\left(\Vert e_u^n\Vert^2 + \Vert  e_u^{n-1}\Vert^2\right) + 
		\frac{\tau^3}{640} \Vert \bu_{ttt}\Vert_{L^2(t_{n-1}, t_n;L^2)}^2 +  \frac{2C_{\text{st}}^2h^6 }{\tau} \Vert \bu_t\Vert_{L^2(t_{n-1}, t_n;H^3)}^2.
	\end{aligned}
\end{equation}
Similarly,
\begin{equation} 
	\begin{aligned}
		\textstyle 
		\mathbb{I}_6  
		&\leq ~ \textstyle 
		\frac{\varepsilon}{8}\left(\Vert e_p^n\Vert^2 + \Vert  e_p^{n-1}\Vert^2\right) + 
		\frac{\varepsilon\tau^3}{640} \Vert p_{ttt}\Vert_{L^2(t_{n-1}, t_n;L^2)}^2 +  \frac{2\varepsilon C_{\text{st}}^2h^4 }{\tau} \Vert p_t\Vert_{L^2(t_{n-1}, t_n;H^2)}^2.
	\end{aligned}
\end{equation}
In addition,
\begin{equation}
	\left\{
	\begin{aligned}
		\textstyle 
		\mathbb{I}_2 \leq & ~\frac{\nu}{8}\Vert \nabla \overline{e}_u^{n-1/2}\Vert^2 + \frac{\nu C_{\text{st}}^2\tau^3}{24} \Vert \bu_{tt}\Vert_{L^2(t_{n-1}, t_n;H^1)}^2,\\
		\mathbb{I}_3 \leq &~\frac{\mu}{8}\Vert \nabla\cdot\overline{e}_u^{n-1/2}\Vert^2 + \frac{ C_{\text{st}}^2\tau^3}{24\mu } \Vert p_{tt}\Vert_{L^2(t_{n-1}, t_n;L^2)}^2.
	\end{aligned}
	\right.   
\end{equation}
Then, for the nonlinear terms, we have
\begin{equation}
	\begin{aligned}
		\mathbb{I}_4 + \mathbb{I}_5 = &~ -c(\bu(t_{n-1/2})-\overline{\bu}(t_{n-1/2}), \bu(t_{n-1/2}), \overline{e}_u^{n-1/2}) -c(\overline{\bu}(t_{n-1/2})-{\rm P}_{\text{st}} \overline{\bu}(t_{n-1/2}), \bu(t_{n-1/2}), \overline{e}_u^{n-1/2})\\
		&~ -c(\overline{e}_u^{n-1/2}, {\rm P}_{\text{st}}\overline{\bu}(t_{n-1/2}), \overline{e}_u^{n-1/2})  - c( {\rm P}_{\text{st}} \overline{\bu}(t_{n-1/2}), \bu(t_{n-1/2}) - \overline{\bu}(t_{n-1/2}), \overline{e}_u^{n-1/2}) \\
		&~ - c({\rm P}_{\text{st}} \overline{\bu}(t_{n-1/2}), \overline{\bu}(t_{n-1/2})-{\rm P}_{\text{st}} \overline{\bu}(t_{n-1/2}), \overline{e}_u^{n-1/2}) 
		  := \sum\limits_{i=1}^5 \mathbb{H}_i,
	\end{aligned}
\end{equation}
where
\begin{equation*}
	\begin{aligned}
		\mathbb{H}_1 \leq & \textstyle ~C_{\text{em}}^2 \Vert \bu(t_{n-1/2})-\overline{\bu}(t_{n-1/2}) \Vert_1  \Vert  \bu(t_{n-1/2}) \Vert_2 \Vert \overline{e}_u^{n-1/2} \Vert\\
		\leq &\textstyle ~ \frac{1}{32}\Vert e_u^n\Vert^2 +  \frac{1}{32}\Vert e_u^{n-1}\Vert^2 + \frac{\tau^3}{12}C_{\text{em}}^4 \Vert \bu_{tt}\Vert_{L^2(t_{n-1},t_n;H^1)}^2\Vert \bu \Vert_{L^\infty(0,T;H^2)}^2,\\
		\mathbb{H}_2 \leq &\textstyle ~C_{\text{em}}^2 \Vert \overline{\bu}(t_{n-1/2})-{\rm P}_{\text{st}} \overline{\bu}(t_{n-1/2}) \Vert_1  \Vert  \bu(t_{n-1/2}) \Vert_2 \Vert \overline{e}_u^{n-1/2} \Vert\\
		\leq&\textstyle ~  \frac{1}{32}\Vert e_u^n\Vert^2 +  \frac{1}{32}\Vert e_u^{n-1}\Vert^2 +  2C_{\text{em}}^4 C_{\text{st}}^2 h^4 \Vert \bu \Vert_{L^\infty(0,T;H^3)}^2\Vert \bu \Vert_{L^\infty(0,T;H^2)}^2.
	\end{aligned}
\end{equation*}
By integration-by-parts, we have
\begin{equation*}
		\mathbb{H}_3 \leq \textstyle ~\left( \frac12 C_{\text{st}}\Vert \nabla \bu \Vert_{L^\infty(0,T;L^\infty)}  + \frac{1}{4\mu}C_{\text{Pi}}^2 C_{\text{st}}^2 \Vert \nabla \bu \Vert_{L^\infty(0,T;L^\infty)}^2 \right) \left( \Vert e_u^n\Vert^2 +  \Vert e_u^{n-1}\Vert^2 \right) + \frac{\mu}{8} \Vert \nabla\cdot \overline{e}_u^{n-1/2 }\Vert^2.
\end{equation*}
In addition,
\begin{equation*}
	\begin{aligned}
		%%%%%%%%%%%%%%%%%%%%%%%%%%
		\mathbb{H}_4\leq &\textstyle~ C_{\text{Pi}} C_{\text{st}} \Vert \nabla \bu \Vert_{L^\infty(0,T;L^\infty)}   \Vert \bu(t_{n-1/2}) - \overline{\bu}(t_{n-1/2}) \Vert_1 \Vert \overline{e}_u^{n-1/2} \Vert\\
		\leq &\textstyle~ \frac{1}{32}\Vert e_u^n\Vert^2 +  \frac{1}{32}\Vert e_u^{n-1}\Vert^2 + C_{\text{Pi}}^2 C_{\text{st}}^2  \Vert \nabla \bu\Vert_{L^\infty(0,T;L^\infty)}^2\frac{\tau^3}{6}   \Vert \bu_{tt}\Vert_{L^2(t_{n-1},t_n;H^1)}^2, \\
		%%%%%%%%%%%%%%%%%%%%%%%%%%
		\mathbb{H}_5\leq &\textstyle~ C_{\text{Pi}}  C_{\text{st}} \Vert \nabla \bu \Vert_{L^\infty(0,T;L^\infty)}  \Vert \overline{\bu}(t_{n-1/2})-{\rm P}_{\text{st}} \overline{\bu}(t_{n-1/2}) \Vert_1  \Vert  \overline{e}_u^{n-1/2}\Vert\\
		\leq & \textstyle ~ \frac{1}{32}\Vert e_u^n\Vert^2 +  \frac{1}{32}\Vert e_u^{n-1}\Vert^2 +  4  h^4C_{\text{Pi}}^2 C_{\text{st}}^4 \Vert \nabla \bu \Vert_{L^\infty(0,T;L^\infty)}^2 \Vert \bu \Vert_{L^\infty(0,T;H^3)}^2.
	\end{aligned}
\end{equation*}
Combining all the above estimates on $\mathbb{H}_i,\ i=1,\dots,5$, we have
\begin{equation}
	\begin{aligned}
		\mathbb{I}_4 + \mathbb{I}_5 &\leq \textstyle ~
		\left(\frac18 +  \frac12 C_{\text{st}} \Vert \nabla \bu \Vert_{L^\infty(0,T;L^\infty)} + \frac{1}{4\mu}C_{\text{Pi}}^2 C_{\text{st}}^2 \Vert \nabla \bu \Vert_{L^\infty(0,T;L^\infty)}^2  \right) \left( \Vert e_u^n\Vert^2 +  \Vert e_u^{n-1}\Vert^2 \right)\\
		&\textstyle +\frac{\tau^3}{12}C_{\text{em}}^4  \Vert \bu_{tt}\Vert_{L^2(t_{n-1},t_n;H^1)}^2\Vert \bu \Vert_{L^\infty(0,T;H^2)}^2
		+ C_{\text{Pi}}^2 C_{\text{st}}^2  \Vert \nabla \bu\Vert_{L^\infty(0,T;L^\infty)}^2\frac{\tau^3}{6}   \Vert \bu_{tt}\Vert_{L^2(t_{n-1},t_n;H^1)}^2
		\\
		%+  20C_{\text{em}}^4 C_{\text{st}}^2 h^4 \Vert \bu \Vert_{L^\infty(0,T;H^3)}^2\Vert \bu \Vert_{L^\infty(0,T;H^2)}^2
		&\textstyle
		+  2C_{\text{em}}^4 C_{\text{st}}^2 h^4 \Vert \bu \Vert_{L^\infty(0,T;H^3)}^2\Vert \bu \Vert_{L^\infty(0,T;H^2)}^2
		+ 4  h^4C_{\text{Pi}}^2 C_{\text{st}}^4 \Vert \nabla \bu \Vert_{L^\infty(0,T;L^\infty)}^2 \Vert \bu \Vert_{L^\infty(0,T;H^3)}^2\\
		&\textstyle 
		+  \frac{\mu}{8} \Vert \nabla\cdot \overline{e}_u^{n-1/2 }\Vert^2.
	\end{aligned}
\end{equation}
\textbf{Part II.} In this part, we focus on bounding $\mathbb{I}_7$ to recover the optimal approximation error of $\mathcal{O}(\varepsilon^2)$. The primary difficulty stems from the term $\overline{e}_p^{n-1/2}$, which must be coupled with $\varepsilon$. A direct application of Young's inequality to this term would yield merely a sub-optimal estimate of $\mathcal{O}(\varepsilon)$. To overcome this, we first decompose $\mathbb{I}_7$ into two terms by introducing the Lagrange interpolant of the time derivative of pressure. Exploiting the properties of the Scott-Vogelius element, we can then relate this interpolant to the velocity space. Consequently, we invoke the momentum equation to circumvent the direct estimation of $\overline{e}_p^{n-1/2}$.
 As for $\mathbb{I}_7$, we have
\begin{equation*}
	\begin{aligned}
		\textstyle
		\varepsilon \left(\frac{\partial p(t_{n-1/2})}{\partial t}, \overline{e}_p^{n-1/2}\right) =& \textstyle \ \varepsilon \left(\frac{\partial p(t_{n-1/2})}{\partial t} -I_h \frac{\partial p(t_{n-1/2})}{\partial t} , \overline{e}_p^{n-1/2}\right) + \varepsilon \left( I_h\frac{\partial p(t_{n-1/2})}{\partial t}, \overline{e}_p^{n-1/2}\right)\\
		= &\textstyle \ \varepsilon \left(\frac{\partial p(t_{n-1/2})}{\partial t} -I_h \frac{\partial p(t_{n-1/2})}{\partial t} , \overline{e}_p^{n-1/2}\right) + \varepsilon \left( \nabla\cdot \bw_h^{n-1/2}, \overline{e}_p^{n-1/2}\right),
	\end{aligned}
\end{equation*}
where
\begin{equation}\label{pressure:int}
	\begin{aligned}
		\textstyle 
		\varepsilon \left(\frac{\partial p(t_{n-1/2})}{\partial t} -I_h \frac{\partial p(t_{n-1/2})}{\partial t} , \overline{e}_p^{n-1/2}\right)	\leq \frac{\varepsilon}{8}\left(\Vert e_p^n\Vert^2 + \Vert  e_p^{n-1}\Vert^2\right) 
		+\varepsilon C_0^2 h^4 \Vert  \frac{\partial p(t_{n-1/2})}{\partial t} \Vert_2^2,
	\end{aligned}
\end{equation} 
and,
\begin{equation*}
	\begin{aligned}
		& \textstyle 
		\varepsilon\left( \nabla\cdot \bw_h^{n-1/2}, \overline{e}_p^{n-1/2}\right) =  \textstyle	\varepsilon\left(\frac{e_u^n-e_u^{n-1}}{\tau}, \bw_h^{n-1/2}\right)+\nu\varepsilon\left(\nabla \overline{e}_u^{n-1/2}, \nabla \bw_h^{n-1/2}\right)
		+\mu\varepsilon\left(\nabla \cdot \overline{e}_u^{n-1/2}, \nabla \cdot \bw_h^{n-1/2}\right) \\
		& \qquad\quad  \textstyle + \varepsilon\left(\frac{\partial \bu\left(t_{n-1/2} \right)}{\partial t}-{\rm P}_{\text{st}} \frac{\bu\left(t_n\right)-\bu\left(t_{n-1}\right)}{\tau}, \bw_h^{n-1/2}\right) 
		+\nu\varepsilon\left(\nabla{\rm P}_{\text{st}}\left( \bu\left(t_{n-1/2}\right)-\overline{\bu}\left(t_{n-1/2}\right)\right), \nabla \bw_h^{n-1/2}\right)\\
		& \qquad\quad  \textstyle - \varepsilon\left( {\rm Q}_{\text{st}}\left( p\left(t_{n-1/2}\right)-\overline{p}\left(t_{n-1/2}\right) \right), \nabla \cdot \bw_h^{n-1/2}\right)\\
		& \qquad\quad  \textstyle +\varepsilon c\left(\bu\left(t_{n-1/2}\right), \bu\left(t_{n-1/2}\right), \bw_h^{n-1/2}\right)  -\varepsilon c\left(\overline{\bu}_h^{n-1 / 2}, \overline{\bu}_h^{n-1/2}, \bw_h^{n-1/2}\right)
		:=\sum\limits_{i=1}^8\mathbb{J}_i,
	\end{aligned}
\end{equation*}
where 
\begin{equation}
	\left\{
	\begin{aligned}
		\textstyle 
		\mathbb{J}_2 \leq &~ \frac{\nu}{8} \Vert \nabla \overline{e}_u^{n-1/2}\Vert^2 + 2\nu \varepsilon^2 C_{\text{inf}}^2\Vert \frac{\partial p(t_{n-1/2})}{\partial t} \Vert^2,\\
		\textstyle 
		\mathbb{J}_3  \leq &~ \frac{\mu}{8} \Vert \nabla \cdot \overline{e}_u^{n-1/2}\Vert^2 + 2\mu \varepsilon^2 C_{\text{inf}}^2\Vert \frac{\partial p(t_{n-1/2})}{\partial t} \Vert^2.
	\end{aligned}
	\right.
\end{equation}
The last five terms of the right-hand side can be estimated silimar to $\mathbb{I}_i,\ i=1,\dots,5$, where $\bw_h^{n-1/2}$ plays the role of $\overline{e}_u^{n-1/2}$. Then, we have
\begin{equation}\label{time:cn}
	\begin{aligned}
		\sum\limits_{i=4}^8 \mathbb{J}_i&  \leq   \textstyle ~   \varepsilon^2C_{\text{Pi}}^2C_{\text{inf}}^2 \left( \frac12 +   C_{\text{st}} \Vert \nabla \bu \Vert_{L^\infty(0,T;L^\infty)} + \frac{1}{2\mu}C_{\text{Pi}}^2 C_{\text{st}}^2 \Vert \nabla \bu \Vert_{L^\infty(0,T;L^\infty)}^2 \right)  \Vert  \frac{\partial p(t_{n-1/2})}{\partial t} \Vert^2 + \frac{3\varepsilon^2}{8}C_{\text{inf}}^2 \Vert \frac{\partial p(t_{n-1/2})}{\partial t}\Vert^2
		 \\
		&\textstyle  + \frac{\tau^3}{640} \Vert \bu_{ttt}\Vert_{L^2(t_{n-1}, t_n;L^2)}^2 +  \frac{2C_{\text{st}}^2h^6 }{\tau} \Vert \bu_t\Vert_{L^2(t_{n-1}, t_n;H^3)}^2+ \frac{\nu C_{\text{st}}^2\tau^3}{24} \Vert \bu_{tt}\Vert_{L^2(t_{n-1}, t_n;H^1)}^2
		+ \frac{ C_{\text{st}}^2\tau^3}{24 \mu} \Vert p_{tt}\Vert_{L^2(t_{n-1}, t_n;L^2)}^2\\
		&\textstyle +\frac{\tau^3}{12}C_{\text{em}}^4  \Vert \bu_{tt}\Vert_{L^2(t_{n-1},t_n;H^1)}^2\Vert \bu \Vert_{L^\infty(0,T;H^2)}^2
		+ C_{\text{Pi}}^2 C_{\text{st}}^2  \Vert \nabla \bu\Vert_{L^\infty(0,T;L^\infty)}^2\frac{\tau^3}{6}   \Vert \bu_{tt}\Vert_{L^2(t_{n-1},t_n;H^1)}^2
		\\
		&\textstyle
		+  2C_{\text{em}}^4 C_{\text{st}}^2 h^4 \Vert \bu \Vert_{L^\infty(0,T;H^3)}^2\Vert \bu \Vert_{L^\infty(0,T;H^2)}^2
		+ 4  h^4C_{\text{Pi}}^2 C_{\text{st}}^4 \Vert \nabla \bu \Vert_{L^\infty(0,T;L^\infty)}^2 \Vert \bu \Vert_{L^\infty(0,T;H^3)}^2.
	\end{aligned}
\end{equation}
Combining all the estimates from \eqref{cn:1st} to \eqref{time:cn}, multiplying $2\tau$ on both sides, and summing up from $n=1$ to $n$, there holds
\begin{equation}\label{cn:all}
	\begin{aligned}
		&\textstyle	 \Vert e_u^n \Vert^2 + \varepsilon   \Vert e_p^n \Vert^2   + \nu \tau\sum\limits_{i=1}^n \Vert \nabla\overline{e}_u^{i-1/2} \Vert^2 + \mu  \tau\sum\limits_{i=1}^n \Vert \nabla\cdot \overline{e}_u^{i-1/2} \Vert^2\\
		& \textstyle 
		\leq ~ \textstyle
		 \tau\sum\limits_{i=1}^n \left(1 +  2 C_{\text{st}} \Vert \nabla \bu \Vert_{L^\infty(0,T;L^\infty)} +\frac{1}{ \mu}C_{\text{Pi}}^2C_{\text{st}}^2 \Vert \nabla\bu \Vert_{L^\infty(0,T;L^\infty)}  \right)   \Vert e_u^i\Vert^2
		+ \frac{ \tau \varepsilon}{2}\sum\limits_{i=1}^n   \Vert e_p^i\Vert^2 
		\\
			&\textstyle ~
			+  2\tau\sum\limits_{i=1}^n\varepsilon\left(\frac{e_u^i-e_u^{i-1}}{\tau}, \bw_h^{i-1/2}\right)  
		+ \frac{\tau^4}{160} \Vert \bu_{ttt}\Vert_{L^2(0, t_n;L^2)}^2 +  8C_{\text{st}}^2h^6  \Vert \bu_t\Vert_{L^2(0, t_n;H^3)}^2\\
		&\textstyle ~  + \frac{\nu C_{\text{st}}^2\tau^4}{6} \Vert \bu_{tt}\Vert_{L^2(0, t_n;H^1)}^2  + \frac{ C_{\text{st}}^2\tau^4 }{6\mu } \Vert p_{tt}\Vert_{L^2(0, t_n;L^2)}^2
		+\frac{\tau^4}{3}C_{\text{em}}^4  \Vert \bu_{tt}\Vert_{L^2(0,t_n;H^1)}^2\Vert \bu \Vert_{L^\infty(0,T;H^2)}^2
		\\
		&\textstyle ~ 
			+  \frac{2\tau^4}{3} C_{\text{Pi}}^2 C_{\text{st}}^2 \Vert \nabla \bu\Vert_{L^\infty(0,T;L^\infty)}^2   \Vert \bu_{tt}\Vert_{L^2(0,t_n;H^1)}^2
		+  8t_nC_{\text{em}}^4 C_{\text{st}}^2 h^4 \Vert \bu \Vert_{L^\infty(0,T;H^3)}^2\Vert \bu \Vert_{L^\infty(0,T;H^2)}^2	\\
			&\textstyle~ 
			+ 16t_n  h^4C_{\text{Pi}}^2 C_{\text{st}}^4 \Vert \nabla \bu \Vert_{L^\infty(0,T;L^\infty)}^2 \Vert \bu \Vert_{L^\infty(0,T;H^3)}^2
    + 
\frac{\varepsilon\tau^4}{320} \Vert p_{ttt}\Vert_{L^2(0, t_n;L^2)}^2 +  4\varepsilon C_{\text{st}}^2h^4  \Vert p_t\Vert_{L^2(0, t_n;H^2)}^2 			\\
		&\textstyle ~
	+ 2\tau\sum\limits_{i=1}^n \varepsilon C_0^2 h^4 \Vert  \frac{\partial p(t_{i-1/2})}{\partial t} \Vert_2^2  
	 +   \tau\sum\limits_{i=1}^n  \left(4 \left( \nu+\mu\right)+\frac34 \right) \varepsilon^2 C_{\text{inf}}^2\Vert \frac{\partial p(t_{i-1/2})}{\partial t} \Vert^2 \\
	&\textstyle  ~  +   2\tau\sum\limits_{i=1}^n\varepsilon^2C_{\text{Pi}}^2C_{\text{inf}}^2 \left(\frac12 +   C_{\text{st}} \Vert \nabla \bu \Vert_{L^\infty(0,T;L^\infty)} +\frac{1}{2\mu} C_{\text{Pi}}^2 C_{\text{st}}^2\Vert \bu \Vert_{L^\infty(0,T;L^\infty)}  \right)  \Vert  \frac{\partial p(t_{i-1/2})}{\partial t} \Vert^2,
	\end{aligned}
\end{equation}
where $\Vert e_u^0 \Vert^2=0$ and $ \Vert  e_p^0 \Vert^2=0$ are applied, and 
\begin{equation*}
	\begin{aligned}
		\textstyle 
		2\tau\sum\limits_{i=1}^n\varepsilon\left(\frac{e_u^i-e_u^{i-1}}{\tau}, \bw_h^{i-1/2}\right) & =  \textstyle 2\varepsilon \left(e_u^n, \bw_h^{n-1/2}\right) -2\tau \varepsilon \sum\limits_{i=1}^{n-1} \left(e_u^i, \frac{\bw_h^{i+1/2} - \bw_h^{i-1/2}}{\tau}\right)\\
		&\leq \textstyle \ \frac14 \Vert e_u^n \Vert^2 + 4\varepsilon^2 \Vert \bw_h^{n-1/2}\Vert^2 + \tau\sum\limits_{i=1}^{n-1} \frac14  \Vert e_u^i \Vert^2 + \tau\sum\limits_{i=1}^{n-1} 4\varepsilon^2 \Vert \frac{\bw_h^{i+1/2} - \bw_h^{i-1/2}}{\tau}\Vert^2 \\
		&\leq \textstyle \ \frac14 \Vert e_u^n \Vert^2 + 4\varepsilon^2C_{\text{Pi}}^2C_{\text{inf}}^2 \Vert \frac{\partial p(t_{n-1/2})}{\partial t}\Vert^2 + \tau\sum\limits_{i=1}^{n-1} \frac14  \Vert e_u^i \Vert^2 +   4\varepsilon^2  C_{\text{Pi}}^2C_{\text{inf}}^2 \Vert p_{tt}\Vert_{L^2(t_0, t_{n};L^2)}^2. 
	\end{aligned}
\end{equation*}
Lastly, applying  $\tau\leq \frac{1}{ 4 + 8 C_{\text{st}} \Vert \nabla \bu \Vert_{L^\infty(0,T;L^\infty)} +\frac{4}{ \mu}C_{\text{Pi}}^2C_{\text{st}}^2 \Vert \bu \Vert_{L^\infty(0,T;L^\infty)}   }:=\tau_1$ and Gr\"owall's inequality with $\max\{\tau, h, \nu, \mu\}\leq 1$, we have
\begin{equation}\label{cn:last}
	\begin{aligned}
		&\textstyle	 \Vert e_u^n \Vert^2 + \varepsilon   \Vert e_p^n \Vert^2   + \nu \tau\sum\limits_{i=1}^n \Vert \nabla\overline{e}_u^{i-1/2} \Vert^2 + \mu  \tau\sum\limits_{i=1}^n \Vert \nabla\cdot \overline{e}_u^{i-1/2} \Vert^2\\
		& \textstyle 
		\leq ~ \textstyle
		\exp\left( t_n\left(\frac52+  4 C_{\text{st}} \Vert \nabla \bu \Vert_{L^\infty(0,T;L^\infty)} +\frac{2}{ \mu} C_{\text{Pi}}^2C_{\text{st}}^2\Vert \nabla\bu \Vert_{L^\infty(0,T;L^\infty)} \right) \right) \left\{
		\frac{\tau^4}{80} \Vert \bu_{ttt}\Vert_{L^2(0, t_n;L^2)}^2 +  16C_{\text{st}}^2h^6  \Vert \bu_t\Vert_{L^2(0, t_n;H^3)}^2
		\right.\\
		&\textstyle ~  + \frac{\nu C_{\text{st}}^2\tau^4}{3} \Vert \bu_{tt}\Vert_{L^2(0, t_n;H^1)}^2  + \frac{ C_{\text{st}}^2\tau^4 }{3\mu } \Vert p_{tt}\Vert_{L^2(0, t_n;L^2)}^2
		+\frac{2\tau^4}{3}C_{\text{em}}^4  \Vert \bu_{tt}\Vert_{L^2(0,t_n;H^1)}^2\Vert \bu \Vert_{L^\infty(0,T;H^2)}^2
		\\
		&\textstyle ~ 
		+  \frac{4\tau^4}{3} C_{\text{Pi}}^2 C_{\text{st}}^2 \Vert \nabla \bu\Vert_{L^\infty(0,T;L^\infty)}^2   \Vert \bu_{tt}\Vert_{L^2(0,t_n;H^1)}^2
		+  16 t_nC_{\text{em}}^4 C_{\text{st}}^2 h^4 \Vert \bu \Vert_{L^\infty(0,T;H^3)}^2\Vert \bu \Vert_{L^\infty(0,T;H^2)}^2	\\
		&\textstyle~ 
		+ 32 t_n  h^4C_{\text{Pi}}^2 C_{\text{st}}^4 \Vert \nabla \bu \Vert_{L^\infty(0,T;L^\infty)}^2 \Vert \bu \Vert_{L^\infty(0,T;H^3)}^2
		+ 
		\frac{\varepsilon\tau^4}{160} \Vert p_{ttt}\Vert_{L^2(0, t_n;L^2)}^2 +  8 \varepsilon C_{\text{st}}^2h^4  \Vert p_t\Vert_{L^2(0, t_n;H^2)}^2 			\\
		&\textstyle ~
		+ 4\tau\sum\limits_{i=1}^n \varepsilon C_0^2 h^4 \Vert  \frac{\partial p(t_{i-1/2})}{\partial t} \Vert_2^2  
		+  2 \tau\sum\limits_{i=1}^n  \left(4 \left( \nu+\mu\right)+\frac34 \right) \varepsilon^2 C_{\text{inf}}^2\Vert \frac{\partial p(t_{i-1/2})}{\partial t} \Vert^2 \\
		&\textstyle  ~  \left. +   4\tau\sum\limits_{i=1}^n\varepsilon^2C_{\text{Pi}}^2C_{\text{inf}}^2 \left(\frac12 +   C_{\text{st}} \Vert \nabla \bu \Vert_{L^\infty(0,T;L^\infty)} +\frac{1}{2\mu} C_{\text{Pi}}^2 C_{\text{st}}^2\Vert \bu \Vert_{L^\infty(0,T;L^\infty)}  \right)  \Vert  \frac{\partial p(t_{i-1/2})}{\partial t} \Vert^2 
		\right\}\\
		&
		\leq ~ \textstyle
	\exp\left( t_n\left(\frac52+  4 C_{\text{st}} \Vert \nabla \bu \Vert_{L^\infty(0,T;L^\infty)} +\frac{2}{ \mu} C_{\text{Pi}}^2C_{\text{st}}^2\Vert \nabla\bu \Vert_{L^\infty(0,T;L^\infty)} \right) \right) \mathcal{C}(t_n)(\tau^4 + h^4 + \varepsilon^2).
	\end{aligned}
\end{equation}

\end{proof}

\begin{remark}
	If we consider the linearized scheme \eqref{acm:gdiv:lin}, $\mathbb{I}_4 + \mathbb{I}_5$ will be 
	\begin{equation*}
			\begin{aligned}
				\mathbb{I}_4 + \mathbb{I}_5 = &~ -c(\bu(t_{n-1/2})-\widetilde{\bu}(t_{n-1/2}), \bu(t_{n-1/2}), \overline{e}_u^{n-1/2}) -c(\widetilde{\bu}(t_{n-1/2})-{\rm P}_{\text{st}} \widetilde{\bu}(t_{n-1/2}), \bu(t_{n-1/2}), \overline{e}_u^{n-1/2})\\
				&~ -c(\widetilde{e}_u^{n-1/2}, {\rm P}_{\text{st}}\overline{\bu}(t_{n-1/2}), \overline{e}_u^{n-1/2})  - c( {\rm P}_{\text{st}} \widetilde{\bu}(t_{n-1/2}), \bu(t_{n-1/2}) - \overline{\bu}(t_{n-1/2}), \overline{e}_u^{n-1/2}) \\
				&~ - c({\rm P}_{\text{st}} \widetilde{\bu}(t_{n-1/2}), \overline{\bu}(t_{n-1/2})-{\rm P}_{\text{st}} \overline{\bu}(t_{n-1/2}), \overline{e}_u^{n-1/2}) 
				:= \sum\limits_{i=1}^5 \widetilde{\mathbb{H}}_i,
			\end{aligned}
	\end{equation*}
	where ${\mathbb{H}}_1, {\mathbb{H}}_2, {\mathbb{H}}_4, \text{ and }{\mathbb{H}}_5$ can be estimated in the same manner. Meanwhile, $\widetilde{\mathbb{H}}_3$ is estimated as follow
	\begin{equation*}
		\begin{aligned}
		\textstyle \widetilde{\mathbb{H}}_3 & = ( (\widetilde{e}_u^{n-1/2}\cdot\nabla) {\rm P}_{\text{st}}\overline{\bu}(t_{n-1/2}), \overline{e}_u^{n-1/2}) + \frac12 ((\nabla\cdot \widetilde{e}_u^{n-1/2} ){\rm P}_{\text{st}}\overline{\bu}(t_{n-1/2}),  \overline{e}_u^{n-1/2}) \\
		&\textstyle \leq \Vert \nabla {\rm P}_{\text{st}}\overline{\bu}(t_{n-1/2})\Vert_{L^\infty}  \Vert \widetilde{e}_u^{n-1/2} \Vert \Vert \overline{e}_u^{n-1/2}\Vert  + \frac12  \Vert {\rm P}_{\text{st}}\overline{\bu}(t_{n-1/2}) \Vert_{L^\infty} \Vert \nabla\cdot \widetilde{e}_u^{n-1/2} \Vert \Vert \overline{e}_u^{n-1/2} \Vert.
		\end{aligned}
	\end{equation*} 
	Then, by Young's inequality, we can still avoid introducing $\frac{1}{\nu}$ on the right-hand side. Therefore, Reynolds number robustness can also be achieved.
\end{remark}
%\begin{remark}
%	This part, we show the Reynolds number robust error estimate. Thanks to the introduced grad-div stabilization term, we can avoid $\frac{1}{\nu}$ in  $\mathbb{I}_3$, $\mathbb{H}_3$, and $\mathbb{J}_3$. 
%\end{remark}

\section{Optimal $L^2$ error estimate on ACM-CN}\label{sect:opl2}
In this section, we aim to establish the optimal $L^2$ error estimate for the velocity. Usually, there are two approaches: the negative norm technique \cite{he2018priori,guo2018} and the Aubin-Nitsche duality argument. However, both of them fail for the ACM scheme due to Eq. \eqref{acm:rog}. The negative norm technique relies crucially on the incompressibility property, as demonstrated in \cite[Lemma 4.2]{he2018priori}. Meanwhile, the duality argument cannot circumvent the estimate \eqref{pressure:int} while preserving the optimal perturbation rate of $\mathcal{O}(\varepsilon^2)$. To achieve the optimal $L^2$ error estimate, we instead consider the difference between the solutions of the classical Navier-Stokes solver and the CN-ACM method.

Since the Scott-Vogelius element is pointwisely divergence-free, the term $(\nabla\cdot\bm{\xi}_h, \nabla\cdot\bv_h)$ vanishes identically. However, to maintain consistency with the grad-div stabilized ACM formulation, we retain this term in the scheme. Consequently, the classical Navier-Stokes finite element discretization reads: given $(\bxi_h^{n-1}, r_h^{n-1} )\in \bX_h\times M_h$, find $ (\bxi_h^n, r_h^n)\in \bX_h\times M_h$, such that
\begin{subequations}\label{NS:gdiv:cn:eq}
	\begin{empheq}[left=\empheqlbrace]{align}
		& \textstyle
		\left(\frac{\bm{\xi}_h^n-\bm{\xi}_h^{n-1}}{\tau}, \bv_h\right)+\nu\left(\nabla \overline{\bm{\xi}}_h^{n-1/2}, \nabla \bv_h\right) +\mu\left(\nabla\cdot\overline{\bm{\xi}}_h^{n-1/2}, \nabla\cdot\bv_h\right) \notag \\
		&\textstyle    
		\qquad\qquad\qquad+c\left( \overline{\bm{\xi}}_h^{n-1/2}, \overline{\bm{\xi}}_h^{n-1/2}, \bv_h\right)
		-\left( \overline{r}_h^{n-1/2}, \nabla \cdot \bv_h\right)=\left(\bm f^{n-1/2}, \bv_h\right), \forall\ \bv_h\in \bX_h, \label{NS:mom:eq:cn}\\
		& \textstyle
		\left( \nabla\cdot  \overline{\bm{\xi}}_h^{n-1/2}, q_h\right)=0, \forall \ q_h\in M_h,\label{NS:pres:eq:cm}
	\end{empheq}
\end{subequations}
As the finite element scheme \eqref{NS:gdiv:cn:eq} has been thoroughly investigated, we simply state its error estimate and omit the proof. For the optimal spatial estimate, we refer the reader to \cite{heywood1990finite}; for the optimal temporal estimate, see \cite{rang2008pressure,sonner2020second}. Most importantly, this scheme is also robust with respect to the Reynolds number; see \cite{john2017divergence,olshanskii2020longer}.
\begin{lem}\label{op:reg:ns}
	There exists a positive constant $K(T)$, independent of $\nu$, such that
	\begin{equation}
	\left\{
		\begin{aligned}
	\nu\sum\limits_{i=1}^n\tau \Vert \nabla(\bm{\xi}_h^i-{\rm P}_{\text{st}}\bu(t_i))\Vert^2  +\sum\limits_{i=1}^n\tau \Vert  r_h^i-{\rm Q}_{\text{st}}p(t_i) \Vert^2\leq K(T)(\tau^4+h^4),\\
		\Vert\bm{\xi}_h^n-{\rm P}_{\text{st}}\bu(t_n)\Vert^2 \leq K(T)(\tau^4+h^6).
	\end{aligned}
	\right.
	\end{equation}
\end{lem}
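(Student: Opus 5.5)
Setting $\bm\eta^n:=\bxi_h^n-{\rm P}_{\text{st}}\bu(t_n)\in\bm V_h$ and $\rho^n:=r_h^n-{\rm Q}_{\text{st}}p(t_n)$, I would argue in three steps: a robust energy estimate for $\bm\eta^n$ (giving $\tau^4+h^4$), a duality-type argument for the $L^2$ norm (giving $h^6$), and an inf-sup recovery of the pressure. The scheme \eqref{NS:gdiv:cn:eq} is the limiting case $\varepsilon=0$ of \eqref{acm:gdiv:eq}. Hence the error equation is \eqref{acm:gdiv:err} with $e_u,e_p$ replaced by $\bm\eta,\rho$. The pressure equation is replaced by $(\nabla\cdot\overline{\bm\eta}^{n-1/2},q_h)=0$, which holds because both $\overline{\bxi}_h^{n-1/2}$ and ${\rm P}_{\text{st}}\bu$ are divergence-free. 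The latter follows from \eqref{stokes:proj} with $\nabla\cdot\bu=0$ and $\nabla\cdot\bX_h\subset M_h$ for Scott--Vogelius.

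\textbf{Step 1: energy estimate.} Test the error equation with $\bv_h=\overline{\bm\eta}^{n-1/2}\in\bm V_h$. The pressure term $(\overline\rho^{n-1/2},\nabla\cdot\overline{\bm\eta}^{n-1/2})$ and the grad-div term then vanish. The consistency terms $\mathbb I_1,\mathbb I_2,\mathbb I_3$ and the nonlinear splitting $\mathbb H_1$--$\mathbb H_5$ are bounded exactly as in Theorem \ref{op:cn:h1}, with one change. In $\mathbb H_3$, since $\nabla\cdot\overline{\bm\eta}=0$, the skew form reduces to $((\overline{\bm\eta}\cdot\nabla){\rm P}_{\text{st}}\overline\bu,\overline{\bm\eta})$. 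This is bounded by $C_{\text{st}}\|\nabla\bu\|_{L^\infty}\|\overline{\bm\eta}\|^2$ using Lemma \ref{stokes_stability}, with no $1/\nu$ and no $1/\mu$. The $\mathbb I_3$ term also disappears, since it is tested against $\nabla\cdot\overline{\bm\eta}=0$. Gr\"onwall (Lemma \ref{gronwall1}) with a $\nu$-independent exponent then gives
\[
\|\bm\eta^n\|^2+\nu\tau\sum_i\|\nabla\overline{\bm\eta}^{i-1/2}\|^2\le K(\tau^4+h^4).
\]
To pass from the averages $\overline{\bm\eta}^{i-1/2}$ to the nodal values $\bm\eta^i$, I would use $\bm\eta^0=0$ and the telescoping identity $\bm\eta^i=2\overline{\bm\eta}^{i-1/2}-\bm\eta^{i-1}$. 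This is the standard Crank--Nicolson device, and it costs at most a factor depending on $n$, which is absorbed into $K(T)$. This already gives the first bound of Lemma \ref{op:reg:ns} for the velocity.

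\textbf{Step 2: the $L^2$ bound with $h^6$.} The only terms that limit the rate to $h^4$ are $\mathbb H_2$ and $\mathbb H_5$, which involve $\|\bu-{\rm P}_{\text{st}}\bu\|_1=O(h^2)$. I would integrate by parts in the skew-symmetric form to move the derivative off the projection error. For $\mathbb H_2$ this gives terms bounded by $\|\bu-{\rm P}_{\text{st}}\bu\|\,\|\bu\|_{W^{1,\infty}}\|\overline{\bm\eta}\|$ plus a term involving $\nabla\cdot(\bu-{\rm P}_{\text{st}}\bu)=0$. For $\mathbb H_5$ the terms are bounded by $\|{\rm P}_{\text{st}}\overline\bu\|_{W^{1,\infty}}\|\overline\bu-{\rm P}_{\text{st}}\overline\bu\|\,\|\overline{\bm\eta}\|$, plus a piece with $\nabla\overline{\bm\eta}$ in which the projection error appears only in $L^2$. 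That last piece is handled by $\|{\rm P}_{\text{st}}\overline\bu\|_{L^\infty}\|\overline\bu-{\rm P}_{\text{st}}\overline\bu\|\,\|\nabla\overline{\bm\eta}\|$.

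\textbf{Main obstacle.} This $\nabla\overline{\bm\eta}$ term is the step I expect to be hardest. A robust treatment should not pay $1/\nu$. My first attempt would be an inverse inequality \eqref{666}, $\|\nabla\overline{\bm\eta}\|\le C_{\text{inv}}h^{-1}\|\overline{\bm\eta}\|$, which yields $h^2\cdot h^3\cdot h^{-1}$. That only gives $h^4$, so a finer argument is needed. I would then try the $L^\infty$-stability of the Stokes projection together with a superconvergent bound $\|\bu-{\rm P}_{\text{st}}\bu\|_{-1}\le Ch^4$. If this fails, I would accept a $\nu$-dependent constant for the $h^6$ part only, as in \cite{heywood1990finite}, while keeping the temporal part robust. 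In the worst case I would cite \cite{john2017divergence,olshanskii2020longer} for the robust version. The $\mathbb I_1$ spatial term already gives $h^6$, and the temporal terms give $\tau^4$.

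\textbf{Step 3: pressure.} From the error equation and the inf-sup condition \eqref{dis:infsup},
\[
\gamma\|\overline\rho^{n-1/2}\|\le \Big\|\frac{\bm\eta^n-\bm\eta^{n-1}}{\tau}\Big\|_{-1}+\nu\|\nabla\overline{\bm\eta}\|+(\text{consistency})+(\text{nonlinear}).
\]
The discrete time derivative of $\bm\eta$ is controlled by a second energy estimate, obtained by testing with $(\bm\eta^n-\bm\eta^{n-1})/\tau$ (again via Gr\"onwall). This bounds $\tau\sum\|\overline\rho^{i-1/2}\|^2$. Finally, I would convert from averages to nodal values $\rho^i$ using $\rho^0$, after choosing $r_h^0={\rm Q}_{\text{st}}p(0)$.
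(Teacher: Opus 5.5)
The paper does not prove this lemma. It only cites \cite{heywood1990finite} for the optimal spatial order and \cite{john2017divergence,olshanskii2020longer} for $\nu$-robustness. Your Step~1 is correct for the time averages: it gives $\|\bm\eta^n\|^2+\nu\tau\sum_i\|\nabla\overline{\bm\eta}^{i-1/2}\|^2\le K(\tau^4+h^4)$ with $K$ independent of $\nu$. The proposal has a genuine gap at exactly the point you flag, and none of your remedies closes it. Since ${\rm P}_{\text{st}}\overline{\bu}$ is divergence-free, $\mathbb H_5=\pm\bigl(({\rm P}_{\text{st}}\overline{\bu}\cdot\nabla)\overline{\bm\eta},\,\overline{\bu}-{\rm P}_{\text{st}}\overline{\bu}\bigr)$, which is of size $h^3\|\nabla\overline{\bm\eta}\|$. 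You control $\|\nabla\overline{\bm\eta}\|$ only with the weight $\nu$, so Young's inequality leaves $\nu^{-1}h^6$, or through an inverse estimate, which brings you back to $h^4$.

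The negative-norm idea does not apply. The function $({\rm P}_{\text{st}}\overline{\bu}\cdot\nabla)\overline{\bm\eta}$ jumps across element faces, so it is not an $H_0^1$ function that $\|\bu-{\rm P}_{\text{st}}\bu\|_{-1}$ could be paired with. A broken-norm version costs the same powers of $h$ through inverse estimates. Your fallbacks do not prove the statement either:
\begin{itemize}
\item a $\nu$-dependent constant in the $h^6$ part contradicts ``$K(T)$ independent of $\nu$'';
\item the $\nu$-independent bounds in \cite{john2017divergence,olshanskii2020longer} are of order $h^k$ in $L^\infty(L^2)$, i.e.\ $h^4$ after squaring;
\item the $h^{k+1}$ bound of \cite{heywood1990finite} has $\nu$-dependent constants.
\end{itemize}
Combining these references, as the paper implicitly does, gives $\tau^4+h^4$ robustly or $\tau^4+h^6$ non-robustly, not both at once. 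The obstacle is likely structural rather than technical. As $\nu\to 0$ the error equation becomes a transport problem, and for transport the continuous Galerkin method is in general only $O(h^k)$ accurate in $L^2$.

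Two further steps fail as written.

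First, passing from averages to nodal values is not free. From $\bm\eta^0=0$ and $\bm\eta^i=2\overline{\bm\eta}^{i-1/2}-\bm\eta^{i-1}$ you get $\bm\eta^i=2\sum_{j\le i}(-1)^{i-j}\overline{\bm\eta}^{j-1/2}$. This yields only $\nu\tau\sum_i\|\nabla\bm\eta^i\|^2\le 4n^2\,\nu\tau\sum_j\|\nabla\overline{\bm\eta}^{j-1/2}\|^2$. The factor $n^2=t_n^2/\tau^2$ depends on $\tau$, so it cannot be absorbed into $K(T)$. To exploit the cancellation in the alternating sum you need a bound on the discrete time derivative of $\overline{\bm\eta}$, or an estimate that gives $\nu\|\nabla\bm\eta^n\|^2$ directly. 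The same objection applies to recovering $\rho^i$ from $\overline{\rho}^{\,i-1/2}$. The scheme determines only $\overline{r}_h^{\,n-1/2}$, and the recursion $r_h^n=2\overline{r}_h^{\,n-1/2}-r_h^{n-1}$ propagates errors without damping.

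Second, the second energy estimate in Step~3 is not robust. Testing with $\partial_\tau\bm\eta^n$ leaves the convective term $\bigl(({\rm P}_{\text{st}}\overline{\bu}\cdot\nabla)\overline{\bm\eta},\,\partial_\tau\bm\eta^n\bigr)$, or its analogue with $\overline{\bxi}_h$. Bounding it requires $\tau\sum\|\nabla\overline{\bm\eta}\|^2$ without the factor $\nu$. Step~1 does not supply this. The inverse inequality gives only $\tau\sum\|\partial_\tau\bm\eta\|^2\lesssim h^2+\tau^4h^{-2}$, which is a robust pressure bound of order $h$ rather than $h^2$.
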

Hence, if $\tau\leq h^{\frac{1}{2}+\frac{d}{4}}$ with $h\leq 1$, then
$$\Vert \nabla(\bm{\xi}_h^n-{\rm P}_{\text{st}}\bu(t_n)) \Vert_{L^{\infty}}\leq C_{\text{inv}}h^{-1-\frac{d}{2}} \Vert  \bm{\xi}_h^n-{\rm P}_{\text{st}}\bu(t_n) \Vert\leq 2 C_{\text{inv}} \sqrt{K(T)},$$
and
$$\Vert\nabla\bm{\xi}_h^n\Vert_{L^{\infty}}\leq\Vert\nabla{\rm P}_{\text{st}}\bu(t_n)\Vert_{L^{\infty}}+ 2 C_{\text{inv}}\sqrt{K(T)}:= C_{5},$$
which is boudned.

Now, we turn to estimate the approximation between CN-ACM and Eq. \eqref{NS:gdiv:cn:eq}. First, we introduce the difference quotient $\partial_\tau \phi^n:=\frac{\phi^n-\phi^{n-1}}{\tau}$. Subsequently, 
we establish a sub-optimal estimate for the difference quotient of the pressure error, namely $\partial_\tau (r_h^n-{\rm{Q}_{\text{st}}}p(t_n)) = \frac{(r_h^n-{\rm{Q}_{\text{st}}}p(t_n))  -  (r_h^{n-1}-{\rm{Q}_{\text{st}}}p(t_{n-1})) }{\tau}$. Despite being sub-optimal, this estimate is sufficient to bound the second-order difference quotient $\frac{r_h^n-2r_h^{n-1}+r_h^{n-2}}{\tau^2}$. Moreover, it is robust with respect to the Reynolds number. For simplicity, $\widetilde{C}$ denotes a generic constant independent of $h$, $\tau$, and $\nu^{-1}$, which may change from line to line.
Set $E_u^n={\rm P}_{\text{st}}\bu(t_n) - \bxi_h^n$ and $E_p^n={\rm Q}_{\text{st}}p(t_n) - r_h^n$.
\begin{thm}\label{approximation}
	Given $h\leq 1$, if $\textstyle \tau\leq \min\{h^{\frac12 + \frac{d}{4}}, \frac14 \left(\frac{C_{5}^2}{4\mu}+ C_{5} + \frac54 \right)^{-1}\}$, we can have the following estimate
	\begin{equation}
		\textstyle\frac12 \Vert E_u^n \Vert^2   + \varepsilon \Vert E_p^n \Vert^2  +  \nu \sum\limits_{i=1}^n \tau \Vert\nabla \overline{E}_u^{i-1/2}\Vert^2 \leq  \widetilde{C} \varepsilon^2.
	\end{equation}
\end{thm}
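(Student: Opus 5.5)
The plan is to read $E_u^n,E_p^n$ as measuring the gap between the CN-ACM solution and the classical scheme \eqref{NS:gdiv:cn:eq}. Concretely, I would work with $\delta_u^n:=\bxi_h^n-\bu_h^n$ and $\delta_p^n:=r_h^n-p_h^n$, both vanishing at $n=0$ for matched initial data. Afterwards I would transfer the bound to ${\rm P}_{\text{st}}\bu(t_n)-\bu_h^n$ through Lemma \ref{op:reg:ns}, since that part is only $\mathcal O(\tau^4+h^6)$ in $L^2$.

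\textbf{Error equations and energy test.} Subtract \eqref{acm:gdiv:eq} from \eqref{NS:gdiv:cn:eq}. The momentum difference has no consistency residual. The continuity difference reads
\[
\varepsilon\bigl(\partial_\tau\delta_p^n,q_h\bigr)+\bigl(\nabla\cdot\overline{\delta}_u^{n-1/2},q_h\bigr)=\varepsilon\bigl(\partial_\tau r_h^n,q_h\bigr),
\]
because \eqref{NS:pres:eq:cm} lacks the $\varepsilon$-term. Test the momentum difference with $\overline{\delta}_u^{n-1/2}$ and the continuity difference with $\overline{\delta}_p^{n-1/2}$, then add. This produces the energy $\frac{1}{2\tau}\bigl(\Vert\delta_u^n\Vert^2-\Vert\delta_u^{n-1}\Vert^2\bigr)+\frac{\varepsilon}{2\tau}\bigl(\Vert\delta_p^n\Vert^2-\Vert\delta_p^{n-1}\Vert^2\bigr)+\nu\Vert\nabla\overline\delta_u\Vert^2+\mu\Vert\nabla\cdot\overline\delta_u\Vert^2$.

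\textbf{Nonlinear term.} Write $c(\overline\bxi,\overline\bxi,\bv)-c(\overline\bu_h,\overline\bu_h,\bv)=c(\overline\delta_u,\overline\bxi,\bv)+c(\overline\bu_h,\overline\delta_u,\bv)$. The second piece vanishes for $\bv=\overline\delta_u$ by skew-symmetry. For the first, integrate by parts, $c(\delta,\bxi,\delta)=((\delta\cdot\nabla)\bxi,\delta)+\frac12((\nabla\cdot\delta)\bxi,\delta)$. Use the bound $\Vert\nabla\bxi_h^n\Vert_{L^\infty}\le C_5$, which holds thanks to $\tau\le h^{1/2+d/4}$, together with Poincaré, and absorb the divergence part into $\mu\Vert\nabla\cdot\overline\delta_u\Vert^2$. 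This gives a bound $\bigl(C_5+\tfrac{C_5^2}{4\mu}\bigr)\bigl(\Vert\delta_u^n\Vert^2+\Vert\delta_u^{n-1}\Vert^2\bigr)$ with no $\nu^{-1}$. Its coefficient is exactly what fixes the second restriction on $\tau$ in the final Gr\"onwall step.

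\textbf{Source term $\varepsilon(\partial_\tau r_h^n,\overline\delta_p)$.} This must be handled as in Part II of Theorem \ref{op:cn:h1}; a direct Young inequality would only give $\mathcal O(\varepsilon)$. Since the pressure space is discontinuous $\mathbb P_1$, $\partial_\tau r_h^n\in M_h$. By the SV inf-sup \eqref{dis:infsup}, pick $\bw_h^n\in\bX_h$ with $\nabla\cdot\bw_h^n=\partial_\tau r_h^n$ and $\Vert\bw_h^n\Vert_1\le C_{\text{inf}}\Vert\partial_\tau r_h^n\Vert$. Then $\varepsilon(\nabla\cdot\bw_h^n,\overline\delta_p)$ is replaced, via the momentum difference tested with $\bw_h^n$, by the $\nu$-, $\mu$- and nonlinear terms multiplied by $\varepsilon$. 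These are bounded by Young's inequality by small multiples of the dissipation plus $\widetilde C\varepsilon^2\Vert\partial_\tau r_h^n\Vert^2$. The remaining time-derivative term $\varepsilon(\partial_\tau\delta_u^n,\bw_h^n)$ is summed by parts in time. That leaves $\varepsilon(\delta_u^n,\bw_h^n)$ and $\tau\varepsilon\sum_i(\delta_u^i,\partial_\tau\bw_h^{i+1})$, which need $\Vert\partial_\tau r_h^n\Vert$ and $\tau\sum\Vert\partial_\tau^2 r_h\Vert^2$ bounded uniformly in $\nu$.

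\textbf{Main obstacle.} Obtaining those bounds is the main obstacle. I would split $r_h=Q_{\text{st}}p-E_p$ with $E_p={\rm Q}_{\text{st}}p-r_h$, so the exact pressure contributes $p_t,p_{tt}$ terms. For $\partial_\tau E_p^n$, apply the discrete difference operator to the classical-scheme error equation for $E_u$ and test with $\partial_\tau\overline E_u$ in $\bV_h$. Then recover $\partial_\tau E_p$ via inf-sup from the momentum equation, using $\Vert\nabla\bxi_h\Vert_{L^\infty}\le C_5$ and inverse estimates under $\tau\le h^{1/2+d/4}$. This yields a sub-optimal, $\nu$-robust bound on $\Vert\partial_\tau E_p\Vert$ and on the second difference quotient, which suffices because it is multiplied by $\varepsilon^2$. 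With these in hand, sum over $n$, absorb $\frac14\Vert\delta_u^n\Vert^2$ (this gives the $\frac12$ on the left-hand side), and apply Lemma \ref{gronwall1} to conclude $\widetilde C\varepsilon^2$.
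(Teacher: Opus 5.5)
Your architecture is the paper's: the energy test of the difference system for $\delta_u=\bxi_h-\bu_h$, $\delta_p=r_h-p_h$; the $C_5$ bound on $c(\overline{\delta}_u,\overline{\bxi}_h,\overline{\delta}_u)$ with the divergence part absorbed by $\mu$; the inf-sup lift of $\partial_\tau r_h^n$ inserted into the momentum difference; summation by parts in time; and Gr\"onwall. The genuine gap is your claim that the $\varepsilon$-weighted nonlinear terms produced by testing with $\bw_h^n$ are handled by Young's inequality. Split $\varepsilon c(\overline{\bu}_h,\overline{\delta}_u,\bw_h^n)=\varepsilon c(\overline{\bxi}_h,\overline{\delta}_u,\bw_h^n)-\varepsilon c(\overline{\delta}_u,\overline{\delta}_u,\bw_h^n)$ (superscripts $n-1/2$ dropped). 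The cubic piece equals $\varepsilon((\overline{\delta}_u\cdot\nabla)\bw_h^n,\overline{\delta}_u)+\frac{\varepsilon}{2}((\nabla\cdot\overline{\delta}_u)\overline{\delta}_u,\bw_h^n)$. Since $\bw_h^n$ is controlled only in $H^1$, reducing this to $\frac14\Vert\overline{\delta}_u\Vert^2+\frac{\mu}{4}\Vert\nabla\cdot\overline{\delta}_u\Vert^2+\widetilde{C}\varepsilon^2\Vert\nabla\bw_h^n\Vert^2$ requires a bound on $\Vert\overline{\delta}_u\Vert_{L^\infty}$, i.e.\ on $\Vert\overline{\bu}_h\Vert_{L^\infty}$, that is uniform in $h$ and $\nu$. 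The energy estimate does not provide it. Interpolating $L^4$ between $L^2$ and $H^1$ instead would set $\Vert\nabla\overline{\delta}_u\Vert$ against the $\nu$-dissipation and cost $\nu^{-1}$. The paper obtains $\Vert\bu_h^n\Vert_{L^\infty}\le C_6$ from Theorem \ref{op:cn:h1}, which gives $\Vert\bu_h^n-{\rm P}_{\text{st}}\bu(t_n)\Vert\lesssim\tau^2+h^2+\varepsilon$, together with the inverse inequality. This forces the extra coupling $\varepsilon\le h^{d/2}$, which is used in the proof though not listed in the statement. Your proposal has no such a priori bound, so the estimate does not close as written. These terms also feed $\frac14\Vert\overline{\delta}_u\Vert^2$ contributions into the Gr\"onwall coefficient; together with the summation-by-parts remainder, this is where the $\frac54$ comes from. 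So the step restriction is not fixed by $C_5+\frac{C_5^2}{4\mu}$ alone.

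Your route to $\Vert\partial_\tau r_h^n\Vert$, $\tau\sum_i\Vert\partial_\tau r_h^i\Vert^2$ and $\tau\sum_i\Vert\partial_\tau^2 r_h^{i+1}\Vert^2$ differs from the paper's, and it fails at ``recover $\partial_\tau E_p$ via inf-sup from the momentum equation''. In \eqref{NS:mom:eq:cn} the pressure enters only through $\overline{r}_h^{n-1/2}$. So inf-sup applied to the time-differenced momentum error equation controls only averages such as $(\overline{r}_h^{n+1/2}-\overline{r}_h^{n-1/2})/\tau=\frac12(\partial_\tau r_h^{n+1}+\partial_\tau r_h^{n})$. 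But $\mathbb{K}_3$ contains the nodal quotient $\partial_\tau r_h^n=\frac{2}{\tau}(\overline{r}_h^{n-1/2}-r_h^{n-1})$. It is generated by the recursion $r_h^n=2\overline{r}_h^{n-1/2}-r_h^{n-1}$, whose $(-1)^n$ mode the momentum equation never sees. In addition, the second-difference sum would need a further level of time differencing (third differences of the velocity error), with matching regularity and compatibility at $t=0$, none of which you supply. The paper instead reads all three quantities off the nodal estimate $\sum_i\tau\Vert r_h^i-{\rm Q}_{\text{st}}p(t_i)\Vert^2\le K(T)(\tau^4+h^4)$ of Lemma \ref{op:reg:ns}, dividing differences of errors by powers of $\tau$. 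Some nodal information of this kind is unavoidable. You are right that this is the delicate point, however. The paper's resulting bounds, e.g.\ $\frac{\tau^4+h^4}{\tau^4}$ for the second differences, are $\mathcal{O}(1)$ only if $h\lesssim\tau$. That is a lower bound on $\tau$, which the hypothesis $\tau\le h^{\frac12+\frac{d}{4}}$ does not provide.
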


\begin{proof}
To have the difference between $\bm{\xi}_h^n$ and $\bu_h^n$, we define $E_u^n=\bm{\xi}_h^n-\bu_h^n$, and $E_p^n=r_h^n-p_h^n$, then
\begin{subequations}\label{eq:gdiv:be:eq:l2}
	\begin{empheq}[left=\empheqlbrace]{align}
		& \textstyle
		\left(\frac{E_u^n-E_u^{n-1}}{\tau}, \bv_h\right)+\nu\left(\nabla \overline{E}_u^{n-1/2}, \nabla \bv_h\right) +\mu\left(\nabla\cdot \overline{E}_u^{n-1/2}, \nabla\cdot\bv_h\right)\notag\\
		&\textstyle 
		\qquad  
		+ c(\overline{\bm{\xi}}_h^{n-1/2}, \overline{\bm{\xi}}_h^{n-1/2}, \bv_h)   
		- c\left( \overline{\bu}_h^{n-1/2}, \overline{\bu}_h^{n-1/2}, \bv_h\right) 
		-\left( \overline{E}_p^{n-1/2}, \nabla \cdot \bv_h\right)=0 , \label{eq:mom:eq:l2}\\
		& \textstyle
		\varepsilon \left( \frac{E_p^n-E_p^{n-1}}{\tau}, q_h\right) + \left( \nabla\cdot \overline{E}_u^{n-1/2}, q_h\right)	= \varepsilon\left(\frac{r_h^n - r_h^{n-1}}{\tau}, q_h\right).\label{eq:pres:eq:l2}
	\end{empheq}
\end{subequations}

Let $\bv_h= \overline{E}_u^{n-1/2} $ and $q_h= \overline{E}_p^{n-1/2}$,  there holds
\begin{equation}\label{l2}
	\begin{aligned}
		&\textstyle	 \frac{\Vert E_u^n \Vert^2 -   \Vert E_u^{n-1} \Vert^2}{2\tau}   + \varepsilon  \frac{\Vert E_p^n \Vert^2 -   \Vert E_p^{n-1} \Vert^2}{2\tau}+ \nu\Vert\nabla \overline{E}_u^{n-1/2}\Vert^2 +\mu\Vert\nabla\cdot \overline{E}_u^{n-1/2} \Vert^2\\
		&
		= ~ \textstyle
		- c(\overline{\bm{\xi}}_h^{n-1/2}, \overline{\bm{\xi}}_h^{n-1/2},  \overline{E}_u^{n-1/2})   
		+ c\left( \overline{\bu}_h^{n-1/2},  \overline{\bu}_h^{n-1/2},  \overline{E}_u^{n-1/2}\right)+\varepsilon\left(\frac{r_h^n - r_h^{n-1}}{\tau},  \overline{E}_p^{n-1/2}\right) 
		:=\sum\limits_{i=1}^{3}\mathbb{K}_i.
	\end{aligned}
\end{equation}
where by the skew-symmetric property of the trilinear form $c(\cdot,\cdot,\cdot)$ and integration-by-parts, we have
\begin{equation}\label{nonlinear:l2}
	\begin{aligned}
		\mathbb{K}_1 + \mathbb{K}_2 &\textstyle =   	- c(\overline{\bm{\xi}}_h^{n-1/2}, \overline{\bm{\xi}}_h^{n-1/2},  \overline{E}_u^{n-1/2})   
		+ c\left( \overline{\bu}_h^{n-1/2},  \overline{\bu}_h^{n-1/2},  \overline{E}_u^{n-1/2}\right) \\
		&\textstyle = - c(\overline{E}_u^{n-1/2}, \overline{\bm{\xi}}_h^{n-1/2},  \overline{E}_u^{n-1/2})  \\
		&\textstyle = -\frac{1}{2}\left((\overline{E}_u^{n-1/2}\cdot\nabla) \overline{\bm{\xi}}_h^{n-1/2}, \overline{E}_u^{n-1/2} \right)+\frac{1}{2}\left((\overline{E}_u^{n-1/2}\cdot\nabla)\overline{E}_u^{n-1/2} ,\overline{\bm{\xi}}_h^{n-1/2}  \right)
		\\
		&\textstyle = -\frac{1}{2}\left(\nabla\cdot\overline{E}_u^{n-1/2} \overline{E}_u^{n-1/2}, \overline{\bm{\xi}}_h^{n-1/2} \right)-
		\left((\overline{E}_u^{n-1/2}\cdot\nabla) \overline{\bm{\xi}}_h^{n-1/2}, \overline{E}_u^{n-1/2} \right)\\
		&\textstyle \leq \frac{1}{2} C_{5}\Vert\nabla\cdot \overline{E}_u^{n-1/2} \Vert\Vert \overline{E}_u^{n-1/2} \Vert  + C_5\Vert  \overline{E}_u^{n-1/2} \Vert^2\\
		&\textstyle \leq \frac{\mu}{4}\Vert\nabla\cdot \overline{E}_u^{n-1/2} \Vert^2+\left(\frac{C_{5}^2}{4\mu}+C_{5}\right)\Vert \overline{E}_u^{n-1/2} \Vert ^2.
	\end{aligned}
\end{equation}

As for $\mathbb{K}_3$, there exists $ \psi_h^{n-1}\in \bX_h$, $\forall q_h\in M_h$, such that  
\begin{equation*}
	\textstyle\varepsilon\left(\frac{r_h^n - r_h^{n-1}}{\tau},   q_h \right)= \varepsilon\left(\nabla\cdot \psi_h^{n-1/2},  q_h \right), \text{ and } \textstyle \Vert\nabla \psi_h^{n-1/2}\Vert\leq C_{\text{inf}} \left\Vert \frac{r_h^n - r_h^{n-1}}{\tau} \right\Vert.
\end{equation*}
There also holds 
$	\textstyle \Vert\nabla \frac{\psi_h^{n-1/2} - \psi_h^{n-3/2}}{\tau}\Vert\leq C_{\text{inf}} \left\Vert  \frac{ r_h^n - 2r_h^{n-1} +   r_h^{n-2} }{\tau^2}\right\Vert.$ 
Meanwhile, 
\begin{equation*}
	\begin{aligned}
		\textstyle 
		\varepsilon\left( \nabla\cdot  \psi_h^{n-1/2}, \overline{E}_p^{n-1/2}\right) &=  \textstyle	\varepsilon\left(\frac{E_u^n-E_u^{n-1}}{\tau},  \psi_h^{n-1/2}\right)+\nu\varepsilon\left(\nabla \overline{E}_u^{n-1/2}, \nabla  \psi_h^{n-1/2}\right) + \varepsilon \mu \left(\nabla \cdot \overline{E}_u^{n-1/2}, \nabla \cdot  \psi_h^{n-1/2}\right) 
		\\
		&   \quad\textstyle
		 \textstyle +\varepsilon c\left(\overline{\bm{\xi}}_h^{n-1/2}, \overline{\bm{\xi}}_h^{n-1/2}, \bw_h^{n-1/2}\right)  -\varepsilon c\left(\overline{\bu}_h^{n-1 / 2}, \overline{\bu}_h^{n-1/2},  \psi_h^{n-1/2}\right)
		\\
		& \textstyle  \leq  \varepsilon\left(\frac{E_u^n-E_u^{n-1}}{\tau},  \psi_h^{n-1/2}\right)
		+ \frac{\nu}{4} \Vert\nabla \overline{E}_u^{n-1/2}\Vert^2 
		+\nu \varepsilon^2 \Vert\nabla \psi_h^{n-1/2}\Vert^2
		+ \frac{\mu}{4} \Vert\nabla \cdot \overline{E}_u^{n-1/2}\Vert^2  \\
		& \quad \textstyle  + \mu \varepsilon^2 \Vert\nabla  \psi_h^{n-1/2}\Vert^2 
		+ \varepsilon c\left(\overline{E}_u^{n-1/2}, \overline{\bm{\xi}}_h^{n-1/2}, \psi_h^{n-1/2}\right) + \varepsilon c\left(\overline{u}_h^{n-1/2}, \overline{E}_u^{n-1/2}, \psi_h^{n-1/2}\right),
	\end{aligned}
\end{equation*}
and
\[
\begin{aligned}
	\textstyle  \varepsilon c\left(\overline{E}_u^{n-1/2}, \overline{\bm{\xi}}_h^{n-1/2}, \psi_h^{n-1/2}\right) 
	&\textstyle = \frac{\varepsilon}{2} \left((\overline{E}_u^{n-1/2} \cdot \nabla) \overline{\bm{\xi}}_h^{n-1/2}, \psi_h^{n-1/2}\right) - \frac{\varepsilon}{2} \left((\overline{E}_u^{n-1/2} \cdot \nabla) \psi_h^{n-1/2}, \overline{\bm{\xi}}_h^{n-1/2}\right) \\
	&\textstyle = \varepsilon \left((\overline{E}_u^{n-1/2} \cdot \nabla) \overline{\bm{\xi}}_h^{n-1/2}, \psi_h^{n-1/2}\right) + \frac{\varepsilon}{2} \left(\nabla \cdot \overline{E}_u^{n-1/2} \psi_h^{n-1/2}, \overline{\bm{\xi}}_h^{n-1/2}\right) \\
	&\textstyle \leq \varepsilon \Vert\overline{E}_u^{n-1/2}\Vert \cdot \Vert\nabla \overline{\bm{\xi}}_h^{n-1/2}\Vert_{L^\infty} \Vert\psi_h^{n-1/2}\Vert + \frac{\varepsilon}{2} \Vert\nabla \cdot \overline{E}_u^{n-1/2}\Vert \cdot \Vert\psi_h^{n-1/2}\Vert \cdot \Vert\overline{\bm{\xi}}_h^{n-1/2}\Vert_{L^\infty} \\
	&\textstyle \leq \frac14\Vert\overline{E}_u^{n-1/2}\Vert^2  + \frac{\mu}{4} 
\Vert	\nabla \cdot \overline{E}_u^{n-1/2}\Vert^2  + \varepsilon^2 C_5^2 C_{Pi}^2 (1+\frac{1}{4\mu})\Vert\nabla \psi_h^{n-1/2}\Vert^2.
\end{aligned}
\]
In addition,
$$\varepsilon c\left(\overline{\bu}_h^{n-1/2}, \overline{E}_u^{n-1/2}, \psi_h^{n-1/2}\right)=-\varepsilon c\left(\overline{E}_u^{n-1/2}, \overline{E}_u^{n-1/2}, \psi_h^{n-1/2}\right)+ \varepsilon c\left(\overline{\bm{\xi}}_h^{n-1/2}, \overline{E}_u^{n-1/2}, \psi_h^{n-1/2}\right),$$
where 
\[
\begin{aligned}
	\textstyle -\varepsilon c\left(\overline{E}_u^{n-1/2}, \overline{E}_u^{n-1/2}, \psi_h^{n-1/2}\right)
	&\textstyle =-\frac{\varepsilon}{2} \left((\overline{E}_u^{n-1/2} \cdot \nabla) \overline{E}_u^{n-1/2}, \psi_h^{n-1/2}\right) + \frac{\varepsilon}{2} \left((\overline{E}_u^{n-1/2} \cdot \nabla) \psi_h^{n-1/2}, \overline{E}_u^{n-1/2}\right) \\
	&\textstyle =  \frac{\varepsilon}{2} \left( \nabla\cdot \overline{E}_u^{n-1/2} \overline{E}_u^{n-1/2}, \psi_h^{n-1/2}\right) +  \varepsilon \left((\overline{E}_u^{n-1/2} \cdot \nabla) \psi_h^{n-1/2}, \overline{E}_u^{n-1/2}\right).
\end{aligned}
\]
Because
\begin{equation*}
	\Vert \bu_h^n - P_{st} \bu(t_n)\Vert_{L^\infty} \leq C_{\text{inv}}h^{-\frac{d}{2}}\Vert \bu_h^n - P_{st} \bu(t_n)\Vert \leq C_{\text{inv}}h^{-\frac{d}{2}} \mathcal{B}(T)(\tau^2 + h^2 + \epsilon),
\end{equation*}
so if $\varepsilon\leq h^{\frac{d}{2}}$, we have 
\[
\Vert \bu_h^n - P_{st} \bu(t_n)\Vert_{L^\infty} \leq  3C_{\text{inv}}\mathcal{B}(T).
\]
Therefore,
\begin{equation*}
	\Vert \bu_h^n \Vert_{L^\infty}\leq \Vert  P_{st} \bu(t_n)\Vert_{L^\infty}+3C_{\text{inv}}\mathcal{B}(T):=C_{6}, \text{ and } \Vert \overline{E}_u^{n-1/2} \Vert_{L^\infty}\leq \frac{C_5+C_{6}}{2}.
\end{equation*}
\begin{equation*}
\textstyle 	-\varepsilon c\left(\overline{E}_u^{n-1/2}, \overline{E}_u^{n-1/2}, \psi_h^{n-1/2}\right) \leq \frac{\mu}{4} \Vert \nabla\cdot \overline{E}_u^{n-1/2}\Vert^2 + \frac14\Vert \overline{E}_u^{n-1/2}\Vert^2
	+  \varepsilon^2\left(\frac{C_5+C_{6}}{2}\right)^2 \left( \frac{1}{4\mu}C_{Pi}^2 + 1\right)\Vert \nabla\psi_h^{n-1/2}\Vert^2,
\end{equation*}
and
\begin{equation*}
	\varepsilon c\left( \overline{\bm{\xi}}^{n-1/2}, \overline{E}_u^{n-1/2}, \psi_h^{n-1/2}\right) = - \varepsilon( (\overline{\bm{\xi}}^{n-1/2}\cdot\nabla)\psi_h^{n-1/2},  \overline{E}_u^{n-1/2})
	\leq \frac14 \Vert \overline{E}_u^{n-1/2} \Vert^2 + \varepsilon^2C_5^2\Vert \nabla \psi_h^{n-1/2}\Vert^2.
\end{equation*}
Combining all the above estimates with $\Vert\nabla \psi_h^{n-1/2}\Vert\leq C_{\text{inf}} \left\Vert\frac{r_h^n - r_h^{n-1}}{\tau} \right\Vert$, we have
\begin{equation*}
	\begin{aligned}
		&\textstyle	 \frac{\Vert E_u^n \Vert^2 -   \Vert E_u^{n-1} \Vert^2}{2\tau}   + \varepsilon  \frac{\Vert E_p^n \Vert^2 -   \Vert E_p^{n-1} \Vert^2}{2\tau}+ \frac{3\nu}{4}\Vert\nabla \overline{E}_u^{n-1/2}\Vert^2 \\
		&\textstyle 
		\leq    \varepsilon\left(\frac{E_u^n-E_u^{n-1}}{\tau},  \psi_h^{n-1/2}\right) + \left(\frac{C_{5}^2}{4\mu}+C_{5} + \frac34\right)\Vert \overline{E}_u^{n-1/2} \Vert ^2\\
		&\quad
		\textstyle +\varepsilon^2 \left( \nu +\mu + C_5^2 C_{Pi}^2 \left(1+\frac{1}{4\mu} \right) + \left(\frac{C_5+C_{6}}{2}\right)^2 \left( \frac{1}{4\mu}C_{Pi}^2 + 1\right) + C_5^2\right) C_{\text{inf}}^2 \left\Vert\frac{r_h^n - r_h^{n-1}}{\tau} \right\Vert^2.
	\end{aligned}
\end{equation*}
Changing the index $n$ to $i$, summing $i$ from 1 to $n$, it yields 
\begin{equation*}
	\begin{aligned}
		& \textstyle	 \frac{\Vert E_u^n \Vert^2 -   \Vert E_u^{0} \Vert^2}{2\tau}   + \varepsilon  \frac{\Vert E_p^n \Vert^2 -   \Vert E_p^{0} \Vert^2}{2\tau}+ \frac{3\nu}{4} \sum\limits_{i=1}^n\Vert\nabla \overline{E}_u^{i-1/2}\Vert^2 \\
		&\textstyle 
		\leq    \varepsilon \sum\limits_{i=1}^n \left(\frac{E_u^i-E_u^{i-1}}{\tau},  \psi_h^{i-1/2}\right) + \left(\frac{C_{5}^2}{4\mu}+C_{5} + \frac34\right) \sum\limits_{i=1}^n\Vert \overline{E}_u^{i-1/2} \Vert ^2\\
		&\quad
		\textstyle +\varepsilon^2 \left( \nu +\mu + C_5^2 C_{Pi}^2 \left(1+\frac{1}{4\mu} \right) + \left(\frac{C_5+C_{6}}{2}\right)^2 \left( \frac{1}{4\mu}C_{Pi}^2 + 1\right) + C_5^2\right) C_{\text{inf}}^2 \sum\limits_{i=1}^n \left\Vert\frac{r_h^i - r_h^{i-1}}{\tau} \right\Vert^2,
	\end{aligned}
\end{equation*}
where
\begin{equation*}
	\begin{aligned}
\textstyle 	\varepsilon\sum\limits_{i=1}^n \left(\frac{E_u^i-E_u^{i-1}}{\tau},  \psi_h^{i-1/2}\right)  &\textstyle =  \frac{\varepsilon}{\tau}\left(E_u^n, \psi_h^{n-1/2}\right) - \varepsilon \sum\limits_{i=1}^{n-1} \left( E_u^i, \frac{\psi_h^{i+1/2} - \psi_h^{i-1/2}}{\tau}\right)\\
&\textstyle \leq \frac{1}{4\tau}\Vert E_u^n \Vert^2 + \frac{\varepsilon^2}{\tau} C_{\text{inf}}^2 C_{\text{Pi}}^2 \Vert \frac{r_h^n - r_h^{n-1}}{\tau} \Vert^2
+ \frac14 \sum\limits_{i=1}^{n-1} \Vert E_u^i \Vert^2 
+  \varepsilon^2C_{\text{inf}}^2 C_{\text{Pi}}^2 \sum\limits_{i=1}^{n-1} \left\Vert  \frac{ r_h^{i+1}- 2r_h^i +   r_h^{i-1} }{\tau^2}\right \Vert^2.
\end{aligned}
\end{equation*}
Hence, we can further have
\begin{equation}\label{app:roug}
	\begin{aligned}
		& \textstyle	 \Vert E_u^n \Vert^2 -   \Vert E_u^{0} \Vert^2   + \varepsilon \Vert E_p^n \Vert^2 -  \varepsilon \Vert E_p^{0} \Vert^2+  \nu \sum\limits_{i=1}^n \tau \Vert\nabla \overline{E}_u^{i-1/2}\Vert^2 \\
		&\textstyle 
		\leq   \frac{1}{2}\Vert E_u^n \Vert^2 + 2\varepsilon^2  C_{\text{inf}}^2 C_{\text{Pi}}^2 \Vert \frac{r_h^n - r_h^{n-1}}{\tau} \Vert^2
		+  2\varepsilon^2C_{\text{inf}}^2 C_{\text{Pi}}^2 \sum\limits_{i=1}^{n-1} \tau \left\Vert  \frac{ r_h^{i+1}- 2r_h^i +   r_h^{i-1} }{\tau^2}\right \Vert^2  + \left(\frac{C_{5}^2}{4\mu}+ C_{5} + \frac54 \right) \sum\limits_{i=1}^n \tau \Vert E_u^i \Vert ^2\\
		&\quad
		\textstyle +2\varepsilon^2 \left( \nu +\mu + C_5^2 C_{Pi}^2 \left(1+\frac{1}{4\mu} \right) + \left(\frac{C_5+C_{6}}{2}\right)^2 \left( \frac{1}{4\mu}C_{Pi}^2 + 1\right) + C_5^2\right) C_{\text{inf}}^2 \sum\limits_{i=1}^n \tau \left\Vert\frac{r_h^i - r_h^{i-1}}{\tau} \right\Vert^2.
	\end{aligned}
\end{equation}
Now we turn to show the boundedness of $\textstyle \left\Vert \frac{r_h^n - r_h^{n-1}}{\tau} \right\Vert^2$, $\textstyle \sum\limits_{i=1}^n \tau\left\Vert \frac{r_h^i - r_h^{i-1}}{\tau} \right\Vert^2$, and $\textstyle \sum\limits_{i=1}^n \tau\left\Vert  \frac{ r_h^{i+1}- 2r_h^{i} +   r_h^{i-1} }{\tau^2}\right \Vert^2$. Given $\tau\leq h^{\frac12+\frac{d}{4}}$ and Lemma \ref{op:reg:ns}, we have
\begin{equation*}
	\begin{aligned}
		\textstyle \left\Vert \frac{r_h^n - r_h^{n-1}}{\tau} \right\Vert^2 \leq&  \textstyle~
		3  \Vert  \partial_\tau E_r^n \Vert^2 +  3  \Vert  \partial_\tau p(t_n) - {\rm Q}_{\text{st}}p(t_n)  \Vert^2 +  3   \Vert    \partial_\tau p(t_n)  \Vert^2
		\leq   \widetilde{C}\left( \frac{\tau^4+h^4}{\tau^3} +  \frac{h^4}{\tau^2}+ 1 \right) \leq \widetilde{C},
	\end{aligned}
\end{equation*}
\begin{equation*}
\begin{aligned}
	\textstyle \sum\limits_{i=1}^n \tau\left\Vert \frac{r_h^i - r_h^{i-1}}{\tau} \right\Vert^2 \leq&  
	\textstyle 3\sum\limits_{i=1}^n \tau \Vert  \partial_\tau E_r^i \Vert^2 +  3 \sum\limits_{i=1}^n \tau\Vert  \partial_\tau p(t_i) - {\rm Q}_{\text{st}}p(t_i)  \Vert^2 +  3  \sum\limits_{i=1}^n \tau\Vert    \partial_\tau p(t_i)  \Vert^2 \leq \widetilde{C} (\frac{\tau^4+h^4}{\tau^2} + \frac{h^4}{\tau^2}+1)\leq \widetilde{C},
\end{aligned}
\end{equation*}
and,
\begin{equation*}
	\begin{aligned}
		\textstyle \sum\limits_{i=1}^n \tau \left\Vert  \frac{ r_h^{i+1}- 2r_h^{i} +   r_h^{i-1} }{\tau^2}\right \Vert^2 \leq&~  \textstyle 3\sum\limits_{i=1}^n \tau \Vert \partial_\tau^2 E_r^{i+1} \Vert^2 + 3 \sum\limits_{i=1}^n \tau\Vert \partial_\tau^2 p(t_{i+1})  - {\rm Q}_{\text{st}}\partial_\tau^2 p(t_{i+1}) \Vert^2 + 3\sum\limits_{i=1}^n \tau\Vert  \partial_\tau^2 p(t_{i+1})  \Vert^2\\
		\leq& \widetilde{C} \left(\frac{\tau^4+h^4}{\tau^4} + \frac{h^4}{\tau^4} + 1 \right)\leq \widetilde{C}.
	\end{aligned}
\end{equation*}
Thereafter, \eqref{app:roug} can be simplifed as
\begin{equation}\label{app:final}
	\begin{aligned}
		& \textstyle\frac12 \Vert E_u^n \Vert^2   + \varepsilon \Vert E_p^n \Vert^2  +  \nu \sum\limits_{i=1}^n \tau \Vert\nabla \overline{E}_u^{i-1/2}\Vert^2 
		\leq   
		 \left(\frac{C_{5}^2}{4\mu}+ C_{5} + \frac54 \right) \sum\limits_{i=1}^n \tau \Vert E_u^i \Vert ^2 + \widetilde{C} \varepsilon^2.
	\end{aligned}
\end{equation}
Lastly, when $\tau\leq \frac14 \left(\frac{C_{5}^2}{4\mu}+ C_{5} + \frac54 \right)^{-1}$, by Gr\"onwall's inequality, we can finally have the following approximation
\begin{equation*}
	\textstyle\frac12 \Vert E_u^n \Vert^2   + \varepsilon \Vert E_p^n \Vert^2  +  \nu \sum\limits_{i=1}^n \tau \Vert\nabla \overline{E}_u^{i-1/2}\Vert^2 \leq  \widetilde{C} \varepsilon^2.
\end{equation*}

\end{proof}

Based on Lemma \ref{op:reg:ns} and Theorem \ref{approximation}, we can directly have the following corollary via the triangle inequality.
\begin{cor}\label{corolla}
Given $h\leq 1$, if $\textstyle \tau\leq \min\{h^{\frac12 + \frac{d}{4}}, \frac14 \left(\frac{C_{5}^2}{4\mu}+ C_{5} + \frac54 \right)^{-1}\}$, we can have the following estimate
\begin{equation}
	\textstyle	 \nu \tau\sum\limits_{i=1}^n \Vert \nabla\overline{e}_u^{i-1/2} \Vert^2 + \mu  \tau\sum\limits_{i=1}^n \Vert \nabla\cdot \overline{e}_u^{i-1/2} \Vert^2   \leq  \widetilde{C} \left( \varepsilon^2 +\tau^4+h^4\right),\text{ and }
	\textstyle	 \Vert e_u^n \Vert^2  \leq  \widetilde{C} \left( \varepsilon^2 +\tau^4+h^6\right).
\end{equation}	
\end{cor}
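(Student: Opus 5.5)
The plan is to write the ACM error as the sum of two pieces and use the triangle inequality. The first piece is the error of the auxiliary Navier--Stokes scheme \eqref{NS:gdiv:cn:eq} against the Stokes projection, controlled by Lemma \ref{op:reg:ns}. The second is the gap between that scheme and CN-ACM, controlled by Theorem \ref{approximation}.

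Concretely, since $e_u^n = {\rm P}_{\text{st}}\bu(t_n) - \bu_h^n$, I decompose
\begin{equation*}
e_u^n = \bigl({\rm P}_{\text{st}}\bu(t_n) - \bxi_h^n\bigr) + \bigl(\bxi_h^n - \bu_h^n\bigr),
\end{equation*}
and likewise for the time averages $\overline{e}_u^{i-1/2}$. Here the second bracket is the quantity called $E_u$ inside the proof of Theorem \ref{approximation}. I must keep track that the statement of that theorem reuses the symbol $E_u$ for the first bracket, so I will treat the two brackets as two distinct quantities.

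For the $L^2$ bound I use $\|a+b\|^2\le 2\|a\|^2+2\|b\|^2$.
\begin{itemize}
\item The second inequality of Lemma \ref{op:reg:ns} gives $\|{\rm P}_{\text{st}}\bu(t_n)-\bxi_h^n\|^2\le K(T)(\tau^4+h^6)$.
\item Theorem \ref{approximation} gives $\|\bxi_h^n-\bu_h^n\|^2\le 2\widetilde C\varepsilon^2$.
\end{itemize}
Adding these yields $\|e_u^n\|^2\le \widetilde C(\varepsilon^2+\tau^4+h^6)$, since $K(T)$ is independent of $\nu$.

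For the gradient term I argue the same way. Because $\overline{\phi}^{i-1/2}$ is an average of $\phi^i$ and $\phi^{i-1}$, we have $\|\nabla\overline{\phi}^{i-1/2}\|^2\le \frac12(\|\nabla\phi^i\|^2+\|\nabla\phi^{i-1}\|^2)$. Hence the Navier--Stokes part of $\nu\tau\sum\|\nabla\overline{e}_u^{i-1/2}\|^2$ is bounded by $K(T)(\tau^4+h^4)$ through the first inequality of Lemma \ref{op:reg:ns}. The initial term is harmless under the natural initialization $\bxi_h^0=\bu_h^0={\rm P}_{\text{st}}\bu_0$. The ACM-versus-NS part is bounded by $\widetilde C\varepsilon^2$ via Theorem \ref{approximation}.

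The divergence term needs separate handling.
\begin{itemize}
\item Since the Scott--Vogelius pair gives $\bm V_h\subseteq \bm V$, equation \eqref{NS:pres:eq:cm} makes $\nabla\cdot\overline{\bxi}_h^{i-1/2}=0$ pointwise.
\item The Stokes projection of a divergence-free field is also pointwise divergence-free, because $\nabla\cdot \bX_h\subset M_h$ and \eqref{stokes:proj} with $\nabla\cdot\bw=0$ forces $(\nabla\cdot{\rm P}_{\text{st}}\bw,q_h)=0$ for all $q_h$.
\item So the first bracket contributes nothing to $\mu\tau\sum\|\nabla\cdot\overline{e}_u^{i-1/2}\|^2$.
\item The remaining piece $\mu\tau\sum\|\nabla\cdot(\overline{\bxi}_h-\overline{\bu}_h)^{i-1/2}\|^2$ is not displayed in the statement of Theorem \ref{approximation}. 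It is, however, retained on the left side of its proof, in \eqref{l2}, after the $\mu/4$ absorptions. Rerunning that final Gr\"onwall step with this term kept bounds it by $\widetilde C\varepsilon^2$ as well.
\end{itemize}

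The main obstacle is this divergence contribution. It must come from the proof of Theorem \ref{approximation}, not its statement. I will check that after all the $\mu/4$ absorptions in \eqref{nonlinear:l2} and the $\psi_h$ estimates, a positive multiple of $\mu\|\nabla\cdot\overline{E}_u\|^2$ survives on the left. If the absorptions exactly consume it, the fallback is to rebalance the Young constants, for example taking $\mu/8$ in each absorption. This costs only larger $\mu^{-1}$-dependent constants and no $\nu^{-1}$ factors. Everything else is bookkeeping with constants independent of $\nu^{-1}$, $h$ and $\tau$.
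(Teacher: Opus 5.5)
Your proposal is correct and is the paper's argument: the paper's entire proof is the triangle inequality combining Lemma \ref{op:reg:ns} with Theorem \ref{approximation}. Your extra care with the divergence sum is warranted, since in the proof of Theorem \ref{approximation} the four $\frac{\mu}{4}$ absorptions (one in \eqref{nonlinear:l2}, three in the $\psi_h^{n-1/2}$ estimates) use up all of $\mu\Vert\nabla\cdot\overline{E}_u^{n-1/2}\Vert^2$, so your fallback is required rather than optional, and you should rebalance only the three $\psi_h^{n-1/2}$ absorptions. Their Young partners go into the $\widetilde{C}\varepsilon^2$ remainder, so the Gr\"onwall coefficient $\frac{C_5^2}{4\mu}+C_5+\frac54$, and with it the stated restriction on $\tau$, stays unchanged (halving the absorption in \eqref{nonlinear:l2} would enlarge that coefficient); alternatively, the first estimate is exactly the conclusion of Theorem \ref{op:cn:h1}, whose hypotheses the proof of Theorem \ref{approximation} already invokes through $\mathcal{B}(T)$.
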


\begin{remark}
	Corollary \ref{corolla} provides an alternative approach to obtaining the Reynolds number robust error estimate for the CN-ACM scheme \eqref{acm:gdiv:eq}. However, this result relies on the mesh constraint $ \tau\leq h^{\frac12 + \frac{d}{4}}$. Although Theorem \ref{op:cn:h1} does not yield the optimal $L^2$ error estimate, it establishes Reynolds number robustness independent of the mesh ratio.
\end{remark}
\section{Numerical Experiments}\label{NumExp}

\subsection{Convergence tests}
We investigate the convergence behavior of the CN-ACM scheme. The viscosity is taken as $\nu = 0.01$, and the stabilization parameter is set to $\mu = 1$. For the spatial discretization, we utilize the $P_2$-$P_1^{\text{disc}}$ finite element pair on meshes generated by Alfeld splitting (as shown in Figure \ref{fig:meshes}), with a mesh size of $h = 1/n$ and a time step of $\tau = T/N_T$. All numerical experiments are implemented using FreeFem++ \cite{hecht2012new}.

\begin{figure}[htbp]
	\centering
	\includegraphics[width=0.45\textwidth]{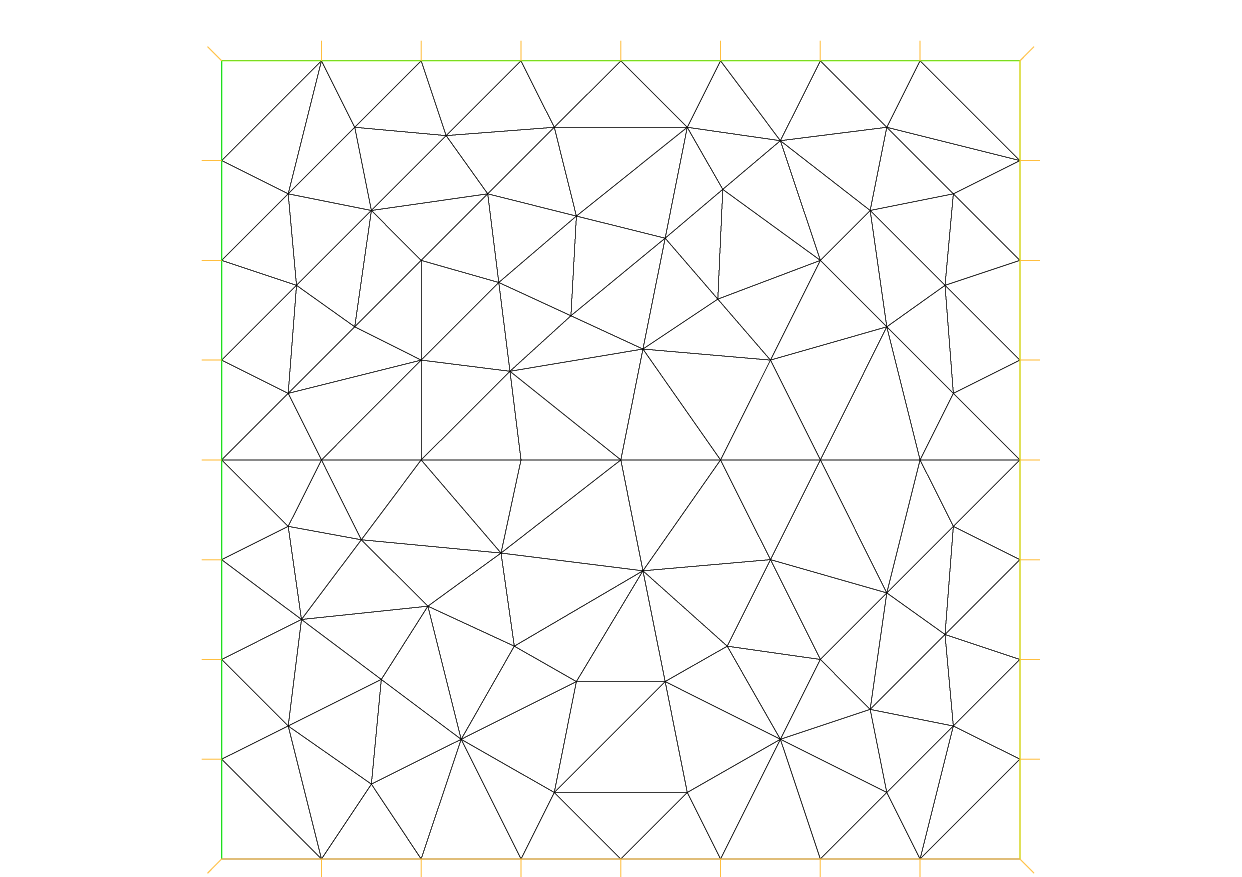}
	\includegraphics[width=0.45\textwidth]{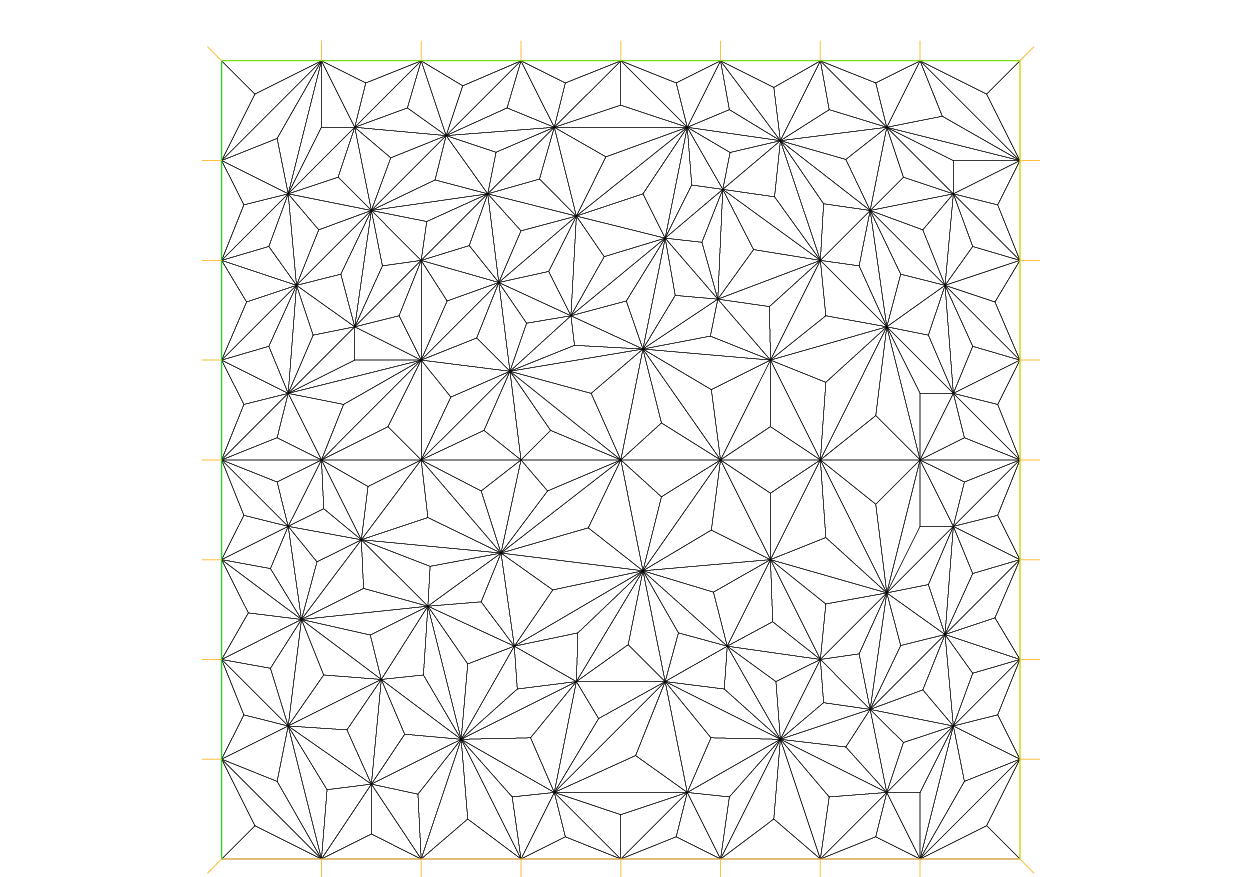}
	\caption{Initial Delaunay mesh (left) and its Alfeld splitting (right).}% and Powell-Sabin splitting (right)
	\label{fig:meshes}
\end{figure}

The numerical errors are measured using the $L^2$-norm and $H^1$-seminorm for the velocity, and the $L^2$-norm for the pressure. To validate the convergence rates, we adopt the following manufactured exact solutions:
\begin{align*}
	u(x,y,t) &= \sin(2\pi x)\sin(2\pi y)\, e^{-8\nu\pi^2 t},\\
	v(x,y,t) &= \cos(2\pi x)\cos(2\pi y)\, e^{-8\nu\pi^2 t},\\
	p(x,y,t) &= -\frac{1}{2}
	\left(\sin^2(2\pi x) + \cos^2(2\pi y)\right)
	e^{-16\nu\pi^2 t}.
\end{align*}
The corresponding external forcing term is identical to zero, i.e.,
$
\bm{f} = (f_1,f_2)^{T} = \mathbf{0}.
$

We first test the convergence rate with respect to $\varepsilon$. To this end, we fix $n=80$ and $N_T=80$, and gradually decrease $\varepsilon$ by a factor of 2 starting from $0.2$. The results are presented in Table \ref{tab:error_rate2}, which demonstrates a first-order convergence rate for both the $L^2$ and $H^1$ norms of the velocity, as well as the $L^2$ norm of the pressure. This implies that the estimates in Theorem \ref{op:cn:h1} and Corollary \ref{corolla} are optimal with respect to $\varepsilon$, and that the error constants do not contain the factor $1/\varepsilon$.

Furthermore, we evaluate the spatial and temporal convergence rates by simultaneously refining the mesh and time step. Starting from $n=8$ and $N_T=40$, both $n$ and $N_T$ are doubled at each step, with the parameter set to $\varepsilon = \tau^2$. The corresponding results are presented in Table \ref{tab:CN_convergence}. As the mesh is refined, the velocity error in the $L^2$-norm exhibits a convergence rate of $\mathcal{O}(h^3)$, while the velocity error in the $H^1$-seminorm and the pressure error in the $L^2$-norm both demonstrate optimal convergence rates of $\mathcal{O}(h^2)$. These findings strongly validate our theoretical predictions. 
\begin{table}[htbp]
	\centering
	\caption{Convergence results with $\nu=0.01$, $\mu=1$, $n=80$, $N_T=80$, and different $\varepsilon$.} 
	\label{tab:error_rate2} 
	\begin{tabular}{ccccccc}
		\hline
		$\varepsilon$ & $\|u-u_h\|_{L^2}$ & rate & $|u-u_h|_{H^1}$ & rate & $\|p-p_h\|_{L^2}$ & rate \\
		\hline
		0.2   & $4.664\mathrm{e}{-3}$ & --   & $5.938\mathrm{e}{-2}$ & --   & $2.446\mathrm{e}{-2}$ & --   \\ 
		0.1   & $2.374\mathrm{e}{-3}$ & 0.97 & $2.927\mathrm{e}{-2}$ & 1.02 & $1.036\mathrm{e}{-2}$ & 1.24 \\ 
		0.05  & $1.189\mathrm{e}{-3}$ & 1.00 & $1.461\mathrm{e}{-2}$ & 1.00 & $4.749\mathrm{e}{-3}$ & 1.13 \\ 
		0.025 & $5.940\mathrm{e}{-4}$ & 1.00 & $7.674\mathrm{e}{-3}$ & 0.93 & $2.278\mathrm{e}{-3}$ & 1.06 \\ 
		\hline
	\end{tabular}
\end{table}

\begin{table}[htbp]
	\centering
	\caption{Convergence results with $\nu=0.01$, $\mu=1$, $N_T=5n$, and $\varepsilon=\tau^2$.}
	\label{tab:CN_convergence} 
	\begin{tabular}{cccccccc}
		\toprule
		$n$ & $N_T$ & $\|u-u_h\|_{L^2}$ & rate & $|u-u_h|_{H^1}$ & rate & $\|p-p_h\|_{L^2}$ & rate \\ 
		\midrule
		8  & 40  & $3.082\mathrm{e}{-3}$ & --   & $2.329\mathrm{e}{-1}$ & --   & $3.353\mathrm{e}{-2}$ & --   \\
		16 & 80  & $3.615\mathrm{e}{-4}$ & 3.09 & $5.390\mathrm{e}{-2}$ & 2.11 & $8.582\mathrm{e}{-3}$ & 1.97 \\
		32 & 160 & $4.082\mathrm{e}{-5}$ & 3.15 & $1.207\mathrm{e}{-2}$ & 2.16 & $2.028\mathrm{e}{-3}$ & 2.08 \\
		64 & 320 & $5.437\mathrm{e}{-6}$ & 2.91 & $3.225\mathrm{e}{-3}$ & 1.90 & $5.268\mathrm{e}{-4}$ & 1.95 \\
		\bottomrule
	\end{tabular}
\end{table}

\subsection{Gresho vortex standing problem}

We next consider the Gresho standing vortex problem \cite{gresho1990theory}.
The domain $\Omega = (-0.5, 0.5)\times (-0.5, 0.5)$. The initial condition is given by the  solution of an steady Euler equation:
\begin{equation}
	\begin{aligned}
		r \leq 0.2:&
		\left\{
		\begin{array}{l}
			\bm u_0=\binom{-5 y}{5 x} \\
			p_0=12.5 r^2+K_1
		\end{array},\right. 
		\\
		r>0.4:&\left\{\begin{array}{l}
			\bm u_0=\binom{0}{0} \\
			p_0=0
		\end{array},\right. 
		\\
		0.2 \leq r \leq 0.4:&\left\{\begin{array}{l}
			\bm u_0=\left(\begin{array}{l}
				\frac{-2 y}{r}+5 y\\
				\frac{2x}{r} - 5 x
			\end{array}
			\right) \\
			p_0=12.5 r^2-20 r+4 \log (r)+K_2
		\end{array},\right.
	\end{aligned}
\end{equation}
where
$$
r = \sqrt{x^2+y^2},\quad K_2=(-12.5)(0.4)^2+20(0.4)^2-4 \log (0.4),\quad K_1=C_2-20(0.2)+4 \log (0.2) .
$$
The external force $\bm f = 0$, and the zero-Dirichlet boundary conditions is imposed. The viscosity number is set as $\nu=0.0$. The terminal time is chosen to be $T=5$. In this problem, the initial solution configuration will be well maintained along the time, with almost neglectible diffusion caused by the tiny viscosity. We first generate the Delaunay mesh with $h={1}/{60}$, then apply Alfeld splitting. The time step size is $\tau=0.005$. The corresponding snapshots are shown in Figure \ref{figure:gresho}. In addition, we also compare the evolutions of $\Vert \nabla\cdot \bm u\Vert$ for $\mu=1$ and $\mu=0$, which is present in Figure \ref{figure:gresho:divl2}. We can see that $\Vert \nabla\cdot \bm u\Vert$ with $\mu=1$ decreases along with time, however, it does not share such property when $\mu=0$. 

\begin{figure}[htbp]	
	\centering
	\includegraphics[width=0.3\textwidth]{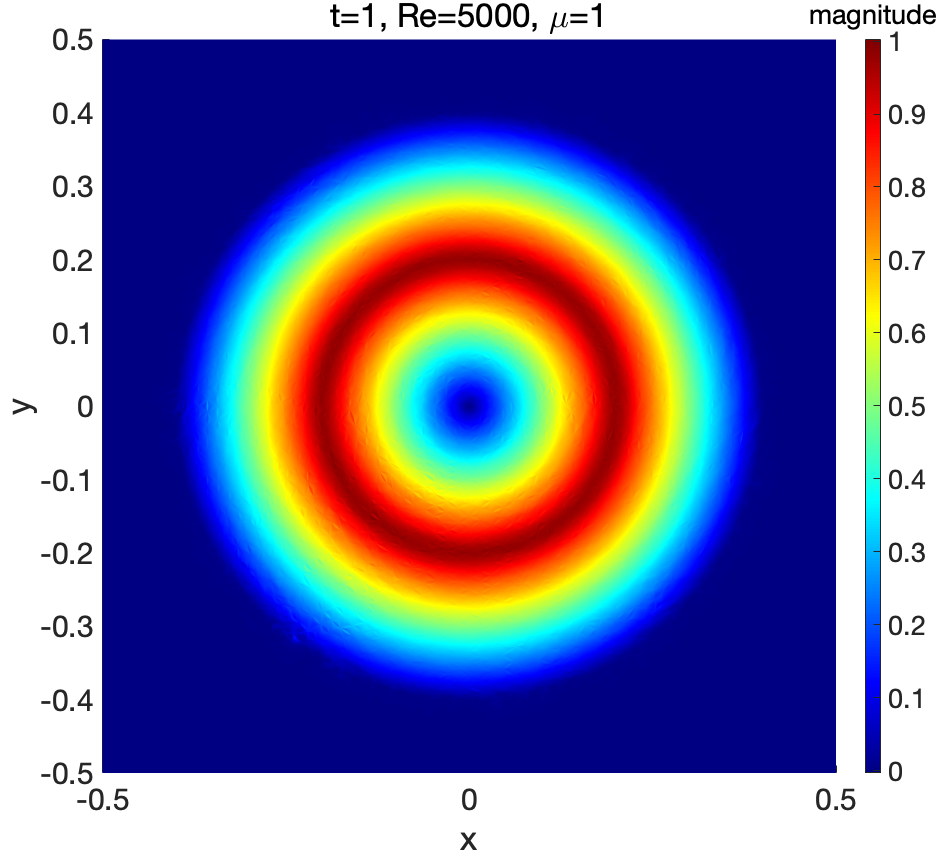}\hspace{4ex}
	\includegraphics[width=0.3\textwidth]{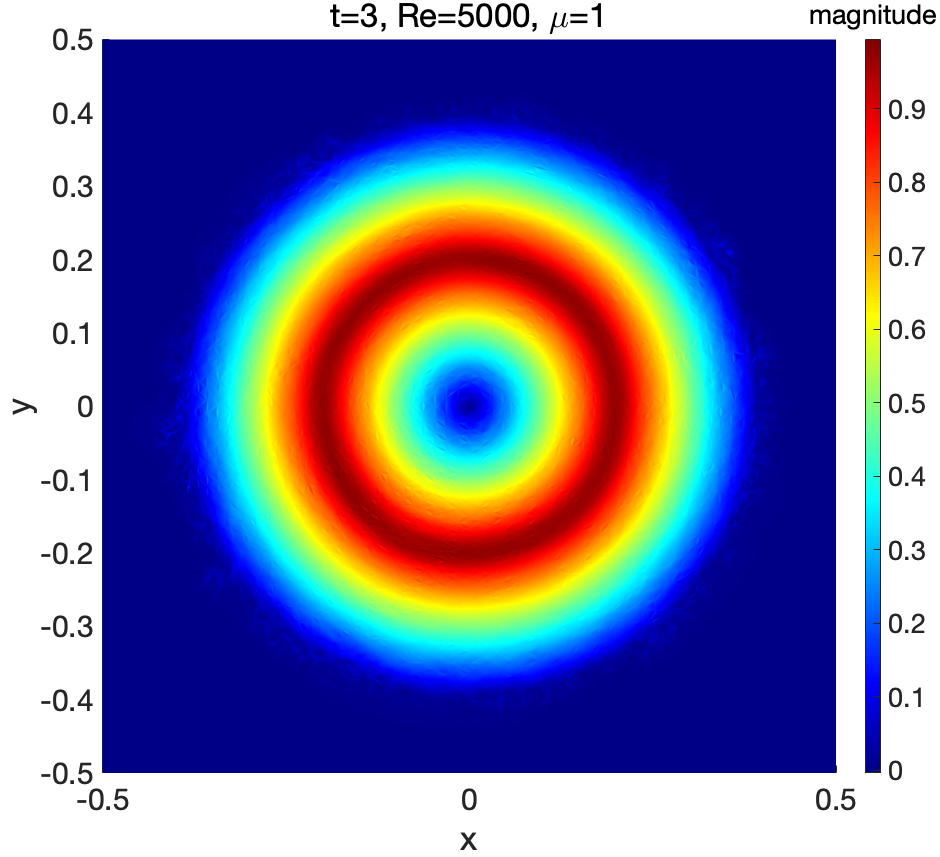}\hspace{4ex}
	\includegraphics[width=0.3\textwidth]{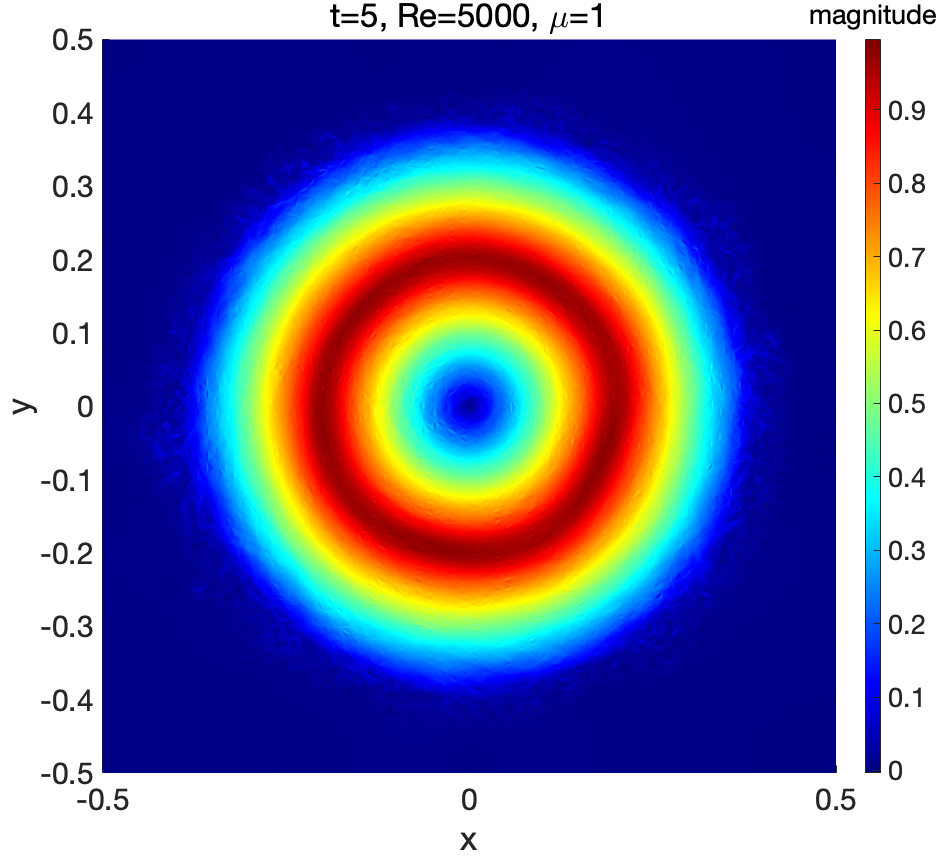}
	\caption{Time evolution of the velocity magnitude for the Gresho vortex standing problem at time $t=1, 3, \text{ and }5$.}
	\label{figure:gresho} 
\end{figure}

\begin{figure}[htbp]	
	\centering
	\includegraphics[width=0.35\textwidth]{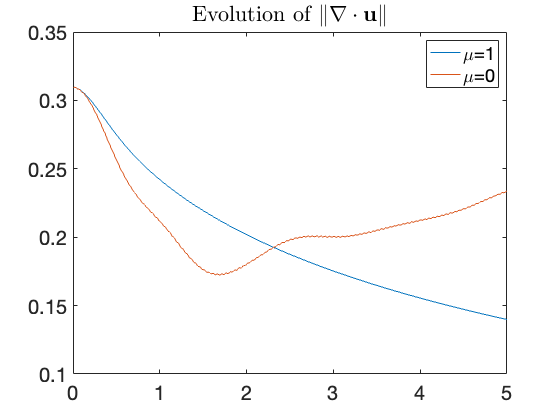}
	\caption{Evolution of $\Vert \nabla\cdot\bu \Vert$ for the Gresho vortex standing problem with $\mu=1$ and $\mu=0$.}
	\label{figure:gresho:divl2} 
\end{figure}

\subsection{Lid-driven cavity flow}

Next, we evaluate the robustness of the proposed CN-ACM scheme using the challenging lid-driven cavity flow benchmark \cite{1982High,JuWang2017}. For the 2D case, the computational domain is defined as $\Omega=(0,1)^2$. No-slip boundary conditions are imposed on the three stationary walls ($x = 0$, $x = 1$, and $y = 0$), while the top lid ($y = 1$) is driven by a tangential unit velocity. The kinematic viscosity is set to $\nu=2 \times 10^{-4}$ (corresponding to a Reynolds number of $Re=5000$), and the stabilization parameter is $\mu=1$. The computational mesh is constructed by applying Alfeld splitting to an initial Delaunay triangulation with $h=1/50$. We set the time step to $\tau=10^{-3}$ and the terminal time to $T=100$. Figure \ref{lid:evol} displays the contour plots of the velocity magnitude at $t=10, 20, 30, 40, 50$, and $100$, demonstrating the stability and robustness of the method.
Furthermore, Figure \ref{fig-mag-cont:mid} compares the velocity profiles along the centerlines at $t=100$ with the benchmark results from \cite{1982High}. The computed velocities exhibit excellent agreement with the reference data, particularly in capturing the local extrema and steep gradients, such as the $x$-component near $y=0.1$ and $y=0.97$ and the $y$-component near $x=0.1$ and $x=0.96$. These results confirm that the proposed CN-ACM scheme accurately captures the dynamical evolution of the velocity field.

\begin{figure}[htbp]
\centering
\begin{minipage}{0.32\textwidth}
\centering
\includegraphics[width=\textwidth]{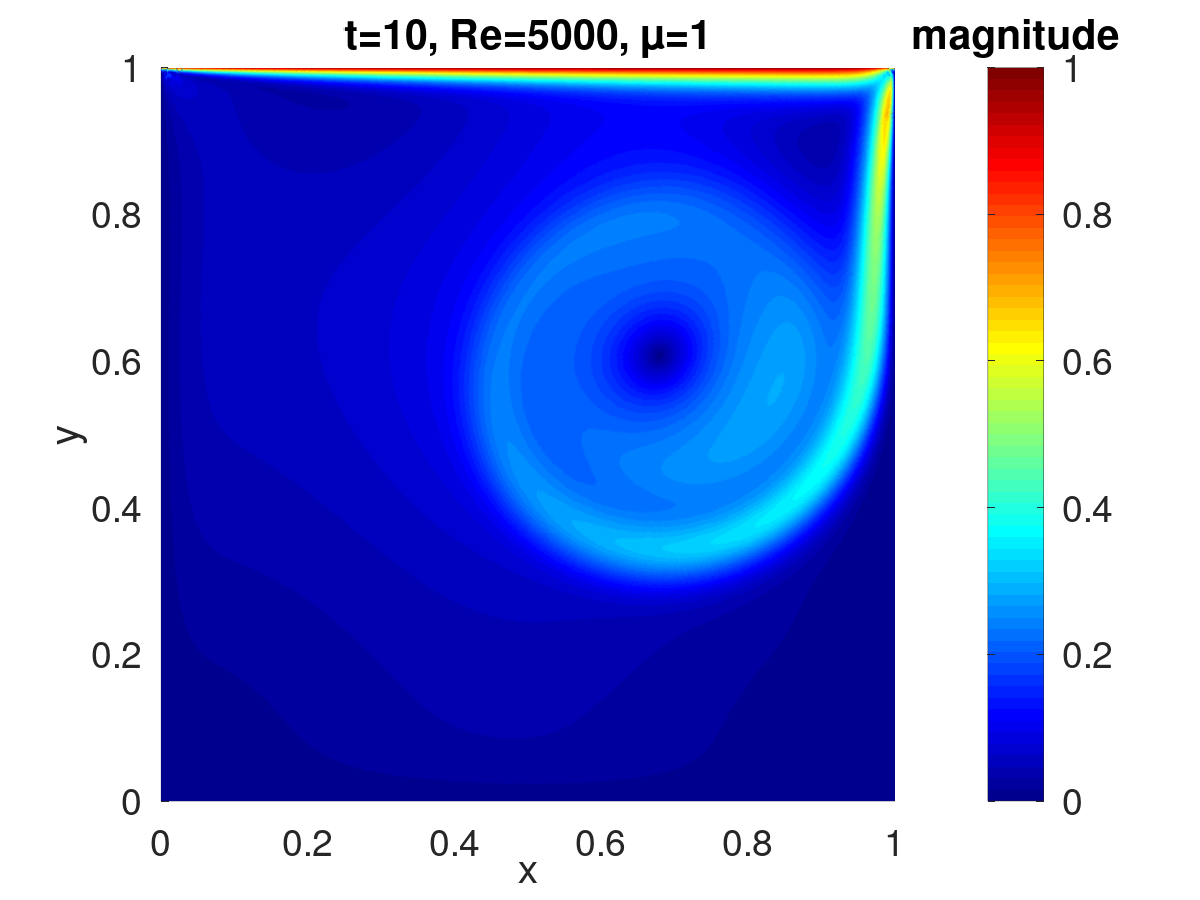}
\\ $(a)\ t=10$
\end{minipage}
\hfill
\begin{minipage}{0.32\textwidth}
\centering
\includegraphics[width=\textwidth]{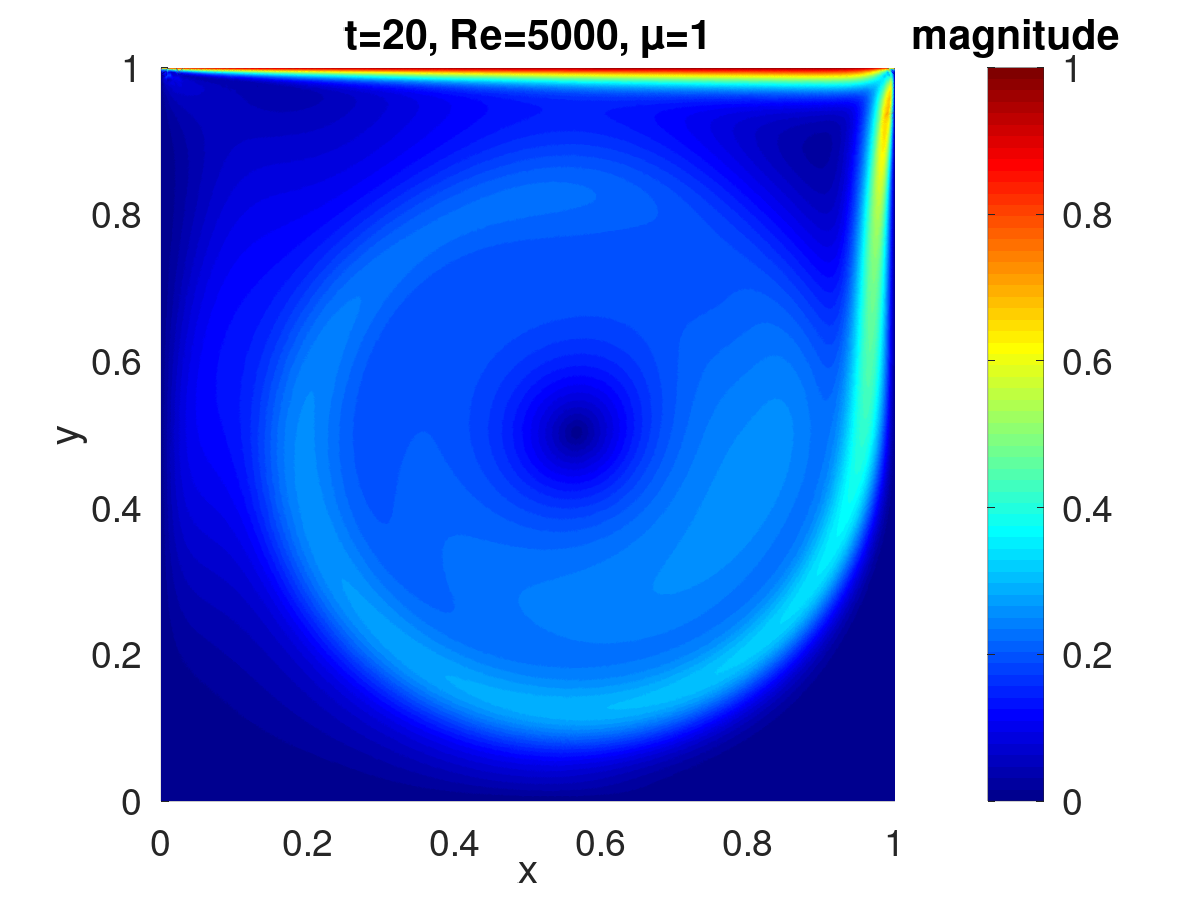}
\\ $(b)\ t=20$
\end{minipage}
\hfill
\begin{minipage}{0.32\textwidth}
\centering
\includegraphics[width=\textwidth]{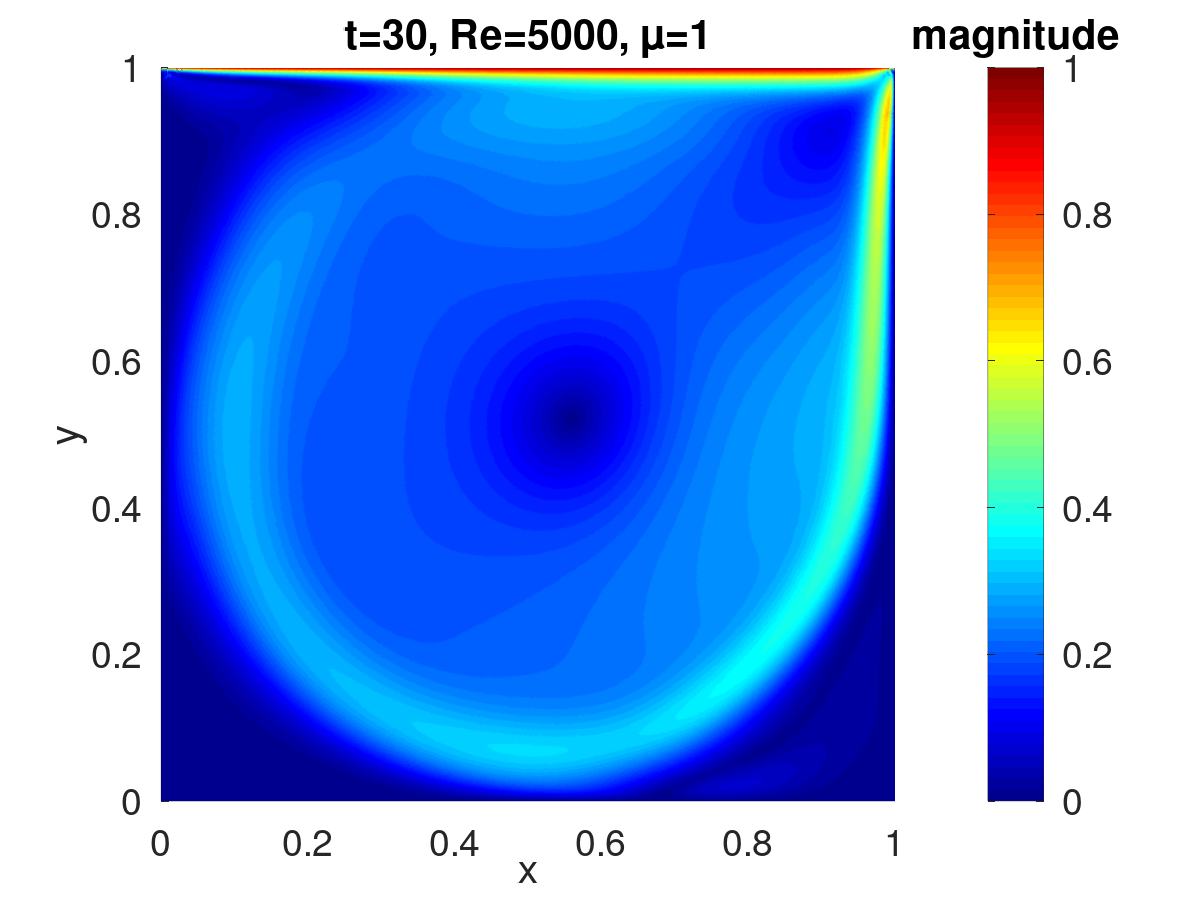}
\\ $(c)\ t=30$
\end{minipage}

\vspace{0.3cm}

\begin{minipage}{0.32\textwidth}
\centering
\includegraphics[width=\textwidth]{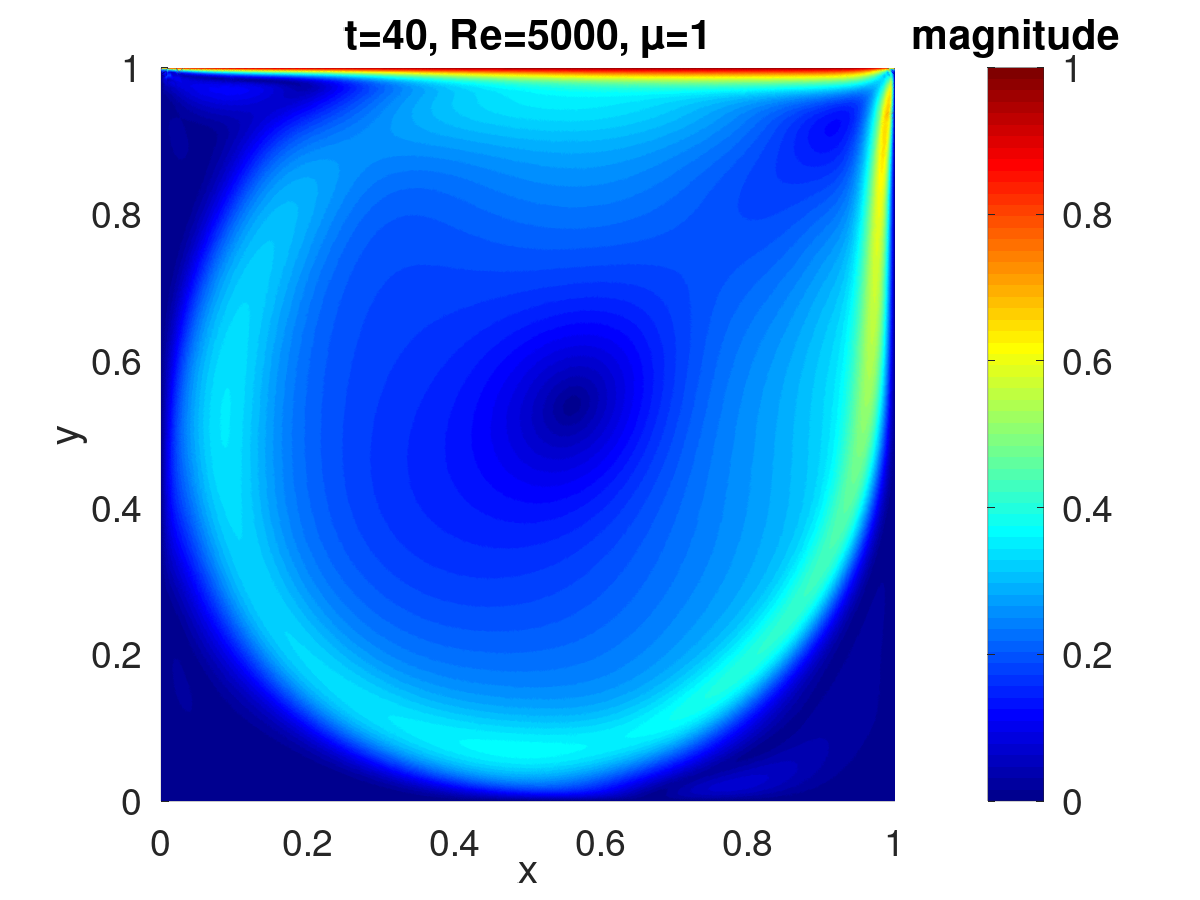}
\\ $(d)\ t=40$
\end{minipage}
\hfill
\begin{minipage}{0.32\textwidth}
\centering
\includegraphics[width=\textwidth]{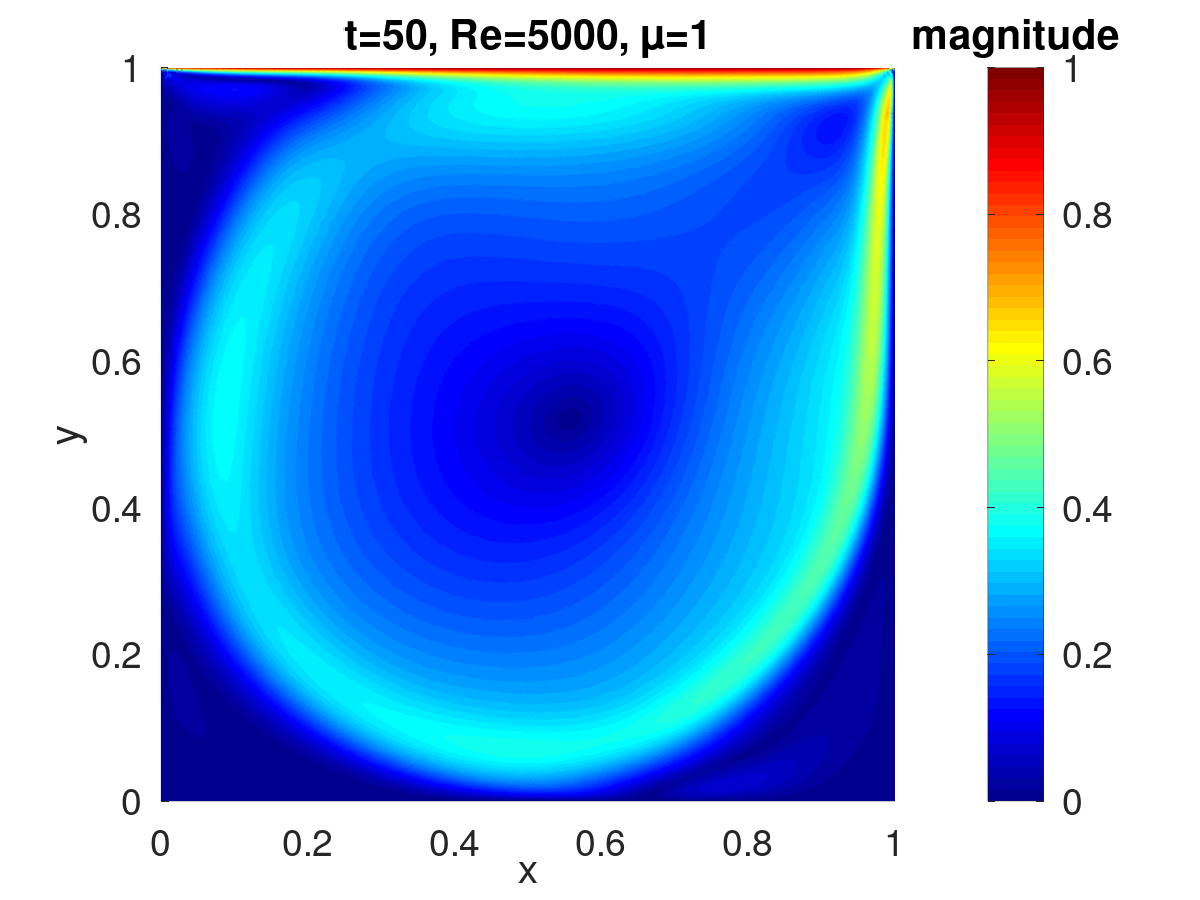}
\\ $(e)\ t=50$
\end{minipage}
\hfill
\begin{minipage}{0.32\textwidth}
\centering
\includegraphics[width=\textwidth]{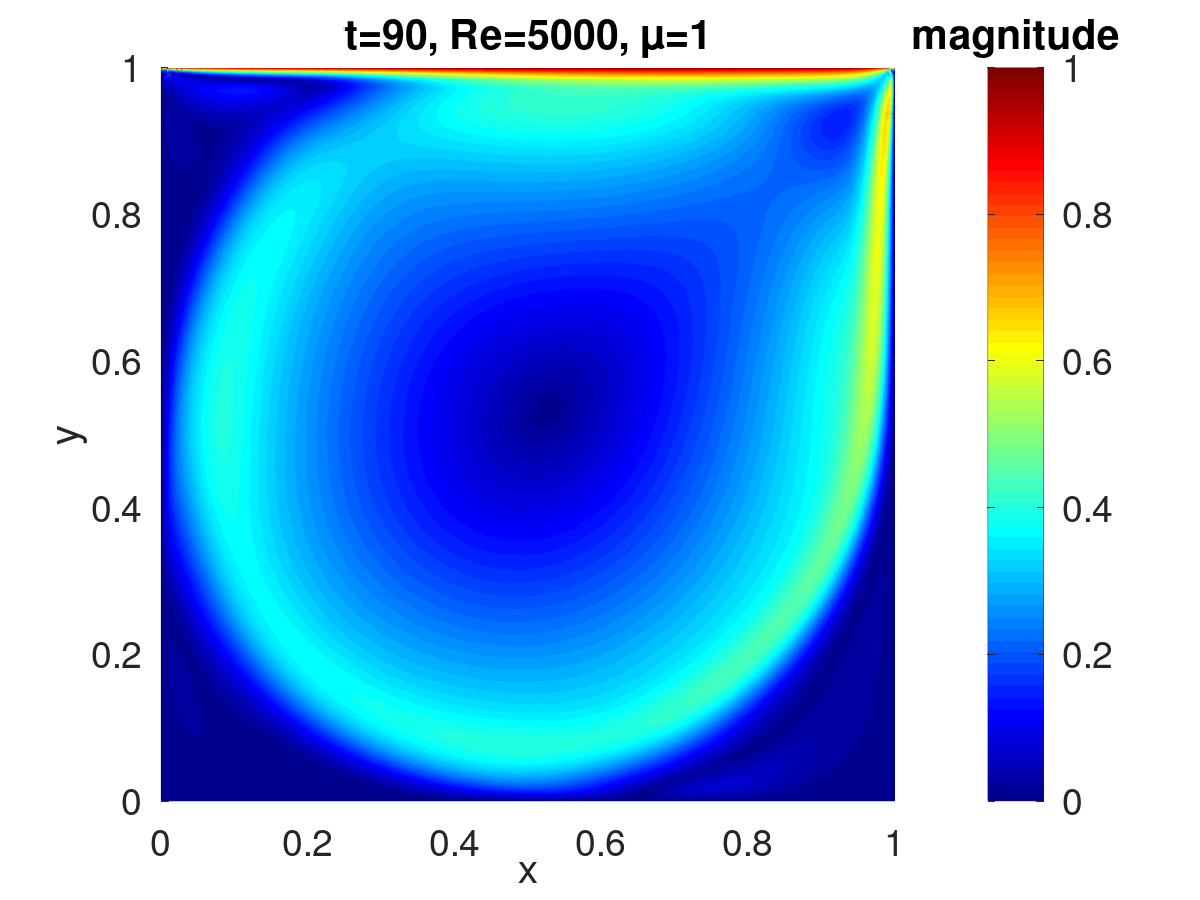}
\\ $(c)\ t=100$
\end{minipage}
\caption{Time evolution of the velocity magnitude for the lid-driven cavity flow.}
\label{lid:evol}
\end{figure}

\begin{figure}[htbp]
	\centering
	\includegraphics[width=0.35\textwidth]{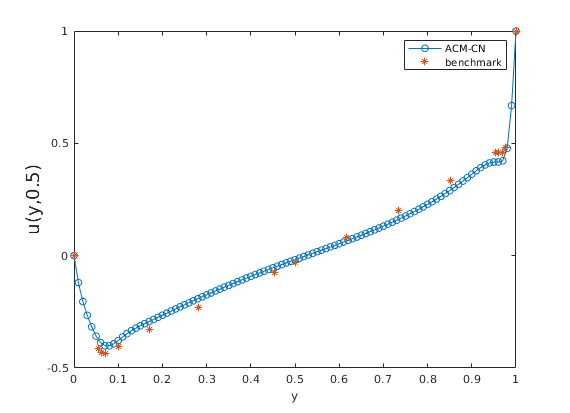}
	\includegraphics[width=0.35\textwidth]{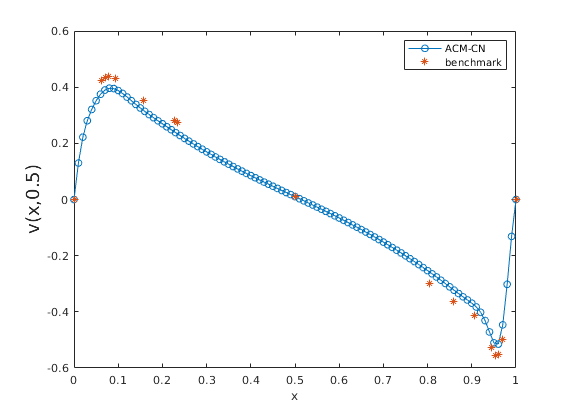} 
	\caption{The velocity at the center line with $x$-component velocity at $x = 0.5$ (left) and $y$-component velocity at $y = 0.5$ (right)  at $t=100$.}
	\label{fig-mag-cont:mid}
\end{figure}

\section{Conclusions}\label{conclusion}
In this paper, we have proposed and analyzed a second-order Crank-Nicolson fully discrete scheme based on the artificial compressibility method  for the incompressible Navier-Stokes equations. To address the instabilities arising from the artificial compressibility perturbation and high Reynolds numbers, we introduced the grad-div stabilization mechanism, coupled with the Scott-Vogelius finite element pair on Alfeld refined meshes. This specific combination not only facilitates the exact decoupling of velocity and pressure solves for computational efficiency but also lays the foundation for rigorous parameter-robust analysis.

The primary contribution of this work is the establishment of fully discrete optimal error estimates that are simultaneously robust with respect to both the viscosity $\varepsilon$ (inversely proportional to the Reynolds number) and the artificial compressibility parameter $\varepsilon$. Specifically, we have derived optimal convergence rates for the velocity in both the $L^2$ and $H^1$ norms, and for the pressure in the $L^2$ norm. A key feature of our analysis is the successful elimination of the $\varepsilon^{-1}$ dependency—which typically causes order reduction in standard ACM analyses—and the $\nu^{-1}$ dependency, which severely limits long-time accuracy. Furthermore, by circumventing the exponential dependence on the Reynolds number typically introduced by Grönwall’s lemma, our parameter-uniform bounds ensure the long-time accuracy and stability of the scheme.

The methodological and analytical frameworks developed in this work have several promising directions for future research. A natural and challenging extension is the application of the ACM strategy to fluid-coupled multiphysics models, such as thermally driven flows governed by the Boussinesq equations or incompressible magnetohydrodynamic (MHD) systems. These coupled systems inherently introduce additional physical parameters (e.g., thermal diffusivity, magnetic Reynolds number) and tighter coupling between multiple fields, which further complicate the numerical stability and error analysis. Future work will focus on designing efficient, decoupled ACM schemes for such multiphysics models and establishing corresponding fully discrete, parameter-robust optimal error estimates across all coupled physical variables.

\noindent {\bf Acknowledgements.} 
L. Ju's research was partially supported by U.S. National Science Foundation under grant numbers DMS-2109633 and DMS-2409634. R. Lan's work is partially supported by National Natural Science Foundation of China under grant number 12301531, Shandong Provincial Natural Science Fund for Excellent Young Scientists Fund Program (Overseas) under grant number 2023HWYQ-064, Shandong Provincial Youth Innovation Project under the grant number 2024KJN057 and  the OUC Scientific Research Program for Young Talented Professionals. 
	\bibliographystyle{abbrv}%{elsarticle-num-names}%{elsarticle-num}%{elsarticle-harv}%{plainnat}
	\bibliography{NSSAV}
	
\end{document}